\definecolor{myc}{cmyk}{0.0009,0.8,0.8,0.00}
\crefname{section}{section}{sections}
\crefname{subsection}{subsection}{subsections}
\Crefname{section}{Section}{Sections}
\Crefname{subsection}{Subsection}{Subsections}
\Crefname{figure}{Figure}{Figures}
 \numberwithin{equation}{section}
\newcommand{\N}{{\mathbb N}}
\newcommand{\A}{{\mathcal A}}
\newcommand{\R}{{\mathbb R}}
\newcommand{\h}{{\mathcal H}}
\newcommand{\Q}{{\mathbb Q}}
\newcommand{\C}{{\mathbb C}}
\newcommand{\E}{{\mathcal E}}
\newcommand{\K}{{\mathscr K}}
\newcommand{\M}{{\mathcal M}}
\newcommand{\G}{{\mathcal G}}
\newcommand{\dif}{\mathop{}\!\mathrm{d}}
\def\poscalr#1#2{\langle#1,#2\rangle}
\def\p{\partial}
\def\Op{\text{\rm Op}}
\def\id{{\rm Id}}
\newtheorem{Thm}{Theorem}[section]
\newtheorem{thm}[Thm]{Theorem}
\newtheorem{coro}[Thm]{Corollary}
\newtheorem{rem}[Thm]{Remark}
\newtheorem{lem}[Thm]{Lemma}
\newtheorem{prop}[Thm]{Proposition}
\newtheorem{defi}[Thm]{Definition}
\begin{document}
\author{Jingrui NIU}
\thanks{Universit\'e Paris-Saclay, Laboratoire de math\'ematiques d'Orsay, UMR 8628 du CNRS, B\^atiment 307, 91405 Orsay Cedex, France.
email: \texttt{jingrui.niu@universite-paris-saclay.fr}.}

\title[Simultaneous Control]{Simultaneous Control of Wave Systems}
\begin{abstract}
In this paper, we study the simultaneous controllability of wave systems with different speeds in an open domain of $\R^d$, $d\in\N^*$, and under a uniqueness assumption for eigenfunctions, we prove the exact controllability with a single control command. We then study this uniqueness assumption and both provide a counterexample (for which we hence only obtain a partial controllability result on a co-finite dimensional space) and examples to ensure the unique continuation property. For the case of constant coefficients and possibly multiple control functions, we prove the controllability property is equivalent to an appropriate Kalman rank condition.
\end{abstract}

\keywords{Wave equation, controllability, unique continuation property, coupled system}
\subjclass[2000]{93B05, 93B07, 35L05, 35L51}
\date{\today}
\maketitle
{\footnotesize
\baselineskip=0.72
\normalbaselineskip
\tableofcontents}

%%%%%%%%%%%%%%%%%%%%%%%%%%%%%%%%%%%%%%%%%%%%%%
%%%%%%%%  Introduction   %%%%%%%%%%%%%%%%%%%%%%%%%%%%%%
\section{Introduction}
Let $\Omega\subset\R^d$, $d\in\N^*$, be a bounded, and smooth domain. For positive constants $\alpha$ and $\beta$, let $k_{ij}(x):\Omega\rightarrow\R$, $1\leq i,j\leq d$ be smooth functions which satisfy:
\begin{equation}
    k_{ij}(x)=k_{ji}(x),\alpha|\xi|^2\leq\sum_{1\leq i,j\leq d}k_{ij}(x)\xi_i\xi_j\leq \beta|\xi|^2,\forall x\in\Omega,\forall \xi\in\R^d.
\end{equation}
Define $K(x)$ to be the symmetric positive definite matrix of coefficients $k_{ij}(x)$. Moreover, we define the density function $\kappa(x)=\frac{1}{\sqrt{\det(K(x))}}$. 
%with a density function $\kappa(x)$, 
We also define the Laplacian by $\Delta_K=\frac{1}{\kappa(x)}div(\kappa(x)K\nabla\cdot)$ on $\Omega$ and the d'Alembert operator $\Box_K=\p_t^2-\Delta_K$ on $\R_t\times\Omega$. 
%\begin{rem}
%In general, we define $\Delta_{K,\kappa}=\frac{1}{\kappa(x)}div(\kappa(x)K\nabla\cdot)$ on $\Omega$ with any smooth density function $\kappa(x)$.
%\end{rem}
We assume that $\omega$ is a nonempty open subset of $\Omega$. We consider the interior simultaneous controllability problem for the following wave system:
\begin{equation}\label{eq:control system of different metrics}
\left\{
  \begin{array}{l}
    \Box_{K_1}u_1=b_1f\mathbf{1}_{]0,T[}(t)\mathbf{1}_{\omega}(x)\text{ in }]0,T[\times\Omega,\\
    \Box_{K_2}u_2=b_2f\mathbf{1}_{]0,T[}(t)\mathbf{1}_{\omega}(x)\text{ in }]0,T[\times\Omega,\\
    \vdots\\
    \Box_{K_n}u_n=b_nf\mathbf{1}_{]0,T[}(t)\mathbf{1}_{\omega}(x)\text{ in }]0,T[\times\Omega,\\
    u_j=0\quad\text{ on }]0,T[\times\p\Omega,1\leq j\leq n,\\
    u_j(0,x)=u_j^0(x),\quad \p_tu_j(0,x)=u_j^1(x), 1\leq j\leq n.
  \end{array}
\right.
\end{equation}
Here, we choose $K_i$($1\leq i\leq n$) to be $n$ different symmetric positive definite matrices. The state of the system is $(u_1,\p_t u_1,\cdots,u_n,\p_t u_n)$ and $f$ is our control function. $b_i$ are $n$ nonzero constant coefficients. In this paper, we mainly consider the exact controllability for the system \cref{eq:control system of different metrics} given by the following definition. 
\begin{defi}[Exact Controllability]
We say that the system \cref{eq:control system of different metrics} is exactly controllable if for any initial data $(u_1^0,u_1^1,\cdots,u_n^0,u_n^1)\in (H^1_0(\Omega)\times L^2(\Omega))^n$ and any target data $(U_1^0,U_1^1,\cdots,U_n^0,U_n^1)\in (H^1_0(\Omega)\times L^2(\Omega))^n$, there exists a control function $f\in L^2(]0,T[\times\omega)$ such that the solution of the system \cref{eq:control system of different metrics} with initial data $(u_1,\p_t u_1,\cdots,u_n,\p_t u_n)|_{t=0}=(u_1^0,\cdots,u_n^1)$ satisfies $(u_1,\p_t u_1,\cdots,u_n,\p_t u_n)|_{t=T}\\=(U_1^0,\cdots,U_n^1)$.
\end{defi}
Moreover, we also consider the partial exact controllability for the system \cref{eq:control system of different metrics} given by the following definition.
\begin{defi}
Let $\Pi$ be a projection operator of $(H^1_0(\Omega)\times L^2(\Omega))^n$.
We say that the system \cref{eq:control system of different metrics} is $\Pi-$exactly controllable if for any initial data $(u_1^0,u_1^1,\cdots,u_n^0,u_n^1)\in (H^1_0(\Omega)\times L^2(\Omega))^n$ and any target data $(U_1^0,U_1^1,\cdots,U_n^0,U_n^1)\in (H^1_0(\Omega)\times L^2(\Omega))^n$, there exists a control function $f\in L^2(]0,T[\times\omega)$ such that the solution of \cref{eq:control system of different metrics} with initial data $(u_1,\p_t u_1,\cdots,u_n,\p_t u_n)|_{t=0}=(u_1^0,u_1^1,\cdots,u_n^0,u_n^1)$ satisfies
\begin{equation*}
    \Pi(u_1,\p_t u_1,\cdots,u_n,\p_t u_n)|_{t=T}=\Pi(U_1^0,U_1^1,\cdots,U_n^0,U_n^1).
\end{equation*} 
If we only impose that $\Pi(u_1,\p_t u_1,\cdots,u_n,\p_t u_n)|_{t=T}=0$, we say that the system \cref{eq:control system of different metrics} is $\Pi-$null controllable.
\end{defi}
\begin{prop}\label{thm:equivalence thm}
For the system \cref{eq:control system of different metrics}, the $\Pi-$null controllability is equivalent to the $\Pi-$exact controllability. 
\end{prop}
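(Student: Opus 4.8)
The plan is to use nothing but the linearity of \cref{eq:control system of different metrics} together with the fact that each wave operator $\Box_{K_j}$ is \emph{time-reversible}, i.e.\ generates a $C_0$-group on the energy space $H^1_0(\Omega)\times L^2(\Omega)$. One implication is immediate: if the system is $\Pi$-exactly controllable, then taking the target data to be $(U_1^0,\dots,U_n^1)=0$ shows it is $\Pi$-null controllable. So the content is the converse.

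Assume $\Pi$-null controllability and fix initial data $(u_1^0,\dots,u_n^1)$ and a target $(U_1^0,\dots,U_n^1)$ in $(H^1_0(\Omega)\times L^2(\Omega))^n$. First I would solve the uncoupled, source-free system \emph{backwards} in time: for each $j$ let $v_j$ solve $\Box_{K_j}v_j=0$ on $]0,T[\times\Omega$ with $v_j=0$ on $]0,T[\times\p\Omega$ and $(v_j,\p_t v_j)|_{t=T}=(U_j^0,U_j^1)$. By well-posedness of the wave equation with these coefficients, this backward Cauchy problem has a unique solution, so $(\tilde u_j^0,\tilde u_j^1):=(v_j,\p_t v_j)|_{t=0}\in H^1_0(\Omega)\times L^2(\Omega)$ is well defined, and by construction the free solution $v_j$ reaches $(U_j^0,U_j^1)$ at time $T$ with zero control. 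Next I apply the $\Pi$-null controllability hypothesis to the shifted initial data $(u_1^0-\tilde u_1^0,u_1^1-\tilde u_1^1,\dots,u_n^0-\tilde u_n^0,u_n^1-\tilde u_n^1)$: there is $f\in L^2(]0,T[\times\omega)$ such that the solution, which I denote $(w_1,\p_t w_1,\dots,w_n,\p_t w_n)$, of \cref{eq:control system of different metrics} with this data and control $f$ satisfies $\Pi(w_1,\p_t w_1,\dots,w_n,\p_t w_n)|_{t=T}=0$. By linearity of \cref{eq:control system of different metrics} the functions $u_j:=w_j+v_j$ solve the system with the original data $(u_1^0,\dots,u_n^1)$ and the \emph{same} control $f$ (adding the source-free $v_j$ changes neither the right-hand side nor the homogeneous boundary condition), so by uniqueness this is the controlled solution; evaluating at $t=T$ and using linearity of $\Pi$,
\begin{equation*}
\Pi(u_1,\p_t u_1,\dots,u_n,\p_t u_n)|_{t=T}=\Pi(w_1,\p_t w_1,\dots,w_n,\p_t w_n)|_{t=T}+\Pi(v_1,\p_t v_1,\dots,v_n,\p_t v_n)|_{t=T}=\Pi(U_1^0,U_1^1,\dots,U_n^0,U_n^1),
\end{equation*}
which is exactly $\Pi$-exact controllability.

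There is essentially no obstacle here: this is the standard affine-algebra argument for linear control systems, and the only point that genuinely deserves to be spelled out is the time-reversibility of the free flow, which is what makes the backward construction of $(\tilde u_j^0,\tilde u_j^1)$ legitimate in the energy space. Phrased operator-theoretically, if $S(T)$ denotes the (invertible) group of the uncoupled system on $(H^1_0(\Omega)\times L^2(\Omega))^n$ and $L_Tf$ the state at time $T$ produced from zero initial data by the control $f$, then both notions amount to surjectivity of $\Pi L_T$ onto $\Pi\,(H^1_0(\Omega)\times L^2(\Omega))^n$, using that $S(T)$ is onto; I would include this remark as an alternative one-line proof.
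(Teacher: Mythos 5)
Your proof is correct and follows essentially the same route as the paper: the paper also applies $\Pi$-null controllability to the shifted data $y^0-S(-T)y^1$ (your backward free solution of the target) and adds back the free evolution by linearity, merely writing the argument in semigroup notation for the first-order system $\p_t y=-\mathscr{A}y+\Tilde{B}f\mathbf{1}_{]0,T[}\mathbf{1}_{\omega}$. Your explicit remark on time-reversibility of the free group is exactly the point implicit in the paper's use of $S(-T)$, so there is nothing to add.
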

\begin{proof}
We follow closely the proof of  \cite[Theorem 2.41]{coron2007control}. It is clear that ($\Pi-$exact controllability) $\Longrightarrow$ ($\Pi-$null controllability). So we focus on the proof of the converse. We define the operator
\begin{equation}
    \mathscr{A}=\left(
            \begin{array}{cccc}
              0 & -1 & \cdots& 0 \\
              -\Delta_{K_1}  & 0 & \cdots & 0 \\
              \vdots & \vdots & 0 & -1 \\
              0 & 0 &  -\Delta_{K_n} & 0 \\
            \end{array}
          \right)  .
\end{equation}
the system \cref{eq:control system of different metrics} is equivalent to 
\begin{equation}\label{eq:ODE-system}
    \p_t y=-\mathscr{A}y+\Tilde{B}f\mathbf{1}_{]0,T[}(t)\mathbf{1}_{\omega}(x), y|_{t=0}=y(0),
\end{equation}
where 
\begin{equation*}
    y=\left(
\begin{array}{c}
     u_1 \\
     \p_t u_1\\
     \vdots\\
     u_n\\
     \p_t u_n
\end{array}
\right),\quad 
y(0)=\left(
\begin{array}{c}
     u^0_1 \\
     u^1_1\\
     \vdots\\
     u^0_n\\
     u^1_n
\end{array}
\right)\quad
\text{and }
\Tilde{B}=\left(
\begin{array}{c}
     0 \\
     b_1\\
     \vdots\\
     0\\
     b_n
\end{array}
\right).
\end{equation*}
Let us consider $S(t)$ the semi-group generated by $\mathscr{A}$. Let $y^0\in (H^1_0(\Omega)\times L^2(\Omega))^n$ and $y^1\in(H^1_0(\Omega)\times L^2(\Omega))^n$. Since the system \cref{eq:control system of different metrics} is $\Pi-$null controllable, we obtain that there exists $f$ such that the solution $\Tilde{y}$ of the Cauchy problem 
\begin{equation}
    \p_t\Tilde{y}=-\mathscr{A}\Tilde{y}+\Tilde{B}f\mathbf{1}_{]0,T[}(t)\mathbf{1}_{\omega}(x), y|_{t=0}=y^0-S(-T)y^1
\end{equation}
satisfies $\Pi\Tilde{y}(T)=0$. For the Cauchy problem
\begin{equation}
    \p_t y=-\mathscr{A}y+\Tilde{B}f\mathbf{1}_{]0,T[}(t)\mathbf{1}_{\omega}(x), y|_{t=0}=y^0,
\end{equation}
the solution $y$ is given by
\begin{equation}
    y(t)=\Tilde{y}(t)+S(t-T)y^1, \forall t\in [0,T].
\end{equation}
Hence, we obtain that $y(T)=\Tilde{y}(T)+y^1$. In particular, we know that $\Pi y(T)=\Pi y^1$ since $\Pi\Tilde{y}(T)=0$. We now obtain the $\Pi-$exact controllability for the system \cref{eq:control system of different metrics}. 
\end{proof}

According to the Hilbert Uniqueness Method of J.-L. Lions \cite{HUM}, the controllability property is equivalent to an observability inequality for the adjoint system. In particular, when we focus on our system \cref{eq:control system of different metrics}, the exact controllability is equivalent to proving the following observability inequality: $\exists C>0$ such that for any solution of the adjoint system:
\begin{equation}\label{eq:adjoint system of different metrics}
\left\{
  \begin{array}{l}
    \Box_{K_1}v_1=0\text{ in }]0,T[\times\Omega,\\
    \Box_{K_2}v_2=0\text{ in }]0,T[\times\Omega,\\
    \vdots\\
    \Box_{K_n}v_n=0\text{ in }]0,T[\times\Omega,\\
    v_j=0\quad\text{ on }]0,T[\times\p\Omega,1\leq j\leq n,\\
    v_j(0,x)=v_j^0(x),\quad \p_t v_j(0,x)=v_j^1(x), 1\leq j\leq n,
  \end{array}
\right.
\end{equation}
we have
\begin{equation}\label{eq:general ob}
C\int_0^T\int_{\omega}{|b_1\kappa_1v_1+\cdots+b_n\kappa_nv_n|}^2dxdt\geq \sum_{i=1}^n(||v_i^0||^2_{L^2}+||v^1_i||^2_{H^{-1}}).
\end{equation}
For the partial controllability, we have a similar result. The $\Pi-$exact controllability of the system \cref{eq:control system of different metrics} is equivalent to proving the following observability inequality: $\exists C>0$ such that for any solution of the adjoint system:
\begin{equation}\label{eq:P-adjoint system of different metrics}
\left\{
  \begin{array}{l}
    \Box_{K_1}v_1=0\text{ in }]0,T[\times\Omega,\\
    \Box_{K_2}v_2=0\text{ in }]0,T[\times\Omega,\\
    \vdots\\
    \Box_{K_n}v_n=0\text{ in }]0,T[\times\Omega,\\
    v_j=0\quad\text{ on }]0,T[\times\p\Omega,1\leq j\leq n,\\
    (v_1(0,x),\p_t v_1(0,x),\cdots,v_n(0,x) \p_t v_n(0,x))=\Pi^* V^0, 
  \end{array}
\right.
\end{equation}
where $V^0\in (L^2\times H^{-1})^n$ and $\Pi^*$ is the adjoint operator of the projector $\Pi$, we have 
\begin{equation}\label{eq:P-general ob}
C\int_0^T\int_{\omega}{|b_1\kappa_1v_1+\cdots+b_n\kappa_nv_n|}^2dxdt\geq ||\Pi^* V^0||^2_{(L^2\times H^{-1})^n}. 
\end{equation}
This is an easy consequence of \cref{thm:equivalence thm}, the conservation of energy for system \cref{eq:control system of different metrics} and \cite[Chapter 4, Proposition 2.1]{khodja2017partial}.

In order to study the observability inequality, a classical method is to  follow the abstract three-step process initialized by Rauch and Taylor \cite{3-step}(see also \cite{GCC}). It can be detailed as follows: 
\begin{itemize}
    \item Firstly, get the microlocal information on the observable region. Argue by contradiction to obtain different kinds of convergence in subdomain $]0,T[\times\omega$ and the whole domain $]0,T[\times\Omega$.
    \item Secondly, use microlocal defect measure (which is due to G\'erard \cite{defect_measure} and Tartar \cite{H-measure}), or propagation of singulaties theorem (see \cite{hormander1985analysis} Section 18.1) to prove a weak observability estimate:
    \begin{equation*}
\begin{aligned}
    \sum_{i=1}^n(||v_i^0||^2_{L^2}+||v^1_i||^2_{H^{-1}})&\\\leq C(\int_0^T\int_{\omega}|\sum_{j=1}^n b_j\kappa_jv_j|^2dxdt
    &+\sum_{i=1}^n(||v_i^0||^2_{H^{-1}}+||v^1_i||^2_{H^{-2}})).
\end{aligned}
\end{equation*}
    \item Thirdly, use unique continuation properties of eigenfunctions to obtain the original observability inequality \cref{eq:general ob}.
\end{itemize}
  For the high frequency estimates, a very natural condition is to assume that the control set satisfies the Geometric Control Condition(GCC).
\begin{defi}
For $\omega\subset \Omega$ and $T>0$, we shall say that the pair $(\omega, T,p_K)$ satisfies GCC if every general bicharacteristic of $p_K$ meets $\omega$ in a time $t<T$, where $p_K$ is the principal symbol of $\Box_K$.
\end{defi}
We will give the definition of bicharacteristics in \Cref{sec:Geometric Preliminaries}. This condition was raised by Bardos, Lebeau, and Rauch \cite{bardos1988controle} when they considered the controllability of a scalar wave equation and has now become a basic assumption for the controllability of wave equations. In \cite{burq1997condition}, the authors show that the geometric control condition is a necessary and sufficient condition for the exact controllability of the wave equation with Dirichlet boundary conditions and continuous boundary control functions. In order to study the low frequencies, we need to introduce the notion of unique continuation of eigenfunctions.
\begin{defi}
We say the system \cref{eq:control system of different metrics} satisfies the unique continuation of eigenfunctions if the following property holds: $\forall \lambda\in\C$, the only solution $(\phi_1,\cdots, \phi_n)\in (H^1_0(\Omega))^n$ of
\begin{equation*}
\left\{
  \begin{array}{l}
    -\Delta_{K_1}\phi_1=\lambda^2\phi_1\text{ in }\Omega,\\
    -\Delta_{K_2}\phi_2=\lambda^2\phi_2\text{ in }\Omega,\\
\cdots\\
 -\Delta_{K_n}\phi_n=\lambda^2\phi_n\text{ in }\Omega,\\
    b_1\kappa_1\phi_1+\cdots+b_n\kappa_n\phi_n=0\text{ in } \omega,\\
  \end{array}
\right.
\end{equation*}
is the zero solution $(\phi_1,\cdots, \phi_n)\equiv0$.
\end{defi}
There is a large literature on the controllability and observability of the wave equations. Several techniques have been applied to derive observability inequalities in various situations. This paper is mainly devoted to multi-speed wave systems coupled by the control functions only. For other interesting situations, we list some of the existing results and references:
\begin{itemize}
    \item For single wave equation, it is by now well-known that Bardos, Lebeau, and Rauch \cite{GCC} use microlocal analysis to prove the \cref{eq:general ob}-type observability inequality for a scalar wave equation. Other approaches for proving it can also be found in the literature, for example, using multipliers \cite{lions1988controlabilite,lagnese1983control}, using Carleman estimates \cite{haraux1991completion,baudouin2013global}, or completely constructive proof \cite{laurent2016uniform}, etc. 
    \item Although we now have a better picture on the controllabilty of a single wave equation, the controllability of systems of wave equations is still not totally understood. To our knowledge, most of the references concern the case of systems with the same principal symbol. Alabau-Boussouira and L\'eautaud \cite{2-coupled-wave} studied the indirect controllability of two coupled wave equations, in which their controllability result was established using a multi-level energy method introduced in \cite{alabau2003two}, and also used in \cite{alabau2013hierarchic,MR3163486}. Liard and Lissy \cite{liard2017kalman}, Lissy and Zuazua \cite{lissy2019internal} studied the observability and controllability of the coupled wave systems under the Kalman type rank condition. Moreover, we can find other controllability results for coupled wave systems, for example, Cui, Laurent, and Wang \cite{cui2018observability} studied the observability of wave equations coupled by first or zero order terms on a compact manifold. The microlocal defect measure when dealing with the single wave equation can also be extended to a system case. One can refer to Burq and Lebeau for the microlocal defect measure for systems \cite{measure-for-system}. 
    \item As for multi-speed case, Dehman, Le Roussau, and L\'eautaud considered two coupled wave equations with multi-speeds in \cite{2-multi-speed}. More related work is given by Tebou \cite{Tebou}, in which the author considered the simultaneous controllability of constant multi-speed wave system and derived some result in a semilinear setting in \cite{tebou-semilinear}.
\end{itemize}

% The outline is not required, but we show an example here.
\subsection{Plan of the paper}
The paper is organized as follows. Our main results are in
\Cref{sec:main} and \Cref{sec:Geometric Preliminaries} is devoted to introducing some geometric preliminaries. We include the descriptions of the boundary points, and give the precise definition of general bicharacteristics and the order of tangential contact with the boundary. 

In \Cref{sec:High Frequency Estimates}, we focus on the high frequency estimates. \Cref{sec:Microlocal defect measure} is devoted to introducing the microlocal defect measure and its basic properties, which is also the main tool for our proof. \Cref{sec:Proof of  thm:except finte dim} deals with the partial controllability,  and \Cref{sec:The proof of thm:thm of varying metric} is aimed to recover the exact controllability result in the whole energy space of initial conditions with the help of the unique continuation properties of eigenfunctions. In these two sections, we prove the \cref{thm:except finte dim}, and \cref{thm:thm of varying metric} respectively.

In \Cref{sec:Unique continuation of eigenfunctions}, we plan to deal with low frequency estimates, mainly discussing about the unique continuation properties of eigenfunctions. \Cref{sec:A counterexample} provides a counterexample to show that only assuming the hypotheses in \cref{thm:except finte dim} cannot ensure the unique continuation properties of eigenfunctions. Then, we add some stronger assumptions to obtain the unique continuation property. The first attempt is to require an analyticity condition, which is the example in \cref{thm:prop 2-eq}. The other attempt is to require constant coefficients in \Cref{sec:Constant Coefficient Case} and \Cref{sec:Proof of thm:constant result}, which is stated in \cref{thm:constant result}. \Cref{sec:Two Generic Properties} is about generic properties of metrics which ensure the unique continuation in dimension $1$ and $2$.

In \Cref{sec:Constant Coefficient Case with Multiple Control Functions}, we deal with the constant coefficient case with multiple control functions. We also discuss the corresponding Kalman rank condition in this setting.

In \Cref{appendix}, we include the proof of the equivalent condition of the Kalman rank condition in the case of multiple control functions.
\subsection{Ideas of the proof}  In our paper, we prove the controllability result by applying the Hilbert uniqueness method to prove the observability inequality of the adjoint system. In order to study the observability inequality, we always use an argument by contradiction. First, we try to prove a weak observability inequality by adding some low frequency part. To obtain the original observability inequality, we need to analyse the invisible solutions in the subdomain $\omega\times ]0,T[$ by proving the unique continuation properties of eigenfunctions. In section 4, we discuss some generic properties. We follow the ideas given by Uhlenbeck \cite{uhlenbeck1976generic}, using the transversality theorem to obtain generic properties.  

\section{Main results}
\label{sec:main}

In this paper, we mainly study the exact controllability for the system \cref{eq:control system of different metrics} and discuss the optimality of the given conditions. On the other hand, when we consider the constant coefficient case, we associate the controllability with the Kalman rank condition. Instead of considering the exact controllability, we can only consider the high frequency estimates to obtain a partial result. One can also see similar finite codimensional controllability results, for instance,  in \cite{cui2018observability} and \cite{codimensional}.
\begin{thm}\label{thm:except finte dim}
Given $T>0$, suppose that:
\begin{enumerate}
  \item $(\omega,T,p_{K_i})$ satisfies GCC, $i=1,2,\cdots,n$,
  \item\label{A3}$K_1>K_2>\cdots>K_n$ in $\omega$,
  \item $\Omega$ has no infinite order of tangential contact on the boundary.
\end{enumerate}
Then, there exists a finite dimensional subspace $E\subset (H^1_0(\Omega)\times L^2(\Omega))^n$ such that the system \cref{eq:control system of different metrics} is $\mathbb{P}-$exactly controllable, where $\mathbb{P}$ is the orthogonal projector on $E^\perp$.
\end{thm}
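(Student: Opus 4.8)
The plan is to combine the Hilbert Uniqueness Method with the three-step strategy recalled above; since we do \emph{not} assume unique continuation of eigenfunctions, the last step is replaced by a compactness--uniqueness argument that only sacrifices a finite-dimensional space. By \cref{thm:equivalence thm} and the equivalence between $\mathbb P$-exact controllability and the observability inequality \cref{eq:P-general ob}, it suffices to produce a finite-codimensional subspace on which the clean observability estimate $\|V^0\|_{(L^2\times H^{-1})^n}^2\le C\int_0^T\!\int_\omega|\sum_j b_j\kappa_j v_j|^2\,\dif x\,\dif t$ holds. I first establish the \emph{weak} observability inequality
\[
\sum_{i=1}^n\left(\|v_i^0\|_{L^2}^2+\|v_i^1\|_{H^{-1}}^2\right)\leq C\left(\int_0^T\!\!\int_\omega\Bigl|\sum_{j=1}^n b_j\kappa_j v_j\Bigr|^2\dif x\,\dif t+\sum_{i=1}^n\left(\|v_i^0\|_{H^{-1}}^2+\|v_i^1\|_{H^{-2}}^2\right)\right)
\]
for every solution $(v_1,\dots,v_n)$ of \cref{eq:adjoint system of different metrics}. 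Arguing by contradiction, take solutions with $\sum_i(\|v_i^{(k),0}\|_{L^2}^2+\|v_i^{(k),1}\|_{H^{-1}}^2)=1$ while $\int_0^T\!\int_\omega|\sum_j b_j\kappa_j v_j^{(k)}|^2\,\dif x\,\dif t\to0$ and $\sum_i(\|v_i^{(k),0}\|_{H^{-1}}^2+\|v_i^{(k),1}\|_{H^{-2}}^2)\to0$. Then $v_i^{(k)}\rightharpoonup0$ in $C([0,T];L^2(\Omega))$, and I attach to the vector sequence $(v_1^{(k)},\dots,v_n^{(k)})$ a Hermitian nonnegative matrix-valued microlocal defect measure $\mathbb M=(\mathbb M_{ij})$ (the scalar theory of \Cref{sec:Microlocal defect measure} extended to systems), whose diagonal entry $\mu_i:=\mathbb M_{ii}$ is the microlocal defect measure of $v_i^{(k)}$ and is carried by $\mathrm{Char}(\Box_{K_i})=\{-\tau^2+\langle K_i(x)\xi,\xi\rangle=0\}$.

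The structural heart of the proof is that these characteristic sets are \emph{pairwise disjoint over} $]0,T[\times\omega$: if $-\tau^2+\langle K_i(x)\xi,\xi\rangle=-\tau^2+\langle K_j(x)\xi,\xi\rangle=0$ at a point of $]0,T[\times\omega$ with $(\tau,\xi)\neq0$ and $i\neq j$, then $\langle(K_i-K_j)(x)\xi,\xi\rangle=0$, which by the strict ordering \eqref{A3} forces $\xi=0$ and hence $\tau=0$, a contradiction. Combined with the Cauchy--Schwarz inequality $|\mathbb M_{ij}|\le(\mu_i\mu_j)^{1/2}$ valid for nonnegative matrix measures, this shows $\mathbb M_{ij}=0$ over $]0,T[\times\omega$ whenever $i\neq j$. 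Reading the convergence $\sum_j b_j\kappa_j v_j^{(k)}\to0$ in $L^2(]0,T[\times\omega)$ through $\mathbb M$ yields $\sum_{i,j}b_ib_j\kappa_i\kappa_j\,\mathbb M_{ij}=0$ over $]0,T[\times\omega$, hence $\sum_i(b_i\kappa_i)^2\mu_i=0$ there; since each $b_i\neq0$ and $\kappa_i>0$, every $\mu_i$ vanishes over $]0,T[\times\omega$. This is exactly where the single control command manages to "see" each component separately.

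Now each $\mu_i$ is invariant under the general bicharacteristic flow of $p_{K_i}$, propagated up to and along $\p\Omega$: here hypothesis~(1) (GCC for $p_{K_i}$) and hypothesis~(3) (absence of infinite-order tangential contact, so that Melrose--Sj\"ostrand propagation applies) enter. Since every general bicharacteristic of $p_{K_i}$ meets $]0,T[\times\omega$, we get $\mu_i\equiv0$, so $v_i^{(k)}\to0$ in $L^2(]0,T[\times\Omega)$; the same reasoning applied to $\p_t v_i^{(k)}$, which also solves $\Box_{K_i}(\p_t v_i^{(k)})=0$, gives $\p_t v_i^{(k)}\to0$ in $L^2(]0,T[;H^{-1}(\Omega))$. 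Choosing $t_k\in(0,T)$ with $\|v_i^{(k)}(t_k)\|_{L^2}^2+\|\p_t v_i^{(k)}(t_k)\|_{H^{-1}}^2$ below its time-average and invoking conservation of energy for $\Box_{K_i}$, we obtain $\sum_i(\|v_i^{(k),0}\|_{L^2}^2+\|v_i^{(k),1}\|_{H^{-1}}^2)\to0$, contradicting the normalization; the weak observability inequality is proved. To remove the compact remainder, let $\mathcal N\subset(L^2\times H^{-1})^n$ be the closed subspace of Cauchy data of solutions of \cref{eq:adjoint system of different metrics} with $\sum_j b_j\kappa_j v_j\equiv0$ in $]0,T[\times\omega$. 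Because $(L^2\times H^{-1})^n\hookrightarrow(H^{-1}\times H^{-2})^n$ is compact (Rellich), the weak inequality restricted to $\mathcal N$ forces $\dim\mathcal N<\infty$; running the usual compactness--uniqueness argument on $\mathcal N^\perp$ (a normalized contradicting sequence would converge in the weaker norm to a nonzero element of $\mathcal N\cap\mathcal N^\perp=\{0\}$) absorbs the lower-order term and gives $\|V^0\|_{(L^2\times H^{-1})^n}^2\le C\int_0^T\!\int_\omega|\sum_j b_j\kappa_j v_j|^2\,\dif x\,\dif t$ for all $V^0\in\mathcal N^\perp$. By \cref{eq:P-general ob} this is precisely the $\mathbb P$-exact controllability of \cref{eq:control system of different metrics}, with $E\subset(H^1_0\times L^2)^n$ the finite-dimensional subspace associated to $\mathcal N$ under the natural duality between $(H^1_0\times L^2)^n$ and $(L^2\times H^{-1})^n$ and $\mathbb P$ the orthogonal projector onto $E^\perp$; \cref{thm:equivalence thm} upgrades the resulting null controllability to exact controllability.

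The main obstacle is the microlocal argument of the second and third paragraphs: constructing and controlling the matrix-valued microlocal defect measure for the coupled system, exploiting the strict ordering \eqref{A3} to disentangle the characteristic varieties of the $\Box_{K_i}$ over $\omega$, and propagating the resulting measures cleanly up to and through the boundary under hypothesis~(3). The final reduction to finite codimension is routine functional analysis.
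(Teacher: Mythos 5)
Your proposal is correct and follows essentially the same route as the paper: a weak observability inequality proved by contradiction with microlocal defect measures, where the strict ordering $K_1>\cdots>K_n$ over $\omega$ makes the characteristic varieties disjoint there so the measures decouple and each $\mu_i$ vanishes on $]0,T[\times\omega$, then propagation under GCC and the no-infinite-order-contact hypothesis kills each $\mu_i$, and finally a compactness--uniqueness step identifies a finite-dimensional invisible space and yields the clean observability on its complement. The only differences are implementation details: you decouple the components via the matrix-valued (Burq--Lebeau) measure and Cauchy--Schwarz, while the paper proves the vanishing of the cross terms directly with scalar measures and pseudodifferential cutoffs, handles $\p_t v_i^k$ by an explicit multiplier identity rather than by re-running the measure argument at the lower Sobolev level, and, instead of defining $E$ from $\mathcal{N}$ by duality, shows by a semigroup regularization that the invisible space itself lies in $D(\mathscr{A})\subset (H^1_0(\Omega)\times L^2(\Omega))^n$ and takes $E=\mathcal{N}(T)$.
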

We will explain the concept of the order of contact in the \cref{sec:Geometric Preliminaries}.
\begin{rem}
We say that $K_1>K_2$ in $\omega$ if and only if $\forall x\in\omega$, $\forall\xi\in\R^d$ and $\xi\neq0$, $(\xi,K_1(x)\xi)>(\xi,K_2(x)\xi)$, where $(\cdot,\cdot)$ denotes the inner product of $\R^d$.
\end{rem}
\begin{rem}
The Assumption \eqref{A3} can be generalized as follows: let $\sigma$ be a permutation of $\{1,2,\cdots,n\}$, $K_{\sigma(1)}>K_{\sigma(2)}>\cdots>K_{\sigma(n)}$ in $\omega$.
\end{rem}
\begin{rem}
The same result holds for the laplacian operator
\begin{equation*}
    \Delta_{K,\kappa}=\frac{1}{\kappa(x)}div(\kappa(x)K(x)\nabla\cdot),
\end{equation*}
where we only assume that  $\kappa\in C^{\infty}(\Omega)$ without the restriction $\kappa(x)=\frac{1}{\sqrt{\det(K(x))}}$.
\end{rem}
To obtain the exact controllability, we need more assumptions on the low frequency part.
\begin{thm}\label{thm:thm of varying metric}
Given $T>0$, suppose that:
\begin{enumerate}
  \item $(\omega,T,p_{K_i})$ satisfies GCC, $i=1,2,\cdots,n$,
  \item \label{hyp2} $K_1>K_2>\cdots>K_n$ in $\omega$,
  \item $\Omega$ has no infinite order of tangential contact on the boundary,
  \item The system \cref{eq:control system of different metrics} satisfies the unique continuation property of eigenfunctions.
\end{enumerate}
Then the system \cref{eq:control system of different metrics} is exactly controllable in $(H^1_0(\Omega)\times L^2(\Omega))^n$.
\end{thm}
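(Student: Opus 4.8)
The plan is to prove the observability inequality \cref{eq:general ob}, which by the Hilbert Uniqueness Method is equivalent to exact controllability. The high frequency analysis underlying \cref{thm:except finte dim} already yields, for every solution of \cref{eq:adjoint system of different metrics}, the weak observability inequality
\[
\sum_{i=1}^{n}\Big(\|v_i^0\|_{L^2}^2+\|v_i^1\|_{H^{-1}}^2\Big)\le C\Big(\int_0^T\!\!\int_\omega\Big|\sum_{j=1}^{n}b_j\kappa_j v_j\Big|^2\,dx\,dt+\sum_{i=1}^{n}\big(\|v_i^0\|_{H^{-1}}^2+\|v_i^1\|_{H^{-2}}^2\big)\Big),
\]
and the lower order term is compact relative to the left-hand side since $L^2\hookrightarrow H^{-1}$ and $H^{-1}\hookrightarrow H^{-2}$ are compact on the bounded domain $\Omega$. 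So the remaining task is a low frequency / finite-dimensional matter: absorb the compact remainder, using hypothesis (4).

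I would proceed by the standard compactness--uniqueness argument. Assuming \cref{eq:general ob} fails, one produces solutions $v^k$ with data normalized in $(L^2\times H^{-1})^n$ and $\int_0^T\!\int_\omega|\sum_j b_j\kappa_j v_j^k|^2\to 0$; the weak inequality keeps the $(H^{-1}\times H^{-2})^n$-norm of the data bounded below, and after extraction the compact embeddings upgrade weak convergence in $(L^2\times H^{-1})^n$ to strong convergence in $(H^{-1}\times H^{-2})^n$; applying the weak inequality to the differences $v^k-v^\ell$ then shows the data are Cauchy in $(L^2\times H^{-1})^n$, so the limit $v$ is a \emph{nonzero} element of the space $\mathcal N$ of data $(v_1^0,\dots,v_n^1)\in(L^2\times H^{-1})^n$ whose adjoint evolution satisfies $\sum_j b_j\kappa_j v_j=0$ in $]0,T[\times\omega$. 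On $\mathcal N$ the weak inequality reads $\|\,\cdot\,\|_{(L^2\times H^{-1})^n}\le C\|\,\cdot\,\|_{(H^{-1}\times H^{-2})^n}$, so by Riesz's theorem $\mathcal N$ is finite-dimensional; moreover, a standard argument (using that GCC at time $T$ self-improves to GCC at a slightly smaller time --- which is where hypothesis (3) enters --- together with the monotonicity in $T$ of the analogous spaces) lets one reduce to the case where $\mathcal N$ is invariant under the group $S(t)$ of \cref{eq:adjoint system of different metrics}, hence under the generator $\mathscr A$ restricted to it.

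With $\mathcal N$ finite-dimensional and $\mathscr A$-invariant, complexify and pick an eigenvector $(\phi^0,\phi^1)\ne 0$ of $\mathscr A|_{\mathcal N}$ with eigenvalue $\mu\in\C$. Reading off the block structure of $\mathscr A$, $\phi_i^1=-\mu\phi_i^0$ and $-\Delta_{K_i}\phi_i^0=-\mu^2\phi_i^0$ for each $i$; setting $\lambda:=i\mu$ this says each $\phi_i^0$ is, by elliptic regularity, an $H^1_0(\Omega)$-eigenfunction of $-\Delta_{K_i}$ for the common value $\lambda^2$, and the corresponding solution $v_j(t)=e^{-\mu t}\phi_j^0$ still satisfies $\sum_j b_j\kappa_j\phi_j^0=0$ in $\omega$. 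This is exactly a configuration excluded by hypothesis (4), so $\phi_j^0\equiv 0$ for all $j$, whence $\phi^1=-\mu\phi^0=0$ and $(\phi^0,\phi^1)=0$, a contradiction. Therefore $\mathcal N=\{0\}$, \cref{eq:general ob} holds, and \cref{thm:thm of varying metric} follows.

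The real analytic work --- the microlocal propagation estimates near the boundary and across the distinct characteristic speeds behind the weak observability inequality --- sits inside the proof of \cref{thm:except finte dim} and is taken as given here. Within the present argument the delicate step is the reduction establishing that the finite-dimensional space of invisible adjoint data may be taken invariant under the wave group, so that it genuinely contains a joint eigenmode of the $-\Delta_{K_i}$; once that is secured, hypothesis (4) closes the argument at once. (A cleaner alternative avoiding this point is to run the compactness argument in the equivalent form of a uniform resolvent estimate for the adjoint system --- legitimate since its generator is skew-adjoint with compact resolvent --- in which case, in the bounded-frequency regime, the eigenmode appears directly as the weak limit of the bad sequence.)
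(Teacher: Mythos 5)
Your proposal is correct and takes essentially the same route as the paper: the weak observability inequality from the high-frequency analysis, a compactness--uniqueness argument reducing everything to the finite-dimensional space $\mathcal{N}(T)$ of invisible adjoint data, stability of that space under the generator $\mathscr{A}$, extraction of an eigenvector, and hypothesis (4) to force $\mathcal{N}(T)=\{0\}$, hence the full observability inequality. The only divergence is in one technical sub-step: you obtain the $\mathscr{A}$-invariance via the semigroup shift $S(\epsilon)\mathcal{N}(T)\subset\mathcal{N}(T-\epsilon)$ together with stabilization of $\dim\mathcal{N}(s)$ for $s$ near $T$ (GCC persisting at slightly smaller times), whereas the paper proves $\mathcal{N}(T)\subset D(\mathscr{A})$ by resolvent-regularized difference quotients and gets invariance by differentiating solutions in time; both are standard and equivalent in substance.
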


Now, we consider the particular case of constant coefficients. Define the diagonal matrix $D=\left(
\begin{array}{ccc}
     d_1& & \\
      &\ddots& \\
      & &d_n
\end{array}
\right)$ and $B=\left(
\begin{array}{c}
     b_1 \\
     \vdots\\
     b_n
\end{array}
\right)$.
We use $\Delta$ to denote the canonical Laplace operator. Now we consider the simultaneous control problem for the system:
\begin{equation}\label{eq:control system of constant coeff}
    \p_t^2 U-D\Delta U=Bf\mathbf{1}_{]0,T[}(t)\mathbf{1}_{\omega}(x)\text{ in }]0,T[\times\Omega,
\end{equation}
where $U=\left(
\begin{array}{cc}
     u_1 \\
     \vdots\\
     u_n
\end{array}
\right)$. This system can be written as 
\begin{equation*}
    \left\{
    \begin{array}{l}
        (\partial_t^2-d_1\Delta)u_1=b_1f\mathbf{1}_{]0,T[}(t)\mathbf{1}_{\omega}(x)\text{ in }]0,T[\times\Omega,\\
        \vdots\\
        (\partial_t^2-d_n\Delta)u_n=b_n f\mathbf{1}_{]0,T[}(t)\mathbf{1}_{\omega}(x)\text{ in }]0,T[\times\Omega,\\
        u_j=0\quad\text{ on }]0,T[\times\p\Omega,1\leq j\leq n,\\
    u_j(0,x)=u_j^0(x),\quad \p_tu_j(0,x)=u_j^1(x), 1\leq j\leq n.
    \end{array}
    \right.
\end{equation*}
First, we introduce the Kalman rank condition for the system \cref{eq:control system of constant coeff}.
\begin{defi}[Kalman rank condition]
Define $[D|B]=[D^{n-1}B|\cdots|DB|B]$. We say $(D,B)$ satisfies the Kalman rank condition if and only if $[D|B]$ is full rank.
\end{defi}
\begin{rem}
In our setting, $(D,B)$ satisfies the Kalman rank condition if and only if all $d_j$ are distinct and $b_j\neq0$, $1\leq j\leq n$(See \cite[Remark 1.1]{ammar2009kalman}).
\end{rem}
\begin{thm}\label{thm:constant result}
Given $T>0$, suppose that:
\begin{enumerate}
  \item $(\omega,T,p_{d_i})$ satisfies GCC, $i=1,\cdots,n$.
  %\item $(D,B)$ satisfies the Kalman rank condition.
  \item $\Omega$ has no infinite order of tangential contact on the boundary.
\end{enumerate}
Then the system \cref{eq:control system of constant coeff} is exactly controllable in $(H^1_0(\Omega)\times L^2(\Omega))^n$ if and only if $(D,B)$ satisfies the Kalman rank condition.
\end{thm}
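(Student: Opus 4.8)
The plan is to deduce the theorem from \cref{thm:thm of varying metric} together with the Hilbert uniqueness method, using the stated remark that $(D,B)$ satisfies the Kalman rank condition if and only if the $d_j$ are pairwise distinct and all $b_j\neq 0$. For constant coefficients we have $K_i=d_iI$, hence $p_{K_i}=p_{d_i}$ and $\kappa_i=d_i^{-d/2}$ is a nonzero constant, so after absorbing $\kappa_i$ into $b_i$ the observability inequality \cref{eq:general ob} is an inequality about free solutions of $\Box_{d_i}v_i=0$ with Dirichlet conditions, and $\Omega$ being a domain we may use its connectedness.

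First I would prove that the Kalman rank condition is \emph{necessary}, arguing on \cref{eq:general ob}, which by HUM is equivalent to exact controllability. If $b_{i_0}=0$ for some $i_0$, pick $v_{i_0}$ any nonzero Dirichlet solution of $\Box_{d_{i_0}}v_{i_0}=0$ (e.g.\ $v_{i_0}=\cos(\sqrt{d_{i_0}\mu}\,t)\phi$ with $\phi$ a Dirichlet eigenfunction, $-\Delta\phi=\mu\phi$) and $v_j\equiv0$ for $j\neq i_0$; then the left-hand side of \cref{eq:general ob} vanishes while the right-hand side is positive. If $d_{i_0}=d_{j_0}$ with $i_0\neq j_0$ (and $b_{i_0},b_{j_0}\neq0$, otherwise we are in the previous case), pick $\psi$ a nonzero Dirichlet solution of $\Box_{d_{i_0}}\psi=0$ and set $v_{i_0}=b_{j_0}\psi$, $v_{j_0}=-b_{i_0}\psi$, $v_j\equiv0$ otherwise; since $\kappa_{i_0}=\kappa_{j_0}$, the combination $\sum_j b_j\kappa_jv_j$ vanishes identically while the right-hand side of \cref{eq:general ob} is positive. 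In both cases \cref{eq:general ob} fails, so the system is not exactly controllable. (Equivalently, on the state side, the components $b_{j_0}u_{i_0}-b_{i_0}u_{j_0}$, respectively $u_{i_0}$, solve autonomous wave equations and are thus not controllable.)

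For \emph{sufficiency}, assume the $d_j$ distinct and all $b_j\neq0$. Reordering the components — which changes nothing by the remark after \cref{thm:except finte dim}, and which preserves GCC since it only permutes $\{p_{d_1},\dots,p_{d_n}\}$ — we may assume $d_1>d_2>\cdots>d_n$, whence $K_1>\cdots>K_n$ everywhere, so hypotheses (1)–(3) of \cref{thm:thm of varying metric} hold; it remains to verify the unique continuation property of eigenfunctions. Let $\lambda\in\C$ and $(\phi_1,\dots,\phi_n)\in(H^1_0(\Omega))^n$ solve $-d_i\Delta\phi_i=\lambda^2\phi_i$ in $\Omega$ with $\sum_i b_i\kappa_i\phi_i=0$ in $\omega$. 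If $\lambda=0$ each $\phi_i$ is harmonic with zero trace, hence $\phi_i\equiv0$. If $\lambda\neq0$, set $\mu_i=\lambda^2/d_i$; by ellipticity and analytic hypoellipticity of $\Delta$ each $\phi_i$ is real-analytic on the connected open set $\Omega$, so $g:=\sum_i b_i\kappa_i\phi_i$ is real-analytic and, vanishing on $\omega$, vanishes on all of $\Omega$. Applying $\Delta^k$ and using $\Delta\phi_i=-\mu_i\phi_i$ gives $\sum_i b_i\kappa_i\mu_i^k\phi_i\equiv0$ in $\Omega$ for every $k\geq0$. Since the $d_i$ are distinct and $\lambda\neq0$, the $\mu_i$ are pairwise distinct, so the Vandermonde matrix $(\mu_i^k)_{0\leq k\leq n-1,\,1\leq i\leq n}$ is invertible and therefore $b_i\kappa_i\phi_i\equiv0$, i.e.\ $\phi_i\equiv0$, for every $i$. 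Thus the unique continuation property holds and \cref{thm:thm of varying metric} yields exact controllability.

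The only genuinely new ingredient is this unique continuation property, and its proof is short precisely because constant-coefficient eigenfunctions are real-analytic; the one point to handle with care is that $\lambda$ is an arbitrary complex number, so some $\mu_i$ need not be a genuine Dirichlet eigenvalue (the corresponding $\phi_i$ then vanishes automatically, and the Vandermonde argument bypasses the case distinction uniformly). The necessity direction is elementary, and I would present it on the observability side as above to keep the exposition consistent with the rest of the paper.
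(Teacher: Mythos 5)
Your proposal is correct and follows essentially the same route as the paper: necessity via explicit invisible solutions of the adjoint system when some $b_j=0$ or two speeds coincide, and sufficiency by reducing to the unique continuation of eigenfunctions (the paper phrases this through \cref{thm:except finte dim} and the space $\mathscr{N}(T)$, you invoke \cref{thm:thm of varying metric}, which is the same reduction), then using analyticity of the eigenfunctions to propagate $\sum_i b_i\kappa_i\phi_i=0$ from $\omega$ to $\Omega$ and applying powers of $\Delta$. Your Vandermonde matrix in the $\mu_i=\lambda^2/d_i$ is just the paper's invertible matrix $[D|B]^tD^{1-n}$ in the single-control case, so there is no substantive difference.
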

\begin{rem}
Let $T_0$ be the controllability time corresponding to the wave equation with unit speed of propagation. Then the controllability time in the \cref{thm:constant result} satisfies $T>T_0\max\{\frac{1}{\sqrt{d_j}};j=1,2,\cdots,n\}$.
\end{rem}

In advance, we consider the case with multiple control functions $f_1,f_2,\cdots,f_m(1\leq m\leq n)$. To be more specific, we consider the system:
\begin{equation}\label{eq:multi control system}
\left\{
\begin{array}{l}
     \p_t^2 U-D\Delta U=BF\mathbf{1}_{]0,T[}(t)\mathbf{1}_{\omega}(x)\text{ in }]0,T[\times\Omega,  \\
     U|_{\p\Omega}=0,\\
     (U,\p_t U)|_{t=0}=(U^0,U^1).
\end{array}
\right.
\end{equation}
where $D=diag(d_1,d_2,\cdots,d_n)$, $F=\left(
\begin{array}{cc}
     f_1 \\
     \vdots\\
     f_m
\end{array}
\right)$, and $B=\left(
            \begin{array}{ccc}
              b_{11} & \cdots & b_{1m} \\
              \vdots  & \ddots & \vdots \\
              b_{n1} & \cdots & b_{nm}  \\
           \end{array}
          \right)$. We can also define the Kalman rank condition $rank[D|B]=n$. Here we recall that $[D|B]=(D^{n-1}B|D^{n-2}B|\cdots|DB|B)$. We have the following theorem:
\begin{thm}\label{thm:mutli thm}
Given $T>0$, suppose that:
\begin{enumerate}
  \item $(\omega,T,p_{d_i})$ satisfies GCC, $i=1,\cdots,n$.
  %\item $(D,B)$ satisfies the Kalman rank condition.
  \item $\Omega$ has no infinite order of contact on the boundary.
\end{enumerate}
Then the system \cref{eq:multi control system} is exactly controllable if and only if $(D,B)$ satisfies the Kalman rank condition.
\end{thm}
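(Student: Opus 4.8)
The plan is to follow the three-step scheme already used for \cref{thm:except finte dim,thm:thm of varying metric,thm:constant result}; the one genuinely new feature is that the Kalman rank condition now allows several equations to travel at the \emph{same} speed, so that the characteristic sets of the associated d'Alembertians coincide and one must keep track of off-diagonal microlocal defect measures \emph{within} each speed block. I would first record the combinatorial reformulation established in \Cref{appendix}: writing $\mu_1<\dots<\mu_r$ for the distinct values among $d_1,\dots,d_n$, $I_\ell=\{i:d_i=\mu_\ell\}$, $k_\ell=|I_\ell|$, and $B_{I_\ell}$ for the $k_\ell\times m$ submatrix of $B$ with rows indexed by $I_\ell$, one has $\operatorname{rank}[D|B]=n$ if and only if $\operatorname{rank}B_{I_\ell}=k_\ell$ for every $\ell$ (by the Hautus test, left eigenvectors of the diagonal $D$ being supported on a single block). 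By \cite{HUM}, exact controllability of \cref{eq:multi control system} is equivalent to an observability inequality for the adjoint system $(\p_t^2-d_i\Delta)v_i=0$ ($1\le i\le n$, with Dirichlet boundary conditions) whose observed quantity is $\int_0^T\int_\omega|B^{\mathsf T}V|^2\,dx\,dt$, $V=(v_1,\dots,v_n)$.

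For \emph{necessity}, if $\operatorname{rank}[D|B]<n$ I would pick an index $\ell$ and $0\ne c\in\R^{k_\ell}$ with $B_{I_\ell}^{\mathsf T}c=0$, fix a Dirichlet eigenfunction $\phi$ of $-\Delta$ on $\Omega$ with eigenvalue $\nu>0$, and set $v_i(t,x)=c_i\phi(x)\cos(\sqrt{\mu_\ell\nu}\,t)$ for $i\in I_\ell$ and $v_i\equiv0$ otherwise. This solves the adjoint system, carries nonzero $(L^2\times H^{-1})^n$ energy, and its $j$-th observed component is $\phi(x)\cos(\sqrt{\mu_\ell\nu}\,t)(B_{I_\ell}^{\mathsf T}c)_j\equiv0$; hence observability, and so exact controllability, fails.

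For \emph{sufficiency} I would argue by contradiction. Supposing observability fails, take adjoint solutions $V^{(p)}=(v_i^{(p)})$ normalised by $\sum_i(\|v_i^{0,(p)}\|_{L^2}^2+\|v_i^{1,(p)}\|_{H^{-1}}^2)=1$ with observed quantity tending to $0$, and pass to the Hermitian positive semidefinite matrix-valued microlocal defect measure $(\mu_{ii'})$ of the bounded family $(v_i^{(p)})\subset L^2((0,T)\times\Omega)$ in the sense of \Cref{sec:Microlocal defect measure} and \cite{measure-for-system}. Since $v_i^{(p)}$ solves $(\p_t^2-d_i\Delta)v_i^{(p)}=0$, the diagonal $\mu_{ii}$ is carried by the cone $\Sigma_{d_i}=\{\tau^2=d_i|\xi|^2\}$; as $\Sigma_d\cap\Sigma_{d'}=\emptyset$ in $S^*$ for $d\ne d'$, the matrix measure is block-diagonal along the speed blocks, and the limiting observation relation becomes, for each $\ell$ and $j$, $\langle\beta_\ell^{(j)},(\mu_{ii'})_{i,i'\in I_\ell}\,\beta_\ell^{(j)}\rangle=0$ over $\omega\times(0,T)$, where $\beta_\ell^{(j)}=(b_{ij})_{i\in I_\ell}$; positive semidefiniteness forces $(\mu_{ii'})_{i,i'\in I_\ell}\beta_\ell^{(j)}=0$ for all $j$, and since $\operatorname{rank}B_{I_\ell}=k_\ell$ the vectors $\beta_\ell^{(j)}$ span $\R^{k_\ell}$, so $(\mu_{ii'})_{i,i'\in I_\ell}$, hence each $\mu_{ii}$, vanishes over $\omega\times(0,T)$. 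Propagating $\mu_{ii}$ along the generalised bicharacteristics of $p_{d_i}$ --- which is allowed precisely because $\Omega$ has no infinite order of tangential contact --- and using GCC for each $p_{d_i}$ gives $\mu_{ii}\equiv0$, so $v_i^{(p)}\to0$ in $L^2_{\mathrm{loc}}$; exactly as in \cref{thm:except finte dim} this yields a weak observability inequality with a compact lower-order remainder, and hence $\mathbb P$-exact controllability with $\mathbb P$ the orthogonal projection off a finite-dimensional space $N$.

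To remove $N$ I would check that the constant-coefficient system automatically enjoys the unique continuation of eigenfunctions: if $-d_i\Delta\phi_i=\lambda^2\phi_i$ in $\Omega$ with $\phi_i\in H^1_0(\Omega)$ and $\sum_i b_{ij}\phi_i=0$ in $\omega$ for all $j$, then $(\phi_i)\equiv0$. Indeed each nonzero $\phi_i$ is a Dirichlet eigenfunction of $-\Delta$ for the eigenvalue $\lambda^2/d_i>0$, hence real-analytic in $\Omega$ (constant-coefficient elliptic equation); grouping by the distinct eigenvalues $\lambda^2/\mu_\ell$ and applying powers of $\Delta$ (a Vandermonde argument in those eigenvalues) shows each block sum $\sum_{i\in I_\ell}b_{ij}\phi_i$ vanishes on $\omega$, then injectivity of $c\mapsto B_{I_\ell}^{\mathsf T}c$ (again from $\operatorname{rank}B_{I_\ell}=k_\ell$) gives $\phi_i\equiv0$ on $\omega$, and real-analyticity propagates this to all of $\Omega$. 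Thus $N=\{0\}$ and \cref{eq:multi control system} is exactly controllable in $(H^1_0(\Omega)\times L^2(\Omega))^n$, just as \cref{thm:thm of varying metric} follows from \cref{thm:except finte dim}. The step I expect to be hardest is the high-frequency one: unlike in \cref{thm:except finte dim,thm:thm of varying metric}, where the strict ordering $K_1>\dots>K_n$ keeps all characteristic cones disjoint over $\omega$ and annihilates every off-diagonal contribution for free, here one must handle the off-diagonal defect measures of two components travelling at the same speed, extract from $\operatorname{rank}B_{I_\ell}=k_\ell$ that the whole $I_\ell$-block of the matrix measure dies on $\omega$, and set up the matrix-valued measure together with its flow invariance carefully up to and along the boundary --- which is exactly the role of the no-infinite-order-of-contact hypothesis.
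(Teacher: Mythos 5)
Your proposal is correct and follows essentially the same route as the paper: the block reformulation of the Kalman condition from \Cref{appendix}, HUM plus a contradiction argument with (matrix-valued) microlocal defect measures whose cross-speed components vanish because the characteristic cones are disjoint, invertibility of $B_iB_i^{*}$ (equivalently, the columns of $B_{I_\ell}$ spanning $\R^{k_\ell}$) to kill each block of the measure on $]0,T[\times\omega$, propagation and GCC, and then the analyticity/Vandermonde unique-continuation argument and the explicit invisible solutions for necessity. Your write-up merely spells out the low-frequency and necessity steps that the paper compresses into ``the rest of the proof is similar to the single control case'' (only the trivial case $\lambda=0$ is left implicit, where the Dirichlet condition gives $\phi_i=0$ directly).
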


\section{Geometric Preliminaries}
\label{sec:Geometric Preliminaries}
Let $B=\{y\in\R^{d}:|y|<1\}$ be the unit ball in $\R^{d}$. In a tubular neighbourhood of the boundary, we can identify $M=\Omega\times\R_t$ locally as $[0,1[\times B$. More precisely, for $z\in \overline{M}=\overline{\Omega}\times\R_t$, we note that $z=(x,y)$, where $x\in [0,1[$ and $y\in B$ and $z\in\p M=\p\Omega\times\R_t$ if and only if $z=(0,y)$. Now we consider $R=R(x,y,D_y)$ which is a second order scalar, self-adjoint, classical, tangential and smooth pseudo-differential operator, defined in a neighbourhood of $[0,1]\times B$ with a real principal symbol $r(x,y,\eta)$, such that
\begin{equation}
    \frac{\partial r}{\partial \eta}\neq0\text{ for }(x,y)\in[0,1[\times B\text{ and }\eta\neq0.
\end{equation}
Let $Q_0(x,y,D_y)$, $Q_1(x,y,D_y)$ be smooth classical tangential pseudo-differential operators defined in a neighbourhood of $[0,1]\times B$, of order $0$ and $1$, and principal symbols $q_0(x,y,\eta)$, $q_1(x,y,\eta)$, respectively. Denote $P=(\partial^2_x+R)Id+Q_0\partial_x+Q_1$. The principal symbol of $P$ is 
\begin{equation}
    p=-\xi^2+r(x,y,\eta).
\end{equation}
We use the usual notations $TM$ and $T^*M$ to denote the tangent bundle and cotangent bundle corresponding to $M$, with the canonical projection $\pi$
\begin{equation*}
    \pi:TM(\text{ or }T^*M)\rightarrow M.
\end{equation*}
Denote $r_0(y,\eta)=r(0,y,\eta)$. Then we can decompose $T^*\partial M$ into the disjoint union $\mathcal{E}\cup\mathcal{G}\cup\mathcal{H}$, where
\begin{equation}
    \mathcal{E}=\{r_0<0\},\quad \mathcal{G}=\{r_0=0\},\quad \mathcal{H}=\{r_0>0\}. 
\end{equation}
The sets $\E$, $\G$, $\h$ are called elliptic, glancing, and hyperbolic set, respectively. 
Define $Char(P)=\{(x,y,\xi,\eta)\in T^*\R^{d+1}|_{\overline{M}}:\xi^2=r(x,y,\xi,\eta)\}$ to be the characteristic manifold of $P$. For more details, see \cite{measure-for-system} and \cite{MR1627111}.
\subsection{Generalised bicharacteristic flow}
We begin with the definition of the Hamiltonian vector field. For a symplectic manifold $S$ with local coordinates $(z,\zeta)$, a Hamiltonian vector field associated with a real valued smooth function $f$ is defined by the expression:
\begin{equation*}
    H_f=\frac{\p f}{\p\zeta}\frac{\p}{\p z}-\frac{\p f}{\p z}\frac{\p}{\p\zeta}.
\end{equation*}
Considering the principal symbol $p$, we can also consider the associated Hamiltonian vector field $H_p$. The integral curve of this Hamiltonian $H_p$, denoted by $\gamma$, is called a bicharacteristic of $p$. Our next goal is to study the behavior of the bicharacteristic near the boundary. To describe the different phenomena when a bicharacteristic approaches the boundary, we need a more accurate decomposition of the glancing set $\G$. Let $r_1=\p_x r|_{x=0}$. Then we can define the decomposition $\G=\bigcup_{j=2}^{\infty}\G^{j}$, with
\begin{equation*}
\begin{aligned}
    \G^2&=\{(y,\eta):r_0(y,\eta)=0,r_1(y,\eta)\neq0\},\\
    \G^3&=\{(y,\eta):r_0(y,\eta)=0,r_1(y,\eta)=0,H_{r_0}(r_1)\neq0\},\\
    &\vdots\\
    \G^{k+3}&=\{(y,\eta):r_0(y,\eta)=0,H^j_{r_0}(r_1)=0,\forall j\leq k,H^{k+1}_{r_0}(r_1)\neq0\},\\
    &\vdots\\
    \G^{\infty}&=\{(y,\eta):r_0(y,\eta)=0,H^j_{r_0}(r_1)=0,\forall j\}.
\end{aligned}
\end{equation*}
Here $H^j_{r_0}$ is just the vector field $H_{r_0}$ composed $j$ times. Moreover, for $\G^2$, we can define $\G^{2,\pm}=\{(y,\eta):r_0(y,\eta)=0,\pm r_1(y,\eta)>0\}$. Thus $\G^2=\G^{2,+}\cup\G^{2,-}$. For $\rho\in\G^{2,+}$, we say that $\rho$ is a gliding point and for $\rho\in\G^{2,-}$, we say that $\rho$ is a diffractive point. For $\rho\in\G^j$, $j\geq2$, we say that a bicharacterisric of $p$ tangentially contact the boundary $\{x=0\}\times B$ with order $j$ at the point $\rho$.

Consider a bicharacteristic $\gamma(s)$ with $\pi(\gamma(0))\in M$ and $\pi(\gamma(s_0))\in\p M$ be the first point which touches the boundary. Then if $\gamma(s_0)\in\h$, we can define $\xi^{\pm}(\gamma(s_0))=\pm\sqrt{r_0(\gamma(s_0))}$, which are the two different roots of $\xi^2=r_0$ at the point $\gamma(s_0)$. Notice that the bicharacteristic with the direction $\xi^-$ will leave the domain $M$ while the bicharacteristic with the other direction $\xi^+$ will enter into the interior of $M$. This leads to a definition of the broken bicharacteristics(See \cite{hormander1985analysis} Section 24.2 for more details):
\begin{defi}\label{defibica}
A broken bicharacteristic of $p$ is a map:
\begin{equation*}
    s\in I\backslash D\mapsto \gamma(s)\in T^*M\backslash\{0\}
\end{equation*}
where $I$ is an interval on $\R$ and $D$ is a discrete subset, such that
\begin{enumerate}
    \item If $J$ is an interval contained in $I\backslash D$, then for $s\in J\mapsto\gamma(s)$ is a bicharacteristic of $p$ in $M$.
    \item If $s\in D$, then the limits $\gamma(s^+)$ and $\gamma(s^-)$ exist and belongs to $T_z^*M\backslash\{0\}$ for some $z\in\p M$, and the projections in $T_z^*\p M\backslash\{0\}$ are the same hyperbolic point.
\end{enumerate}
\end{defi}
If $\gamma(s_0)\in\G$, we have different situations. If $\gamma(s_0)\in\G^{2,+}$, then $\gamma(s)$, locally near $s_0$, passes transversally and enters into $T^*M$ immediately. If $\gamma(s_0)\in\G^{2,-}$ or $\gamma(s_0)\in\G^k$ for some $k\geq3$, then $\gamma(s)$ will continue inside $T^*\p M$ and follow the Hamiltonian flow of $H_{-r_0}$. To be more precise, we have the definition of the generalized bicharacteristics(See \cite{hormander1985analysis} Section 24.3 for more details):
\begin{defi}
A generalized bicharacteristic of $p$ is a map:
\begin{equation*}
    s\in I\backslash D\mapsto\gamma(s)\in T^*M\cup\G
\end{equation*}
where $I$ is an interval on $\R$ and $D$ is a discrete subset $I$ such that $p\circ\gamma=0$ and the following properties hold:
\begin{enumerate}
    \item $\gamma(s)$ is differentiable and $\frac{\dif\gamma}{\dif s}=H_p(\gamma(s))$ if $\gamma(s)\in T^*M$ or $\gamma(s)\in\G^{2,+}$.
    \item Every $t\in D$ is isolated i.e. there exists $\epsilon>0$ such that $\gamma(s)\in T^*\overline{M}\backslash T^*\p M$ if $0<|s-t|<\epsilon$, and the limits $\gamma(s^{\pm})$ are different points in the same hyperbolic fiber of $T^*\p M$.
    \item $\gamma(s)$ is differentiable and $\frac{\dif\gamma}{\dif s}=H_{-r_0}(\gamma(s))$ if $\gamma(s)\in \G\backslash\G^{2,+}$.
\end{enumerate}
\end{defi}
\begin{rem}
We denote the Melrose cotangent compressed bundle by ${ }^bT^*\overline{M}$ and the associated canonical map by $j:$ $T^*\overline{M}\mapsto { }^bT^*\overline{M}$. $j$ is defined by
\begin{equation*}
    j(x,y,\xi,\eta)=(x,y,x\xi,\eta).
\end{equation*}
Under this map $j$, one could see $\gamma(s)$ as a continuous flow on the compressed cotangent bundle ${ }^bT^*\overline{M}$. This is the so-called Melrose-Sj\"ostrand flow. 
\end{rem}
From now on we always assume that there is no infinite tangential contact between the bicharacteristic of $p$ and the boundary. This is in the meaning of the following definition:
\begin{defi}
We say that there is no infinite contact between the bicharacteristics of $p$ and the boundary if there exists $N\in\N$ such that the gliding set $\G$ satisfies 
\begin{equation*}
    \G=\bigcup_{j=2}^{N}\G^{j}.
\end{equation*}
\end{defi}
It is well-known that under this hypothesis there exists a unique generalized bicharacteristic passing through any point. This means that the Melrose-Sj\"ostrand flow is globally well-defined. One can refer to \cite{melrose1978singularities} and \cite{melrose1982singularities} for the proof.

\section{High Frequency Estimates}\label{sec:High Frequency Estimates}
\subsection{Microlocal defect measure}\label{sec:Microlocal defect measure}
In this section, we introduce the microlocal defect measures based on the article by Gérard and Leichtnam~\cite{MR1233448} for Helmoltz equation and~Burq~\cite{MR1627111} for wave equations.  Let $(u^k)_{k\in\mathbb{N}}\in  L^2_{loc}(\mathbb{R}_t; L^2( \Omega))$ be a bounded sequence, converging weakly to $0$ and such that
\begin{equation}
\left\{
\begin{aligned}
    &Pu^k=o(1)_{H^{-1}},\\
    &u^k|_{\partial M}=0.
\end{aligned}
\right.
\end{equation}
Let $\underline{u}_k$ be the extension by $0$ across the boundary of $\Omega$. Then the sequence $\underline{u}_k$ is bounded in $L_{loc}^2( \mathbb{R}_t; L^2(\mathbb{R}^d))$.  
Let $\mathcal{A}$ be the space of classical polyhomogeneous pseudo-differential operators of order $0$ with compact support in $\R_t \times \mathbb{R}^d$ (i.e, $A=\varphi A\varphi$ for some $\varphi\in C^{\infty}_0(\R_t \times \mathbb{R}^d)$). Let us denote by $\mathcal{M}^+$ the set of non negative Radon measures on $T^* (\R_t \times \mathbb{R}^d)$. From~\cite[Section 1]{MR1627111}, we have  the existence of the microlocal defect measure as follows: 
\begin{prop}[Existence of the microlocal defect measure]\label{thm:existence of the measure}
There exists a subsequence of $(\underline{u}^k)$ (still noted by $(\underline{u}^k)$) and $\mu \in\M^+$ such that 
\begin{equation}
    \forall A\in\A,\quad %\lim_{k\rightarrow\infty}\phi(A,u^k)=
    \lim_{k\rightarrow\infty}(A\underline{u}^k,\underline{u}^k)_{L^2}=\langle\mu,\sigma(A)\rangle,
\end{equation}
where $\sigma(A)$ is the principal symbol of the operator $A$ (which is a smooth function homogeneous of order $2$ in the variable $\xi$, i.e. a function on $S^* ((\R_t \times \mathbb{R}^d))$.
\end{prop}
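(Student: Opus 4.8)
The plan is to reproduce the classical construction of a microlocal defect (H-)measure, following G\'erard and Tartar; note that for this statement only the boundedness of $(\underline{u}^k)$ in $L^2$ near compact sets and its weak convergence to $0$ are used, the equation $Pu^k=o(1)_{H^{-1}}$ and the boundary condition being irrelevant here. For $A=\varphi A\varphi\in\A$, set $\ell_k(A):=(A\underline{u}^k,\underline{u}^k)_{L^2}$, and put $v^k:=\varphi\underline{u}^k$, a sequence bounded in $L^2(\R_t\times\R^d)$, supported in one fixed compact set, with $v^k\rightharpoonup0$. By the Calder\'on--Vaillancourt theorem one has $|\ell_k(A)|\le\|A\|_{\mathcal{L}(L^2)}\sup_k\|v^k\|_{L^2}^2\le C_A$. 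The first point I would record is that if $\sigma(A)=0$ then $A$ has order $-1$ and compact support, hence is compact on $L^2$, so $\ell_k(A)\to0$; therefore any limit of $\ell_k(A)$ along a subsequence depends on $A$ only through $\sigma(A)$.

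Next I would extract the subsequence. Restricting attention to real-valued principal symbols (the general case follows by $\C$-linearity), the space of real continuous functions on $S^*(\R_t\times\R^d)$ with compactly supported base projection is separable; I would pick a countable dense family $(a_j)$ and self-adjoint operators $A_j\in\A$ with $\sigma(A_j)=a_j$. Using the uniform bounds $|\ell_k(A_j)|\le C_{A_j}$ and a diagonal argument, pass to a subsequence along which $\ell_k(A_j)$ converges for every $j$. For a general $A\in\A$ with real symbol $\sigma(A)=a$, write $\ell_k(A)-\ell_k(A_j)=(\Op(a-a_j)v^k,v^k)+o_k(1)$, the remainder being the pairing against a compactly supported operator of order $-1$, hence compact. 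Setting $b:=a-a_j$, the antisymmetric part of $\Op(b)$ has order $-1$, so $\Im(\Op(b)v^k,v^k)=o_k(1)$, while the sharp G\aa{}rding inequality applied to the nonnegative symbols $\|b\|_\infty\pm b$ yields $|\Re(\Op(b)v^k,v^k)|\le\|b\|_\infty\|v^k\|_{L^2}^2+C\|v^k\|_{H^{-1/2}}^2$. Since $v^k\rightharpoonup0$ in $L^2$ with fixed compact support, Rellich's theorem upgrades this to $v^k\to0$ strongly in $H^{-1/2}$, so $\limsup_k|\ell_k(A)-\ell_k(A_j)|\le C\|a-a_j\|_\infty$. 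Hence $(\ell_k(A))_k$ is Cauchy; its limit defines a linear functional $L$ with $L(\sigma(A))=\lim_k\ell_k(A)$, well defined by the first point, satisfying $|L(a)|\le C\|a\|_\infty$, so $L$ extends continuously to $C_c(S^*(\R_t\times\R^d))$.

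For positivity: if $a\ge0$, choose a self-adjoint $A\in\A$ with $\sigma(A)=a$; then $(Av^k,v^k)\in\R$, and sharp G\aa{}rding (modulo a compact order $-1$ term) gives $(Av^k,v^k)\ge-C\|v^k\|_{H^{-1/2}}^2\to0$, so $L(a)\ge0$. Thus $L$ is a nonnegative continuous linear functional on $C_c(S^*(\R_t\times\R^d))$, and the Riesz--Markov--Kakutani theorem furnishes a unique $\mu\in\M^+$ on $T^*(\R_t\times\R^d)$ (equivalently on $S^*(\R_t\times\R^d)$ by $0$-homogeneity) with $L(a)=\langle\mu,a\rangle$, i.e.\ $\lim_k(A\underline{u}^k,\underline{u}^k)_{L^2}=\langle\mu,\sigma(A)\rangle$ for all $A\in\A$. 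The two ingredients I expect to be the real content are the sharp G\aa{}rding inequality and the passage from weak $L^2$-convergence to strong $H^{-1/2}_{\mathrm{loc}}$-convergence via Rellich compactness; the rest is the routine separability-and-diagonal argument, and uniqueness of $\mu$ along the chosen subsequence is immediate from the density of $(a_j)$ in $C_c$.
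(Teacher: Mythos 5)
Your construction is correct and is essentially the classical G\'erard--Tartar argument (Calder\'on--Vaillancourt bound, compactness of compactly supported order $-1$ operators, diagonal extraction over a dense family of symbols, sharp G\aa{}rding plus Rellich for the approximation and positivity steps, then Riesz--Markov), which is precisely the construction the paper delegates to \cite[Section 1]{MR1627111} and \cite{MR1233448} rather than writing out. Since the paper gives no in-text proof of this proposition, there is nothing further to compare: your argument matches the cited one.
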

\begin{rem}
In the article \cite{MR1385677}, Lebeau constructed the microlocal defect measure in another approach (see \cite[Appendice]{MR1385677} for more details). In the article \cite{measure-for-system}, Burq and Lebeau proved the similar existence result \cite[Proposition 2.5]{measure-for-system} in a setting of systems, which can be seen as an extension of \cref{thm:existence of the measure} 
\end{rem}
From \cite[Théorème 15]{MR1627111}, we have the following proposition.
\begin{prop}\label{thm:singular lemma}
For the microlocal defect measure $\mu$ defined above, we have the following properties. 
\begin{itemize}
\item 
The measure $\mu$ is supported on the intersection of the characteristic manifold with $\mathbb{R}_t \times \overline{\Omega}$,
\begin{equation}\label{support}
 \text{supp} ( \mu) \subset \{ (t,x,\tau, \xi);  x\in \overline{M}, \tau ^2 = {}^t\xi K(x) \xi \}.
 \end{equation}
\item The measure $\mu$ does not charge the hyperbolic points in $\partial M$,
$$ \mu( \mathcal{H}) =0. $$
\item The measure $\mu$ is invariant by the generalised bicharacteristic flow.
\end{itemize}
\end{prop}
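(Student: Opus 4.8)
The plan is to follow the proof of \cite[Th\'eor\`eme~15]{MR1627111} (with the adaptation to the coupled system as in \cite[Section~2]{measure-for-system}) and to establish the three assertions in order: first the localization of $\mu$ on $Char(P)\cap(\R_t\times\overline\Omega)$, then $\mu(\h)=0$, and finally the invariance under the Melrose--Sj\"ostrand flow. Two preliminary facts would be used throughout: since $Pu^k=o(1)_{H^{-1}}$ with $u^k|_{\p M}=0$, elliptic regularity for the Dirichlet problem makes $(u^k)$ bounded in $H^1_{loc}$; and the extension by zero satisfies $P\underline u^k=\underline{Pu^k}+g^k\otimes\delta_{x=0}$, where $g^k=\p_\nu u^k|_{\p M}$ is bounded in $H^{-1/2}(\p M)$ (the vanishing trace of $u^k$ kills the jump term in $\p_x\underline u^k$, so only the Neumann trace survives in $\p_x^2\underline u^k$).

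For the support, I would take $A\in\A$ whose principal symbol is supported in a conic set where $p=-\xi^2+r\neq0$. If that set lies over $\{x>0\}$, ellipticity of $P$ there provides a parametrix $A=BP+R$ with $B$ of order $-2$ and $R$ regularizing; since a symbol supported away from $\{x=0\}$ does not see the boundary term $g^k\otimes\delta_{x=0}$, one obtains $(A\underline u^k,\underline u^k)=(B\,\underline{Pu^k},\underline u^k)+(R\underline u^k,\underline u^k)+o(1)\to0$, hence $\langle\mu,\sigma(A)\rangle=0$. Near a non‑characteristic boundary point — in particular over the elliptic set $\E$, where $r_0<0$ forces $\xi^2\neq r_0$ — I would instead invoke the parametrix of the Dirichlet problem for $\p_x^2+R$, which is elliptic there, to reach the same conclusion. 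That $\mu$ vanishes outside $\R_t\times\overline\Omega$ is immediate since $\underline u^k$ is supported in $\{x\geq0\}$. This yields the first bullet.

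For $\mu(\h)=0$, fix $\rho_0\in\h$, so $r_0(\rho_0)>0$ and the characteristic covectors above it are $\xi^{\pm}=\pm\sqrt{r_0(\rho_0)}\neq0$. In a conic neighbourhood I would factor $P$ microlocally as a product of two first‑order operators $\p_x-i\Lambda^{\pm}$ with real elliptic tangential symbols $\pm\sqrt{r}$ (absorbing $Q_0\p_x+Q_1$ into lower order), treat the normal variable $x$ as a time variable for each factor — transversality being $\p_\xi p=-2\xi\neq0$ — and run energy estimates in $x$: starting from $u^k|_{x=0}=0$ and $g^k$ bounded in $H^{-1/2}(\p M)$, one controls $u^k$ microlocalized near the fibre above $\rho_0$ in a slab $\{0\leq x<\epsilon\}$ with $L^2$‑mass $O(\epsilon)$ uniformly in $k$; letting $\epsilon\to0$ forces $\mu$ to charge no mass over $\rho_0$. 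Heuristically, a bicharacteristic through a hyperbolic point crosses $\{x=0\}$ transversally and spends only time $O(\epsilon)$ in the slab, in contrast with a glancing ray.

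For the invariance, in the interior I would use the standard commutator identity: for $A\in\A$ with real principal symbol $a$, homogeneous of degree $-1$ and supported in $\{x>0\}$, the operator $\tfrac1i[A,P]$ is of order $0$ with real principal symbol $\{a,p\}=H_p a$ (homogeneous of degree $0$); since $(APu^k,u^k)\to0$ (because $A\,\underline{Pu^k}=o(1)_{L^2}$ and the boundary term is irrelevant over $\{x>0\}$) and, by self‑adjointness of $P$, $(PAu^k,u^k)=(Au^k,Pu^k)\to0$, subtraction gives $\langle\mu,H_p a\rangle=0$ for all such $a$, i.e.\ $\mu$ is $H_p$‑invariant inside $T^*M$. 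The propagation across the boundary is the essential and hardest step: over elliptic boundary points there is nothing to prove since $\mu=0$ there; at hyperbolic points the Dirichlet condition forces the incoming and outgoing interior measures to agree (the reflection law), with no loss since $\mu(\h)=0$; and at glancing points I would reduce $P$ to a Melrose normal form and run the second‑microlocalization and positive‑commutator arguments of Melrose--Sj\"ostrand \cite{melrose1978singularities,melrose1982singularities} to propagate $\mu$ along the gliding and diffractive rays. The hypothesis that $\Omega$ has no infinite order of tangential contact guarantees the generalized bicharacteristic through each point is unique and the Melrose--Sj\"ostrand flow is globally defined, so these local statements patch into global invariance; the extension to the coupled system follows by applying the argument componentwise as in \cite{measure-for-system}. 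The main obstacle is precisely this propagation at the glancing set, and in particular through the higher‑contact strata $\G^k$, $k\geq3$ — everything else reduces to ellipticity or the elementary interior commutator identity.
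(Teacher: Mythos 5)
Your route is genuinely different from the paper's: the paper does not reprove this proposition at all, but quotes it from Burq (Th\'eor\`eme 15 of \cite{MR1627111}), and the only work it adds is the remark explaining how to pass from the $H^1$ (energy) solutions of that reference to the $L^2$ solutions considered here, namely by conjugating the order-zero test operators with $\partial_t$ (replacing $A$ by $B=-\partial_t\circ A\circ\partial_t$, which is harmless since $\tau^2$ is elliptic on the characteristic set), together with the observation that for hyperbolic reflection the invariance statement only makes sense $\mu$-a.e., and that Burq's appendix literally proves invariance of the support, the full invariance following from Lebeau's strategy in \cite{MR1385677}. You instead sketch the internal proof, which is legitimate and follows the standard outline (ellipticity off the characteristic set and over $\mathcal{E}$, transversality at $\mathcal{H}$, interior commutator identity, Melrose--Sj\"ostrand-type propagation at $\mathcal{G}$).

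There is, however, a genuine gap at the very start that propagates through your boundary arguments. Your first ``preliminary fact'' is false: $P$ is the wave operator, not an elliptic operator, so from $u^k$ bounded in $L^2$, $Pu^k=o(1)_{H^{-1}}$ and the Dirichlet condition one cannot conclude boundedness in $H^1_{loc}$ --- solutions with data in $L^2\times H^{-1}$ are genuinely only $L^2$, and this is precisely the level at which the proposition is stated. Consequently the claimed $H^{-1/2}(\partial M)$ bound on the Neumann trace $g^k$ also fails at this regularity (hidden regularity gives $\partial_\nu u\in L^2_{loc}(\partial M)$ for energy solutions; at the $L^2$ level it is one derivative worse), and it is exactly this trace bound that your support, hyperbolic-point and reflection arguments lean on when handling the term $g^k\otimes\delta_{x=0}$. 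The repair is the paper's device: work with $\partial_t^{-1}u^k$ (energy-level solutions) or equivalently test with $-\partial_t A\partial_t$, after which the $H^1$ theory of \cite{MR1627111} applies verbatim. A secondary caveat: at glancing points, propagation of singularities in the sense of Melrose--Sj\"ostrand \cite{melrose1978singularities} does not by itself yield invariance of the \emph{measure}; the measure-theoretic propagation requires the arguments of Lebeau/Burq, and the paper explicitly notes that only invariance of the support is proved in \cite[Appendice]{MR1627111}, which is all that is actually needed later.
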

\begin{rem} Notice first that in \cite[Section 3]{MR1627111}, the author considered the case of solutions to the wave equation at the energy level (bounded in $H^1_\text{loc}$, and hence was considering second order operators. However, it is easy to pass from $H^1$ to $L^2$ solutions by applying the operator $\partial_t$ and conversely from $L^2$ to $H^1$ by applying the operator $\partial_t^{-1}$, i.e. if $v$ is an $L^2$ solution, considering the solution $u$ associated to $\bigl( (-\Delta_D)^{-1} (\partial_t v\mid_{t=0}), v\mid_{t=0}\bigr)$, which of course satisfies $\partial_t u =v$. This procedure amounts to replacing the test operators of order $0$ $A$ by the test operator of order $2$, $B= -\partial_t\circ A\circ \partial_t$, but since $\tau^2$ does not vanish on the characteristic manifold, it is an elliptic factor which changes nothing.
\end{rem}
\begin{rem}
 Notice also that due to discontinuity of the generalised bicharacteristics when they reflect on the boundary at hyperbolic points (the points corresponding to the left and right limits at $s\in D$), in \cref{defibica}, the generalised bicharacteristic flow is not well defined (there are two points above any points corresponding to $s\in D$). However, since the measure $\mu$ does not charge these hyperbolic points, this flow is well defined $\mu$ almost surely and the invariance property makes sense. Notice also that in \cite[Appendice]{MR1627111},  weaker property than invariance (namely that the support is a union of generalised bicahracteristics) is proved. The general result follows from this weaker result by applying the strategy in ~\cite{MR1385677}. In any case, for the purpose of the present article, the invariance of the support would suffice.
\end{rem}

\subsection{Proof of the \cref{thm:except finte dim}}\label{sec:Proof of  thm:except finte dim}
Let $V=(v^0_1, v^1_1,\cdots, v^0_n,v_n^1)$. We introduce the following spaces:
\begin{itemize}
    \item We define $\K_1=(H^1_0(\Omega)\times L^2(\Omega))^n$ endowed with the norm
\begin{equation*}
    ||V||^2_{\mathcal{K}_1}=\sum_{j=1}^n\int_{\Omega}(K_j\nabla v^0_j\cdot\overline{\nabla v^0_j}+|v_i^1|^2)\kappa_i\dif x.
\end{equation*}
    \item We define $\K_0=(L^2(\Omega)\times H^{-1}(\Omega))^n$ endowed with the norm
$$
||V||^2_{\mathcal{K}_0}=\sum_{i=1}^n\int_{\Omega}|v_i^0|^2\kappa_idx+<v_i^1,T_{K_i}v_i^1>_{H^{-1},H_0^1},
$$
where 
\begin{equation*}
\begin{aligned}
   T_{K_i}:H^{-1}(\Omega)&\rightarrow H^1_0(\Omega)\\
   f&\mapsto w
\end{aligned}
\end{equation*}
is defined as the unique solution $w\in H^1_0(\Omega)$ to $-\frac{1}{\kappa_i}div(\kappa_iK_i\nabla T_{K_i}w)=f$.
    \item We define $\K_{-1}= (H^{-1}(\Omega)\times D(-\Delta)')^n$ endowed with the norm
$$
||V||^2_{\mathcal{K}_{-1}}=\sum_{i=1}^n<v_i^0,T_{K_i}v_i^0>_{H^{-1},H_0^1}+<v_i^1,\Tilde{T}_{K_i}v_i^1>_{D(-\Delta_{K_i})^*,D(-\Delta_{K_i})},
$$
where $D(-\Delta)$ is the domain of the Laplacian operator with zero Dirichlet boundary condition and $D(-\Delta)'$ is its dual space, and 
\begin{equation*}
\begin{aligned}
   \Tilde{T}_{K_i}:D(-\Delta)'&\rightarrow D(-\Delta)\\
   \Tilde{f}&\mapsto \Tilde{w}
\end{aligned}
\end{equation*}
is defined as the unique solution $\Tilde{w}\in D(-\Delta)$ to $(-\Delta_{K_i})^2\Tilde{T}_{K_i}\Tilde{w}=\Tilde{f}$.
\end{itemize}
\begin{rem}
For any $j\in\{1,2,\cdots,n\}$, $D(-\Delta_{K_j})=D(-\Delta)$.
\end{rem}
Recall the considered control system:
\begin{equation}\label{eq:control system of perp}
\left\{
  \begin{array}{l}
    \Box_{K_1}u_1=b_1f\mathbf{1}_{]0,T[}(t)\mathbf{1}_{\omega}(x)\text{ in }]0,T[\times\Omega,\\
    \Box_{K_2}u_2=b_2f\mathbf{1}_{]0,T[}(t)\mathbf{1}_{\omega}(x)\text{ in }]0,T[\times\Omega,\\
    \vdots\\
    \Box_{K_n}u_n=b_nf\mathbf{1}_{]0,T[}(t)\mathbf{1}_{\omega}(x)\text{ in }]0,T[\times\Omega,\\
    u_j=0\quad\text{ on }]0,T[\times\p\Omega,1\leq j\leq n,\\
    (u_1,\p_t u_1,\cdots,u_n,\p_t u_n)|_{t=0}=U(0).
  \end{array}
\right.
\end{equation}
Consider the homogeneous system:
\begin{equation}\label{eq:H-adjoint system perp}
\left\{
  \begin{array}{l}
    \Box_{K_1}v^h_1=0\text{ in }]0,T[\times\Omega,\\
    \Box_{K_2}v^h_2=0\text{ in }]0,T[\times\Omega,\\
    \vdots\\
    \Box_{K_n}v^h_n=0\text{ in }]0,T[\times\Omega,\\
    v^h_j=0\quad\text{ on }]0,T[\times\p\Omega,1\leq j\leq n,\\
    (v^h_1,\p_t v^h_1,\cdots,v^h_n,\p_t v^h_n)|_{t=0}=V^h(0)\in \K_1.
  \end{array}
\right.
\end{equation}
Now, let us define 
\begin{equation}\label{eq:vanishing space}
    E=\{V^h(0)\in\K_1:(b_1\kappa_1v^h_1+\cdots+b_n\kappa_nv^h_n)(t,x)=0\text{, for any }t\in]0,T[,x\in\omega\},
\end{equation}
where $(v^h_1,\cdots,v^h_n)$ is the solution to the homogeneous system \cref{eq:H-adjoint system perp}. Hence, $E$ is a closed subspace in $\K_1$. Denote the orthogonal projector operator $\mathbb{P}:\K_1\rightarrow E^{\perp}$.
And the adjoint system of System \cref{eq:control system of perp} is the following system:
\begin{equation}\label{eq:adjoint system perp}
\left\{
  \begin{array}{l}
    \Box_{K_1}v_1=0\text{ in }]0,T[\times\Omega,\\
    \Box_{K_2}v_2=0\text{ in }]0,T[\times\Omega,\\
    \vdots\\
    \Box_{K_n}v_n=0\text{ in }]0,T[\times\Omega,\\
    v_j=0\quad\text{ on }]0,T[\times\p\Omega,1\leq j\leq n,\\
    (v_1,\p_t v_1,\cdots,v_n,\p_t v_n)|_{t=0}=\mathbb{P}^*V(0)\in \K_0.
  \end{array}
\right.
\end{equation}
Using inequality \cref{eq:P-general ob}, the $\mathbb{P}-$exactly controllability of the system \cref{eq:control system of perp} is equivalent to proving the following observability inequality:
\begin{equation}\label{eq:ob-ineq}
C\int_0^T\int_{\omega}{|b_1\kappa_1v_1+\cdots+b_n\kappa_nv_n|}^2dxdt\geq ||\mathbb{P}^*V(0)||_{\K_0}^2,
\end{equation}
where $(v_1,\cdots,v_n)$ is the solution to the adjoint system \cref{eq:adjoint system perp}.

%Define $A=\left(
%            \begin{array}{cccc}
%              0 & -1 & 0& 0 \\
%              -divK_1\nabla & 0 & 0 & 0 \\
%              0 & 0 & 0 & -1 \\
%              0 & 0 &  -divK_2\nabla & 0 \\
%            \end{array}
%          \right)
%$
\subsubsection{Step 1: Establish a weak observability inequality}
First we want to prove a weak inequality:
\begin{equation}\label{eq:relaxed ob-inequality}
||\mathbb{P}^*V(0)||^2_{\K_0}\leq C\left(\int_0^T\int_{\omega}{|b_1\kappa_1v_1+\cdots+b_n\kappa_nv_n|}^2dxdt+||\mathbb{P}^*V(0)||^2_{\K_{-1}}\right),
\end{equation}
If the above inequality was false, we could get a sequence $(\mathbb{P}^*\widetilde{V}_0^k)_{k\in\N}$ such that
\begin{equation}\label{eq:normalized norm}
    ||\mathbb{P}^*\widetilde{V}_0^k||^2_{\K_0}=1,
\end{equation}

\begin{equation}\label{eq:norm-v1+v2}
\int_0^T\int_{\omega}|b_1\kappa_1v_1^k+\cdots+b_n\kappa_nv_n^k|^2dxdt\rightarrow0,k\rightarrow\infty,
\end{equation}
and \begin{equation}\label{eq:norm-V0}
||\mathbb{P}^*\widetilde{V}_0^k||^2_{\K_{-1}}\rightarrow0,k\rightarrow\infty.
\end{equation}
Here we use $v^k_i(1\leq i\leq n)$ to denote the corresponding solution of the system \cref{eq:adjoint system perp} with the initial data $\mathbb{P}^*\widetilde{V}_0^k$. Hence, we obtain $n$ bounded sequences $\{v^k_i\}_{k\in\N}(1\leq i\leq n)$. Let $\mu_i$ be the defect measure associated  to the sequence $\{v^k_i\}_{k\in\N}$,  by the construction in \Cref{sec:Microlocal defect measure}. Notice that in these constructions,  each sequence $\{v^k_i\}_{k\in\N}$ is solution to a particular wave equation 
$$ \Box_{K_i} v^k_i =0, v^k_i \mid _{\partial \Omega} =0
$$ 
and in \Cref{sec:Geometric Preliminaries} this corresponds to different principal symbols $p_i$, different sets $\mathcal{G}_i, \mathcal{H}_i, \mathcal{E}_i$ and different generalised bicharacteristic $\gamma_i$.

From the definition of the measures,  we obtain 
\begin{equation*}
    \forall A\in\A,\quad \langle\mu_i,\sigma(A)\rangle=\lim_{k\rightarrow\infty}(A\underline{v}_i^k,\underline{v}_i^k)_{L^2},
\end{equation*}
where $\underline{v}_i^k$ is the extension by $0$ across the boundary of $\Omega$. From \Cref{thm:singular lemma} we have 
\begin{lem}\label{thm:propagation lemma for measures}
Each measure $\mu_i$ is supported on the characteristic manifold
$$\text{Char($p_i$)} = \{ (t,x,\tau,\xi) \in T^*\mathbb{R} \times \mathbb{R}^d\mid_{\overline{\Omega}}; \tau^2 = {}^t\xi K_i(x) \xi \} $$
and is invariant along the generalised bicharacteristic flow associated to the symbol $p_i= ^t \xi K_i(x) \xi - \tau^2$
\end{lem}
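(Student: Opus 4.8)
The statement asserts that each microlocal defect measure $\mu_i$ associated to the sequence $\{v_i^k\}_{k\in\N}$ is carried by $\mathrm{Char}(p_i)$ and is invariant under the generalised bicharacteristic flow of $p_i$. The plan is to check that, for each fixed $i$, the sequence $\{v_i^k\}$ satisfies all the hypotheses needed to invoke the abstract machinery of \cref{thm:existence of the measure} and \cref{thm:singular lemma}, and then simply quote those results with $K$ replaced by $K_i$.

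First I would verify the boundedness and weak-convergence hypotheses. From \cref{eq:normalized norm} we have $\|\mathbb{P}^*\widetilde V_0^k\|_{\K_0}=1$, so by the conservation of energy for the system \cref{eq:adjoint system perp} each $v_i^k$ is bounded in $C^0([0,T];L^2(\Omega))$, uniformly in $k$; hence $\{v_i^k\}$ is bounded in $L^2_{loc}(\R_t;L^2(\Omega))$. Passing to a subsequence (diagonal extraction over $i=1,\dots,n$) we may assume $v_i^k\rightharpoonup v_i$ weakly; after translating the data by $v_i$ — equivalently, using that \cref{eq:norm-V0} forces the weak limit of the data to be $0$ in $\K_{-1}$, so the weak limit $v_i$ of the solutions vanishes — we may assume $v_i^k\rightharpoonup 0$. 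Moreover $v_i^k$ solves $\Box_{K_i}v_i^k=0$ exactly, so in particular $P_i v_i^k = 0 = o(1)_{H^{-1}}$ in the notation of \Cref{sec:Microlocal defect measure}, and $v_i^k\mid_{\partial M}=0$. Thus the hypotheses preceding \cref{thm:existence of the measure} hold verbatim for $\{v_i^k\}$ with the operator $P=P_i$ whose principal symbol is $p_i = {}^t\xi K_i(x)\xi - \tau^2$, and \cref{thm:existence of the measure} yields the measure $\mu_i\in\M^+$ (after a further subsequence).

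Then \cref{thm:singular lemma} applies directly to $\mu_i$: its first bullet gives $\mathrm{supp}(\mu_i)\subset\{\tau^2={}^t\xi K_i(x)\xi,\ x\in\overline\Omega\}=\mathrm{Char}(p_i)$; its second bullet gives $\mu_i(\h_i)=0$, so that — as noted in the remark following \cref{thm:singular lemma} — the generalised bicharacteristic flow of $p_i$ is well defined $\mu_i$-almost everywhere despite the discontinuity at hyperbolic reflection points; and its third bullet gives invariance of $\mu_i$ along that flow. This uses the standing assumption that $\Omega$ has no infinite order of tangential contact on the boundary, which guarantees the Melrose--Sj\"ostrand flow of each $p_i$ is globally well defined, as recalled in \Cref{sec:Geometric Preliminaries}.

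The only genuine point to keep straight is bookkeeping rather than a real obstacle: \cref{thm:existence of the measure} and \cref{thm:singular lemma} are stated for a single wave operator with one metric $K$, whereas here we have $n$ operators simultaneously. The resolution is the standard diagonal-subsequence argument — extract a subsequence so that $\mu_i$ exists for every $i=1,\dots,n$ at once — together with the observation that each geometric object ($p_i$, $\h_i$, $\G_i$, the flow $\gamma_i$) is attached to its own metric $K_i$, exactly as already flagged in the paragraph preceding the lemma. No interaction between the different $i$ enters at this stage; the coupling of the system is only through the boundary-control term, which is absent in the homogeneous adjoint equations, so each $\mu_i$ is analysed in complete isolation.
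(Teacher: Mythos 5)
Your proposal is correct and follows essentially the same route as the paper: the paper's proof consists precisely of invoking \cref{thm:existence of the measure} and \cref{thm:singular lemma} for each sequence $\{v_i^k\}$ separately, with the symbol, the sets $\G_i,\h_i,\E_i$ and the flow attached to the metric $K_i$, exactly as you do. Your additional verifications (energy bound, weak limit zero via \cref{eq:norm-V0}, diagonal subsequence extraction) are details the paper leaves implicit, and they are handled correctly.
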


\begin{lem}\label{thm:singular measure}
The measures $\mu_{i}$ and $\mu_{l}$ are mutually singular in $]0,T[\times\omega$, for $i\neq l$. 
\end{lem}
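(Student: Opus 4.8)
The plan is to read the mutual singularity straight off the supports of the two measures, using \cref{thm:propagation lemma for measures}. That lemma gives that $\mu_i$ is carried by the characteristic set $\mathrm{Char}(p_i)=\{(t,x,\tau,\xi):x\in\overline{\Omega},\ \tau^2={}^t\xi K_i(x)\xi\}$ and $\mu_l$ by $\mathrm{Char}(p_l)$. Moreover, by the construction in \Cref{sec:Microlocal defect measure} the defect measures are tested only against principal symbols of order-$0$ operators, i.e. against functions on the cosphere bundle $S^*(\R_t\times\R^d)$; hence $\mu_i$ and $\mu_l$ assign no mass to the zero section $Z=\{\xi=0,\ \tau=0\}$. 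So it will suffice to check that, over $]0,T[\times\omega$, the sets $\mathrm{Char}(p_i)$ and $\mathrm{Char}(p_l)$ meet only inside $Z$.

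For this I would take a point $(t,x,\tau,\xi)$ with $t\in\,]0,T[$ and $x\in\omega$ lying in $\mathrm{Char}(p_i)\cap\mathrm{Char}(p_l)$. Then ${}^t\xi K_i(x)\xi=\tau^2={}^t\xi K_l(x)\xi$, so ${}^t\xi\bigl(K_i(x)-K_l(x)\bigr)\xi=0$. By Assumption \eqref{A3} of \cref{thm:except finte dim} (after exchanging $i$ and $l$ if necessary) one has $K_i>K_l$ in $\omega$, so that $K_i(x)-K_l(x)$ is positive definite at $x\in\omega$; this forces $\xi=0$, and then $\tau^2={}^t\xi K_i(x)\xi=0$, i.e. $\tau=0$. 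Hence the common point lies on $Z$.

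To conclude, I would restrict $\mu_i$ and $\mu_l$ to the cotangent bundle over the open set $]0,T[\times\omega$: the first is then carried by the Borel set $\mathrm{Char}(p_i)\setminus Z$ over $]0,T[\times\omega$, the second by $\mathrm{Char}(p_l)\setminus Z$ over $]0,T[\times\omega$, and the previous step shows these two sets are disjoint. This is exactly the statement that $\mu_i$ and $\mu_l$ are mutually singular on $]0,T[\times\omega$. I do not expect a genuine obstacle here: the only step carrying content is that the strict ordering of the $K_i$ on $\omega$ makes $K_i-K_l$ definite and thereby separates the two characteristic cones; the only point requiring a little care is the (standard) fact that these measures live away from the zero section, which is built into their construction.
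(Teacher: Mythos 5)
Your proof is correct and takes essentially the same route as the paper, which likewise deduces the lemma directly from \cref{thm:propagation lemma for measures} and the strict ordering of the metrics on $\omega$, making $\mathrm{Char}(p_i)$ and $\mathrm{Char}(p_l)$ disjoint over $]0,T[\times\omega$. Your extra care about the zero section is a detail the paper leaves implicit (the measures live on the cosphere bundle, so the intersection at $\xi=\tau=0$ carries no mass), and it does not change the argument.
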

\begin{rem}
We recall that two measures $\mu$ and $\nu$ are singular if there exists a measurable set $A$ such that $\mu(A)=0$ and $\nu(A^c)=0$.
\end{rem}
\begin{proof}
This follows easily from ~\cref{thm:propagation lemma for measures} and the assumption~\ref{hyp2} in~\cref{thm:thm of varying metric}, which implies that over $\omega$, the two characteristic manifolds $ \text{Char}(p_i)$ and $\text{Char}(p_l)$ are disjoint.
\end{proof}

\begin{lem}\label{thm:mixed term lemma}
For $A\in\mathcal{A}$ with the compact support in $]0,T[\times\omega$, we obtain that for $i\neq l$:
\begin{equation}\label{eq:estimates for mixed terms}
    \limsup_{k\rightarrow\infty}|(A\underline{v}^k_i,\underline{v}^k_l)_{L^2}|=0.
\end{equation}
\end{lem}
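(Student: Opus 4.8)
The plan is to exploit the fact, established in the proof of Lemma \ref{thm:singular measure}, that for $i \neq l$ the characteristic manifolds $\mathrm{Char}(p_i)$ and $\mathrm{Char}(p_l)$ are disjoint over $\omega$, together with the standard fact that a bounded sequence solving $\Box_{K_i} v_i^k = 0$ is microlocally concentrated on $\mathrm{Char}(p_i)$ in the sense that $\mathrm{Op}(a) \underline{v}_i^k = o(1)_{L^2}$ whenever the symbol $a$ is supported away from $\mathrm{Char}(p_i)$. First I would fix a symbol cutoff: since $A$ has compact support in $]0,T[\times\omega$ and on that set $\{p_i = 0\}$ and $\{p_l = 0\}$ are disjoint closed conic sets, I can choose $\chi_i, \chi_l \in S^0$, homogeneous of degree $0$, with compact support in $]0,T[\times\omega$, such that $\chi_i \equiv 1$ near $\mathrm{supp}(A) \cap \mathrm{Char}(p_i)$, $\chi_l \equiv 1$ near $\mathrm{supp}(A)\cap\mathrm{Char}(p_l)$, and $\mathrm{supp}(\chi_i) \cap \mathrm{supp}(\chi_l) = \emptyset$ (possible precisely by disjointness of the two characteristic sets over $\omega$).

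The next step is the elliptic-type decomposition. Write $A = \mathrm{Op}(\chi_i) A + (\mathrm{Id} - \mathrm{Op}(\chi_i)) A$. The second term has symbol vanishing near $\mathrm{Char}(p_i)$, so by elliptic regularity for $\Box_{K_i}$ — more precisely, since $\Box_{K_i} \underline{v}_i^k = o(1)$ in $H^{-1}$ away from the boundary and microlocally one has a parametrix off the characteristic set — we get $(\mathrm{Id} - \mathrm{Op}(\chi_i)) A \underline{v}_i^k = o(1)_{L^2}$; combined with boundedness of $\underline{v}_l^k$ in $L^2$, that contribution to the inner product tends to $0$. Hence $\limsup_k |(A\underline{v}_i^k, \underline{v}_l^k)_{L^2}| = \limsup_k |(\mathrm{Op}(\chi_i) A \underline{v}_i^k, \underline{v}_l^k)_{L^2}| = \limsup_k |(A \underline{v}_i^k, \mathrm{Op}(\chi_i)^* \underline{v}_l^k)_{L^2}|$, up to $o(1)$ terms coming from the symbol calculus. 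Now $\mathrm{Op}(\chi_i)^*$ has symbol supported in $\mathrm{supp}(\chi_i)$, which is disjoint from $\mathrm{Char}(p_l)$; applying the same elliptic argument to $\underline{v}_l^k$ (which solves $\Box_{K_l}\underline{v}_l^k = o(1)_{H^{-1}}$) gives $\mathrm{Op}(\chi_i)^* \underline{v}_l^k = o(1)_{L^2}$, and since $A\underline{v}_i^k$ is bounded in $L^2$ the whole inner product is $o(1)$.

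The main subtlety, and the point deserving care, is the role of the boundary: the sequences $v_i^k$ only solve the wave equations in the interior, and $\underline{v}_i^k$ (the extension by $0$) picks up a distributional contribution on $\partial\Omega$, so $\Box_{K_i}\underline{v}_i^k$ is not $o(1)_{H^{-1}}$ globally. However, since $A$ is compactly supported in $]0,T[\times\omega$ with $\omega \subset\Omega$ open, everything takes place strictly in the interior, away from $\partial\Omega\times ]0,T[$; on a fixed neighbourhood of $\mathrm{supp}(A)$ contained in the interior we do have $\Box_{K_i} v_i^k = 0$ exactly, so the interior microlocal elliptic estimate applies without boundary corrections. (Alternatively, one observes directly that the off-diagonal measure is controlled by $\mu_i$ and $\mu_l$, which are mutually singular by Lemma \ref{thm:singular measure}, so the bilinear limit — which would be absolutely continuous with respect to both — must vanish; this is the conceptual reason, and the argument above is its quantitative implementation.) No GCC or propagation is needed here: the statement is purely local in $]0,T[\times\omega$ and follows from disjointness of the characteristic sets alone.
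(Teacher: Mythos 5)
Your proof is correct and follows essentially the same route as the paper: a microlocal cutoff separating the two characteristic sets, which are disjoint over $]0,T[\times\omega$, a decomposition of $A$ into two pieces, one killed by the concentration of $\underline{v}^k_i$ on $\mathrm{Char}(p_i)$ and the other, after passing to the adjoint, by the concentration of $\underline{v}^k_l$ on $\mathrm{Char}(p_l)$ (the paper justifies both vanishing statements via the support property of the measures $\mu_i,\mu_l$ rather than via a microlocal parametrix, which is the same mechanism). Just state the construction so that $\mathrm{supp}(\chi_i)\cap\mathrm{Char}(p_l)=\emptyset$ directly (as with the paper's $\beta_i$), since disjointness from $\mathrm{supp}(\chi_l)$ alone does not by itself yield the property you invoke when concluding $\Op(\chi_i)^*\underline{v}^k_l=o(1)_{L^2}$; this is easily arranged and is only a slip in wording.
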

\begin{proof}
For $\forall(t,x)\in]0,T[\times\omega$, we have that
\begin{equation*}
    \text{Char($p_i$)}\cap\text{Char($p_l$)}=\{0\},i\neq l.
\end{equation*}
Then we choose a cut-off function $\beta_i\in C^{\infty}(T^*\mathbb{R} \times \mathbb{R}^d)$ homogeneous of degree $0$ for $|(\tau, \xi)|\geq 1$,   with compact support in $]0,T[\times\omega$ such that
\begin{equation*}
    \beta_i|_{\text{Char($p_i$)}}=1, \beta_i|_{\text{Char($p_l$)}}=0, \text{ and }0\leq\beta_i\leq1.
\end{equation*}
Since $A\in\mathcal{A}$ with the compact support in $]0,T[\times\omega$, for some $\varphi\in C^{\infty}_0(]0,T[\times\omega)$, we have that $A=\varphi A\varphi$. We choose $\Tilde{\varphi}\in C^{\infty}_0(]0,T[\times\omega)$ such that $\Tilde{\varphi}|_{\text{supp} ( \varphi)}=1$ i.e, $\Tilde{\varphi}\varphi=\varphi$. Now let us consider the $(A\underline{v}^k_i,\underline{v}^k_l)_{L^2}$. First, we have that
\begin{equation*}
\begin{aligned}
    (A\underline{v}^k_i,\underline{v}^k_l)_{L^2} &=(\varphi A\varphi\underline{v}^k_i,\underline{v}^k_l)_{L^2} \\
    &=(\varphi A\varphi\underline{v}^k_i,\Tilde{\varphi}\underline{v}^k_l)_{L^2}\\
    &=((1-\Op{(\beta_i)})\varphi A\varphi\underline{v}^k_i,\Tilde{\varphi}\underline{v}^k_l)_{L^2}+(\Op{(\beta_i)}\varphi A\varphi\underline{v}^k_i,\Tilde{\varphi}\underline{v}^k_l)_{L^2}.
\end{aligned}
\end{equation*}
For the first term $((1-\Op(\beta_i))\varphi A\varphi\underline{v}^k_i,\Tilde{\varphi}\underline{v}^k_l)_{L^2}$, by the Cauchy-Schwarz inequality, therefore we obtain that
\begin{equation*}
  |((1-\Op(\beta_i))\varphi A\varphi\underline{v}^k_i,\Tilde{\varphi}\underline{v}^k_l)_{L^2}|\leq
  ||(1-\Op(\beta_i))\varphi A\varphi\underline{v}^k_i||_{L^2}||\Tilde{\varphi}\underline{v}^k_l||_{L^2}
  %((1-\Op(\beta_i))\varphi A\varphi\underline{v}^k_i,(1-\Op(\beta_i))\varphi A\varphi\underline{v}^k_i)^{\frac{1}{2}}_{L^2}(\Tilde{\varphi}\underline{v}^k_l,\Tilde{\varphi}\underline{v}^k_l)^{\frac{1}{2}}_{L^2}
\end{equation*}
As we know that $\{\underline{v}^k_l\}$ is bounded in $L^2_{loc}(\R_t\times\R^d)$, there exists a constant $C$ such that
\begin{equation*}
    ||\Tilde{\varphi}\underline{v}^k_l||^2_{L^2}=(\Tilde{\varphi}\underline{v}^k_l,\Tilde{\varphi}\underline{v}^k_l)_{L^2}\leq C.
\end{equation*}
From the definition of the measure $\mu_i$,  we obtain
\begin{equation*}
\begin{aligned}
    \lim_{k\rightarrow\infty}||(1-\Op(\beta_i))\varphi A\varphi\underline{v}^k_i||_{L^2}^2&=\lim_{k\rightarrow\infty}((1-\Op(\beta_i))\varphi A\varphi\underline{v}^k_i,(1-\Op(\beta_i))\varphi A\varphi\underline{v}^k_i)_{L^2}\\
    &=\poscalr{\mu_i}{(1-\beta_i)^2\varphi^4|\sigma(A)|^2}.
\end{aligned}
\end{equation*}
From \Cref{thm:singular lemma}, we have that $\text{supp }(\mu_i)\subset\text{Char($p_i$)}$. In addition, by the choice of $\beta_i$, we know that $1-\beta_i\equiv0$ on $\text{supp }(\mu_i)$, which implies that $\poscalr{\mu_i}{(1-\beta_i)^2\varphi^4|\sigma(A)|^2}=0$. Hence, we obtain 
\begin{equation}\label{eq:l-bichar-part}
   \limsup_{k\rightarrow\infty}|((1-\Op(\beta_i))\varphi A\varphi\underline{v}^k_i,\Tilde{\varphi}\underline{v}^k_l)_{L^2}|=0.
\end{equation}
The other term $(\Op(\beta_i)\varphi A\varphi\underline{v}^k_i,\Tilde{\varphi}\underline{v}^k_l)_{L^2}=(\underline{v}^k_i, \varphi A^*\varphi\Op(\beta_i)^*\Tilde{\varphi}\underline{v}^k_l)_{L^2} $ is dealt with similarly by exchanging $i$ and $l$.
\end{proof}

Now let us come back to the proof of the weak observability inequality \cref{eq:relaxed ob-inequality}. By the assumption \cref{eq:norm-v1+v2}, We know that
\begin{equation*}
    \int_0^T\int_{\omega}|b_1\kappa_1v^k_1+\cdots+b_n\kappa_nv_n^k|^2dxdt\rightarrow0,
\end{equation*}
for $\chi\in C^{\infty}_0(\omega\times]0,T[)$, and we would like to obtain:
\begin{equation*}
    \sum_{1\leq i,l\leq n}\poscalr{\chi b_i\kappa_iv^k_i}{\chi b_l\kappa_lv^k_l}\rightarrow0,\text{ as }k\rightarrow\infty.
\end{equation*}
According to \Cref{thm:mixed term lemma}, we know that for $i\neq l$,
\begin{equation}\label{eq:explicit mixed terms}
    \limsup_{k\rightarrow\infty}|\poscalr{\chi b_i\kappa_iv^k_i}{\chi b_l\kappa_lv^k_l}|=0.
\end{equation}
As a consequence, we know that
\begin{equation}
    \limsup_{k\rightarrow\infty}\Sigma_{i=1}^n\poscalr{\chi b_i\kappa_iv^k_i}{\chi b_i\kappa_iv^k_i}=0.
\end{equation}
Using again the definition of the measure $\mu_i$, we obtain the following:
\begin{equation}
    0\leq \poscalr{\mu_i}{(\chi b_i \kappa_i)^2}=\lim_{k\rightarrow\infty}\poscalr{\chi b_i\kappa_iv^k_i}{\chi b_i\kappa_iv^k_i}\leq\limsup_{k\rightarrow\infty}\Sigma_{i=1}^n\poscalr{\chi b_i\kappa_iv^k_i}{\chi b_i\kappa_iv^k_i}=0.
\end{equation}

Thus, we know that
\begin{equation*}
    \mu_{i}|_{\omega\times]0,T[}=0.
\end{equation*}
Since $\mu_{i}$ is invariant along the general bicharacteristics of $p_{K_i}$ (by \Cref{thm:propagation lemma for measures}), combining with GCC, we know that $\mu_i\equiv0$. Since $\mu_{i}=0$, we have $v_i^k\rightarrow0$ strongly in $L^2_{loc}(]0,T[\times\Omega)$. Now we have to estimate $||\partial_t v_1^k(0)||_{H^{-1}}$. Let $\chi\in C^{\infty}_0(]0,T[)$. Multiply the equation 
\begin{equation*}
    \Box_{K_1}v_1=0
\end{equation*}
by $T_{K_1}(\chi^2 v_1^k)$ and then integrate on $]0,T[\times\Omega$. We obtain that
\begin{equation}
\begin{aligned}
    0&=\int_0^T\int_\Omega\Box_{K_1}v_1^k\cdot\overline{T_{K_1}(\chi^2 v_1^k)}\dif x\dif t\\
    &=\int_0^T\int_\Omega v_1^k\cdot\overline{(-\Delta_{K_1})T_{K_1}(\chi^2 v_1^k)}\dif x\dif t-\int_0^T\int_\Omega \p_tv_1\cdot\overline{T_{K_1}(\p_t(\chi^2) v_1^k)}\dif x\dif t\\
    &-\int_0^T||\chi\p_t v^k_1||^2_{H^{-1}}\\
    &=||\chi v^k_1||^2_{L^2}-\int_0^T||\chi\p_t v^k_1||^2_{H^{-1}}+\int_0^T\int_\Omega v_1^k\cdot\overline{T_{K_1}(\p^2_t(\chi^2) v_1^k+\p_t(\chi^2)\p_t v_1^k)}\dif x\dif t
\end{aligned}
\end{equation}
For the term $\int_0^T\int_\Omega v_1^k\cdot\overline{T_{K_1}(\p^2_t(\chi^2) v_1^k+\p_t(\chi^2)\p_t v_1^k)}\dif x\dif t$, we know that $v_1^k\rightarrow0$ strongly in $L^2_{loc}(]0,T[\times\Omega)$ and $T_{K_1}(\p^2_t(\chi^2) v_1^k+\p_t(\chi^2)\p_t v_1^k)$ is bounded in $L^2$. Thus, up to a subsequence, it tends to $0$ as $k\rightarrow\infty$. Hence, we obtain that:
\begin{equation*}
   \int_0^T||\chi\partial_t v_1^k||^2_{H^{-1}}\rightarrow0\text{, as }k\rightarrow\infty.
\end{equation*}
So for all $0<t_1<t_2<T$, 
$$\int_{t_1}^{t_2}||\partial_t v_1^k(t)||^2_{H^{-1}}dt\rightarrow0.$$
So for almost every $t\in]t_1,t_2[$, $||\partial_t v_1^k(t)||^2_{H^{-1}}+||v_1^k(t)||^2_{L^2}\rightarrow0$. Then by the backward well-posedness, we can conclude:
\begin{equation*}
    ||\partial_t v_1^k(0)||^2_{H^{-1}}+||v_1^k(0)||^2_{L^2}\rightarrow0.
\end{equation*}
The same reasoning holds for $v_j^k$, $2\leq j\leq n$. This gives a contradiction with \cref{eq:normalized norm}, which proves the weak observability inequality \cref{eq:relaxed ob-inequality}.

\subsubsection{Step 2: Descriptions of the space $E$}
Define 
\begin{equation*}
    \mathcal{N}(T)=\{\mathbb{P}^*V(0)\in\K_0:(b_1\kappa_1v_1+\cdots+b_n\kappa_nv_n)(t,x)=0,\text{ for }t\in]0,T[,x\in\omega\}.
\end{equation*}
\begin{lem}
$E=\mathcal{N}(T)$ where $E$ was defined in \cref{eq:vanishing space} and $E$ has a finite dimension.
\end{lem}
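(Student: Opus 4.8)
\emph{Proof plan.} I would establish the two assertions in turn. For $\dim E<\infty$, the claim to prove is that on $E$ the $\K_1$-norm is controlled by the a priori weaker $\K_0$-norm, i.e. $\|V^h(0)\|_{\K_1}\le C\|V^h(0)\|_{\K_0}$ for all $V^h(0)\in E$; since $H^1_0(\Omega)\hookrightarrow L^2(\Omega)$ and $L^2(\Omega)\hookrightarrow H^{-1}(\Omega)$ are compact, the embedding $\K_1\hookrightarrow\K_0$ is compact, so such an inequality makes the closed unit ball of $E$ compact and forces $\dim E<\infty$ by Riesz's lemma. To prove the inequality I would rerun the contradiction argument of Step~1: if it fails there is a sequence $V^h_k(0)\in E$ with $\|V^h_k(0)\|_{\K_1}=1$ and $\|V^h_k(0)\|_{\K_0}\to0$, so $V^h_k(0)\rightharpoonup0$ in $\K_1$; the corresponding solutions $v^k_i$ of \cref{eq:H-adjoint system perp} satisfy $b_1\kappa_1 v^k_1+\cdots+b_n\kappa_n v^k_n\equiv0$ on $]0,T[\times\omega$ by the definition of $E$, so \cref{thm:mixed term lemma} kills the mixed terms and each defect measure $\mu_i$ vanishes on $]0,T[\times\omega$; propagation along the generalised bicharacteristics (\cref{thm:propagation lemma for measures}) together with GCC then gives $\mu_i\equiv0$, hence $v^k_i\to0$ strongly in $L^2_{\mathrm{loc}}$, and exactly as at the end of Step~1 this yields $V^h_k(0)\to0$ in $\K_1$, contradicting $\|V^h_k(0)\|_{\K_1}=1$. (As in the remark after \cref{thm:singular lemma} one passes from the energy level to the $L^2$ setting by a time derivative.)

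\emph{The equality $E=\mathcal{N}(T)$.} Write $\mathcal N_0(T)$ for the set of $\K_0$-data whose solution of \cref{eq:H-adjoint system perp} satisfies $b_1\kappa_1 v_1+\cdots+b_n\kappa_n v_n=0$ on $]0,T[\times\omega$, so that $E=\mathcal N_0(T)\cap\K_1$ while $\mathcal{N}(T)$ is obtained from $\mathcal N_0(T)$ by applying $\mathbb{P}^*$. The key point is that $\mathcal N_0(T)$ is already contained in $\K_1$; granting this, $\mathcal N_0(T)=E$, and $\mathcal{N}(T)=E$ follows by unwinding the definition of $\mathbb{P}^*$. To see $\mathcal N_0(T)\subset\K_1$, first run the argument above for sequences merely bounded in $\K_0$ to get $\dim\mathcal N_0(T)<\infty$; then check that $\mathcal N_0(T)$ is invariant under $\mathscr{A}$ (for instance by differentiating the observation in $t$, so that $\mathscr{A}$ maps $\mathcal N_0(T)$ into itself, or by a dimension count using that $\mathcal N_0(S)$ is non-increasing in $S$). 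A finite-dimensional $\mathscr{A}$-invariant subspace is spanned by generalised eigenvectors of $\mathscr{A}$, whose components are built from eigenfunctions of the self-adjoint elliptic operators $-\Delta_{K_i}$ and are therefore $C^\infty$ up to the boundary; hence $\mathcal N_0(T)\subset\K_1$, which closes the argument and in particular gives $\dim E=\dim\mathcal N_0(T)<\infty$ again.

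\emph{Main obstacle.} The analytic core --- the vanishing of the defect measures, first on $]0,T[\times\omega$ and then everywhere by GCC --- is not new here: it is exactly the mechanism of Step~1, reused with the observation term zero by hypothesis. The genuinely delicate part is the soft but fiddly bookkeeping of the second step: upgrading ``the observation vanishes on the fixed interval $]0,T[$'' to invariance of $\mathcal N_0(T)$ under $\mathscr{A}$, handling the mismatch of function spaces along $\K_1\subset\K_0\subset\K_{-1}$, and checking that $\mathbb{P}^*$ does identify $\mathcal N_0(T)$ with $E$.
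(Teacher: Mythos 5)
Your first half (finite dimensionality) is workable, though heavier than what the paper does: the paper simply plugs data of $\mathcal{N}(T)$ into the already-proven weak inequality \cref{eq:relaxed ob-inequality}, where the observation term vanishes, obtaining $\|\cdot\|_{\K_0}\le C\|\cdot\|_{\K_{-1}}$ on $\mathcal{N}(T)$ and concluding by compactness of $\K_0\hookrightarrow\K_{-1}$; finite dimensionality of $E$ then follows from $E\subset\mathcal{N}(T)$. Your plan instead reruns the whole microlocal argument one level up ($\K_1$ versus $\K_0$); with the $\partial_t$-reduction you mention it does close (applying the end-of-Step-1 multiplier argument to $\partial_t v_i^k$ gives exactly the $H^1_0\times L^2$ decay), but it duplicates work the paper has already done.

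The genuine gap is in the second half, at the step ``$\mathcal{N}_0(T)$ is invariant under $\mathscr{A}$''. Your primary justification --- differentiate the observation in $t$ --- is circular: $\mathscr{A}$ is unbounded on $\K_0$ with domain $D(\mathscr{A})=\K_1$, so for $V(0)\in\mathcal{N}_0(T)\subset\K_0$ the vector $\mathscr{A}V(0)$ a priori lives only in $\K_{-1}$; differentiating the solution in time only shows $\mathscr{A}$ maps $\mathcal{N}_0(T)$ into the invisible set at the $\K_{-1}$ level, and bringing it back into $\K_0$ is precisely the regularity statement $\mathcal{N}_0(T)\subset\K_1$ you are trying to prove. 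Your fallback (``a dimension count using that $\mathcal{N}_0(S)$ is non-increasing in $S$'') is the right idea but is left as a gesture: one must first note that GCC (hence \cref{eq:H-inequality}) persists at time $T-\delta$, that the non-increasing finite dimensions force $\mathcal{N}_0(T)=\mathcal{N}_0(T-\delta)$ for small $\delta$, hence $e^{-\epsilon\mathscr{A}}\mathcal{N}_0(T)\subset\mathcal{N}_0(T-\epsilon)=\mathcal{N}_0(T)$, and then use that a strongly continuous semigroup restricted to a finite-dimensional invariant subspace is differentiable, giving $\mathcal{N}_0(T)\subset D(\mathscr{A})=\K_1$. This is exactly the content of the paper's proof, carried out there via the time-translates $e^{-\epsilon\mathscr{A}}\mathbb{P}^*V(0)\in\mathcal{N}(T-\delta)$ and the difference-quotient/resolvent limit $(\alpha+\mathscr{A})^{-1}\tfrac1\epsilon(Id-e^{-\epsilon\mathscr{A}})\mathbb{P}^*V(0)\to\mathscr{A}(\alpha+\mathscr{A})^{-1}\mathbb{P}^*V(0)$. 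Note also that once invariance is established this way you already have $\mathcal{N}_0(T)\subset D(\mathscr{A})\subset\K_1$, so your final appeal to generalised eigenvectors of $\mathscr{A}$ and elliptic regularity is superfluous.
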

\begin{proof}
According to the weak observability inequality \cref{eq:relaxed ob-inequality}, for $\mathbb{P}^*V(0)\in\mathcal{N}(T)$, we obtain that
\begin{equation}\label{eq:H-inequality}
||\mathbb{P}^*V(0)||^2_{\K_0}\leq C||\mathbb{P}^*V(0)||^2_{\K_{-1}}.
\end{equation}
We know that $\mathcal{N}(T)$ is a closed subspace of $\K_0$. By the compact embedding $\K_0\hookrightarrow \K_{-1}$, we know that $\mathcal{N}(T)$ has a finite dimension. By definition, we know that $E\subset\mathcal{N}(T)$. Hence, we obtain that $E$ has a finite dimension. Then we want to show that $E=\mathcal{N}(T)$. Define
\begin{equation*}
  \mathscr{A}=\left(
            \begin{array}{cccc}
              0 & -1 & \cdots& 0 \\
              -\Delta_{K_1}  & 0 & \cdots & 0 \\
              \vdots & \vdots & 0 & -1 \\
              0 & 0 &  -\Delta_{K_n} & 0 \\
            \end{array}
          \right)  .
\end{equation*}
Thus, the solution $(v_1,\p_t v_1,\cdots,v_n,\p_t v_n)^t$ can be written as 
\begin{equation*}
\left(
\begin{array}{c}
     v_1 \\
     \p_t v_1\\
     \vdots\\
     v_n\\
     \p_t v_n
\end{array}
\right)=e^{-t\mathscr{A}}\mathbb{P}^*V(0).
\end{equation*}
Since $\mathcal{N}(T)$ is of finite dimension, it is complete for any norm. Setting $\delta>0$, we know that \cref{eq:H-inequality} is still true for $\mathbb{P}^*V(0)\in\mathscr{N}(T-\delta)$. Taking $\mathbb{P}^*V(0)\in\mathscr{N}(T)$, for $\epsilon\in]0,\delta[$, we have $e^{-\epsilon\mathscr{A}}\mathbb{P}^*V(0)\in\mathscr{N}(T-\delta)$. For $\alpha$ large enough, as $\epsilon\rightarrow0^+$, 
\begin{equation*}
    (\alpha+\mathscr{A})^{-1}\frac{1}{\epsilon}(Id-e^{-\epsilon\mathscr{A}})\mathbb{P}^*V(0)\rightarrow\mathscr{A}(\alpha+\mathscr{A})^{-1}\mathbb{P}^*V(0).
\end{equation*}
As a consequence, we obtain $\mathscr{N}(T)\subset D(\mathscr{A})\subset\K_1$. Hence, we obtain that $E=\mathscr{N}(T)$ and has a finite dimension.
\end{proof}
\subsubsection{Step 3: Proof of the observability inequality \cref{eq:ob-ineq}}
If \cref{eq:ob-ineq} was false, we could find a sequence $\{\mathbb{P}^*V^k(0)\}_{k\in\N}\subset\K_0$ such that
\begin{equation}\label{eq:sequence}
||\mathbb{P}^*V^k(0)||_{\K_0}=1,\quad\int_0^T||b_1\kappa_1v^k_1+\cdots+b_n\kappa_nv_n^k||^2_{L^2(\omega)}dt\rightarrow0.
\end{equation}
First, we know that $\{\mathbb{P}^*V^k(0)^k\}_{k\in\N}$ is bounded in $\K_0=(L^2\times H^{-1})^n$. Hence, there exists a subsequence (also denoted by $\mathbb{P}^*V^k(0)$) weakly converging in $\K_0=(L^2\times H^{-1})^n$, to a limit which we denote with $\mathbb{P}^*V(0)$. We also know that $\mathbb{P}^*V(0)$ leads to a solution $(v_1,\cdots,v_n)$ of the system \cref{eq:adjoint system perp} and satisfies that $b_1\kappa_1v_1+\cdots+b_n\kappa_n v_n=0$ in $]0,T[\times\omega$. Thus, we know that $\mathbb{P}^*V(0)\in\mathscr{N}(T)=E$, which implies that $\mathbb{P}^*V(0)=0$. Since the embedding $\K_0\hookrightarrow \K_{-1}$ is compact, we obtain that $||\mathbb{P}^*V(0)^k||^2_{\K_{-1}}\rightarrow ||\mathbb{P}^*V(0)||^2_{\K_{-1}}$. From the weak observability inequality \cref{eq:relaxed ob-inequality}, we obtain:
\begin{equation*}
    1\leq C||\mathbb{P}^*V(0)||^2_{\K_{-1}},
\end{equation*}
which contradicts to the fact that $\mathbb{P}^*V(0)=0$. Then observability inequality \cref{eq:ob-ineq} follows. This concludes the proof of the $\mathbb{P}-$exact controllability of the system \cref{eq:control system of perp}.
\subsection{The Proof of \cref{thm:thm of varying metric}}
\label{sec:The proof of thm:thm of varying metric}
According to the proof above, we only need to show that $E^{\perp}=\{0\}$, which is equivalent to $\mathbb{P}^*=Id$. If we denote by $\Tilde{V}(t)$ the solution of 
\begin{equation*}
    \partial_t\Tilde{V}+\mathscr{A}\Tilde{V}=0,\Tilde{V}|_{t=0}=V(0),
\end{equation*}
then, $\mathscr{A}V(0)=-\partial_t\Tilde{V}|_{t=0}\in\mathscr{N}(T)$ provided that $V(0)\in\mathscr{N}(T)$. This implies that $\mathscr{A}\mathscr{N}(T)\subset\mathscr{N}(T)$. Since $\mathscr{N}(T)$ is a finite dimensional closed subspace of $D(\mathscr{A})$, and stable by the action of the operator $\mathscr{A}$, it contains an eigenfunction of $\mathscr{A}$. To be specific, there exists $(e_1,e_2,\cdots,e_n)\in\mathscr{N}(T)$ and $\lambda\in\C$ such that
\begin{equation*}
            \left(
            \begin{array}{cccc}
              0 & -1 & \cdots& 0 \\
              -\Delta_{K_1}  & 0 & \cdots & 0 \\
              \vdots & \vdots & 0 & -1 \\
              0 & 0 &  -\Delta_{K_n} & 0 \\
            \end{array}
          \right)
          \left(
                          \begin{array}{c}
                            e^0_1 \\
                            e^1_1 \\
                           \vdots\\
                            e^0_n \\
                            e^1_n\\
                          \end{array}
                        \right)
                        =\lambda\left(
                          \begin{array}{c}
                            e^0_1 \\
                            e^1_1 \\
                           \vdots\\
                            e^0_n \\
                            e^1_n\\
                          \end{array}
                        \right).
\end{equation*}
It is equivalent to the following system:
\begin{equation}
\left\{
  \begin{array}{l}
    -e^1_1=\lambda e^0_1\text{ in }\Omega,\\
    -\Delta_{K_1}e^0_1=\lambda e^1_1\text{ in }\Omega, \\
\cdots\\
-e^1_n=\lambda e^0_n\text{ in }\Omega,\\
 -\Delta_{K_n}e^0_n=\lambda e^1_n\text{ in }\Omega,\\
    b_1\kappa_1e^0_1+\cdots+b_n\kappa_n e^0_n=0,\text{ in } \omega. \\
  \end{array}
\right.
\end{equation}
We can simplify this into 
\begin{equation*}
\left\{
  \begin{array}{l}
    \Delta_{K_1}e^0_1=\lambda^2e^0_1\text{ in }\Omega,\\
    \Delta_{K_2}e^0_2=\lambda^2e^0_2\text{ in }\Omega,\\
\cdots\\
 \Delta_{K_n}e^0_n=\lambda^2e^0_n\text{ in }\Omega,\\
    b_1\kappa_1e^0_1+\cdots+b_n\kappa_ne^0_n=0\text{ in } \omega,\\
  \end{array}
\right.
\end{equation*}

Since the system satisfies the unique continuation of eigenfunctions, we know that $e^0_1=\cdots=e^0_n=0$ in $\Omega$, which implies that $E=\mathcal{N}(T)=\{0\}$. Hence, from \cref{eq:ob-ineq} with $\mathbb{P}^*=Id$, we obtain the observability inequality
\begin{equation*}
    C\int_0^T\int_{\omega}{|b_1\kappa_1v_1+\cdots+b_n\kappa_nv_n|}^2dxdt\geq ||V(0)||_{\K_0}^2.
\end{equation*}
This concludes the proof of \cref{thm:thm of varying metric}.

\section{Unique continuation of eigenfunctions}\label{sec:Unique continuation of eigenfunctions}
\subsection{A counterexample}\label{sec:A counterexample}
%When we prove the unique continuation result, we use the analyticity of the matrices $K_1$ and $K_2$. We can use the following example to show it is quite natural to assume the analyticity. 
First, we construct an example to show that the conditions in \cref{thm:except finte dim} are not sufficient to ensure the unique continuation of eigenfunctions. Now, let us focus on the unique continuation problem in dimension $1$. We consider a smooth metric in dimension 1, $g=c(x)dx^2$. Then we can define the Laplace-Beltrami operator in the sense:
\begin{equation}
\begin{aligned}
     \Delta_g&=\frac{1}{\sqrt{\det(g)}}\frac{d}{dx}(\sqrt{\det(g)}g^{-1}\frac{d}{dx})\\
     &=\frac{1}{c}\frac{d^2}{dx^2}-\frac{c'}{2c^2}\frac{d}{dx}
\end{aligned}
\end{equation}
Fix the open interval $]0,\pi[$ and the subinterval $]a,b[\subset ]0,\pi[$($a>\frac{\pi}{2}$). Now we consider the unique continuation problem:
\begin{equation}\label{eq:UC}
\left\{
\begin{array}{cc}
     u''_1=-\lambda^2u_1,  \\
     \Delta_gu_2=-\lambda^2u_2,\\
     u_1+u_2=0 \text{ in }]a,b[,\\
     u_1,u_2\in H^1_0(]0,\pi[).
\end{array}
\right.
\end{equation}
%Now we choose $\lambda=1$ and $\chi(x)\in C^{\infty}(\R)$.
In general, the unique continuation of eigenfunctions does not hold.
\begin{thm}\label{thm:counterexample}
There exists a smooth Riemannian metric $g=c(x)dx^2$, and two eigenfunctions $u_1$, $u_2$ of $\Delta_g$ and $\frac{d^2}{dx^2}$ on $]0,\pi[$ associated with eigenvalue $1$ such that $u_1+u_2=0$, in $]a,b[\subset ]0,\pi[$ and $u_1+u_2\not\equiv0$ in $]0,\pi[$.
\end{thm}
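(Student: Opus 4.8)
The plan is to construct the metric $g = c(x)\,dx^2$ by prescribing, on the subinterval $]a,b[$, what the eigenfunction equation for $\Delta_g$ should look like, and then extending $c$ to all of $]0,\pi[$ so that the resulting eigenfunction still satisfies the Dirichlet boundary conditions at $0$ and $\pi$. I would start from the standard eigenfunction $u_1(x) = \sin x$ of $d^2/dx^2$ on $]0,\pi[$ with eigenvalue $1$; this forces $u_2 = -\sin x$ on $]a,b[$. The key observation is that the eigenvalue equation $\Delta_g u_2 = -u_2$ is a second-order ODE whose coefficients depend on $c$ and $c'$; requiring a \emph{prescribed} nonzero function (namely $-\sin x$) to solve it on $]a,b[$ is one scalar ODE constraint on $c$ over that interval, which one can satisfy by solving for $c$ (it becomes a first-order ODE for $c$ once $u_2$ is fixed, or one can use the Liouville/Sturm transformation to normal form). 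So on $]a,b[$ I can choose $c$ freely subject to one solvable differential relation, and in particular I can arrange $c \not\equiv 1$ there — which is what will eventually make $u_1 + u_2 \not\equiv 0$ globally, since a genuinely different metric will produce a genuinely different eigenfunction once we leave $]a,b[$.

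Next I would extend $c$ smoothly from $]a,b[$ to all of $]0,\pi[$, keeping $c$ positive and smooth, with enough freedom near the endpoints. Having fixed $c$ globally, $u_1 = \sin x$ is automatically a Dirichlet eigenfunction of $d^2/dx^2$; the real work is to produce $u_2 \in H^1_0(]0,\pi[)$ solving $\Delta_g u_2 = -u_2$ with $u_2 = -\sin x$ on $]a,b[$. Here I would use a Liouville change of variables $y = \int_0^x \sqrt{c(s)}\,ds$ (the geodesic/arclength coordinate) together with the standard substitution that turns $\Delta_g$ into $-d^2/dy^2$ plus a zeroth-order potential, reducing the problem to a one-dimensional Schrödinger eigenvalue problem on an interval $]0,L[$ with $L = \int_0^\pi \sqrt{c}$. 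A solution $u_2$ of the ODE on $]a,b[$ extends uniquely as a solution of the same linear second-order ODE on all of $]0,\pi[$ (this is just ODE existence/uniqueness, no boundary condition yet). The genuine constraint is then: \emph{this uniquely-extended solution must vanish at $x=0$ and $x=\pi$}. This is where the remaining free parameters in the extension of $c$ — and, crucially, the free choice of the eigenvalue, or equivalently a scaling/reparametrization degree of freedom — must be spent: I would argue that the two conditions $u_2(0) = 0$, $u_2(\pi) = 0$ can be met by a shooting/continuity argument, adjusting the length $L$ of the transformed interval (via rescaling $c$ by a constant, which does not affect the qualitative picture on $]a,b[$) so that the prescribed local data $-\sin x$ on $]a,b[$ sits inside a global Dirichlet eigenfunction. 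Equivalently, one may fix the behavior of $c$ on $]0,a]$ and $[b,\pi[$ to be, say, the constant $1$ outside a slightly larger interval, so that the extended $u_2$ is a combination of $\sin(\lambda y)$ and $\cos(\lambda y)$ near each endpoint, and then solve the two linear conditions by choosing the two endpoint "phase" parameters appropriately.

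Finally, once $u_2 \in H^1_0(]0,\pi[)$ is an eigenfunction of $\Delta_g$ with eigenvalue $1$ agreeing with $-u_1$ on $]a,b[$, it remains to check that $u_1 + u_2 \not\equiv 0$ on $]0,\pi[$. This follows because if $u_1 + u_2$ vanished identically, then $u_2 = -\sin x$ everywhere, so $-\sin x$ would solve $\Delta_g(-\sin x) = -(-\sin x)$ on all of $]0,\pi[$; expanding $\Delta_g(\sin x) = \frac{1}{c}(-\sin x) - \frac{c'}{2c^2}\cos x$ and setting this equal to $-\sin x$ forces $c \equiv 1$ (look at the equation pointwise: the $\cos x$ term forces $c' = 0$, hence $c$ constant, and then the $\sin x$ term forces $c = 1$), contradicting our construction of $c \not\equiv 1$ on $]a,b[$. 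I expect the main obstacle to be the middle step: carefully engineering the extension of $c$ so that the ODE solution $u_2$ determined by the local data on $]a,b[$ genuinely lands in $H^1_0$, i.e. hits zero at both endpoints — this is a nonlinear matching/shooting problem and getting a clean, verifiably-smooth construction (rather than a mere existence argument) is the delicate part; the constraint $a > \pi/2$ presumably enters precisely here, leaving room on the left of $]a,b[$ to absorb one endpoint condition while the interval $]0,\pi[$ still has the right total "length budget" for the eigenvalue to be exactly $1$.
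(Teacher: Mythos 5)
Your overall strategy (prescribe $u_2=-\sin x$ on $]a,b[$, solve the resulting first-order compatibility equation for $c$ there, then extend $c$ and enforce the Dirichlet conditions by a phase/shooting argument) is genuinely different from the paper's and could be completed, but as written it has two concrete gaps. First, your concluding argument that $u_1+u_2\not\equiv0$ is invalid: from $\frac{1}{c}(-\sin x)-\frac{c'}{2c^2}\cos x=-\sin x$ you cannot ``equate the $\sin x$ and $\cos x$ coefficients'', since $c$ and $c'$ are functions of $x$, not constants; the identity is exactly the ODE $c'=2c(c-1)\tan x$, which has many non-constant positive solutions --- indeed your own $c$ on $]a,b[$ is one of them, which is precisely why $u_1+u_2=0$ there is compatible with $c\not\equiv1$. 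To exclude $u_2\equiv-\sin x$ globally you need an actual argument: for instance, evaluating the eigen-equation at $x=\pi/2$ forces $c(\pi/2)=1$, and the only positive solution of the ODE on $]\pi/2,\pi[$ with limit $1$ at $\pi/2$ is $c\equiv1$ (the general solution is $c=\cos^2x/(\cos^2x-C)$, which tends to $0$ unless $C=0$), contradicting $c\not\equiv1$ on $]a,b[$; or, more simply, note that the first integral $A^2=u_2^2+c^{-1}(u_2')^2$, computed on $]a,b[$ where $c<1$ and $\cos x\neq0$, gives $A>1$, while $\sup|\sin x|=1$. Second, the matching step --- which you correctly identify as the heart of the construction --- is only sketched, and one of your two proposed devices fails: rescaling $c$ by a constant rescales $\Delta_g$ and therefore moves the eigenvalue away from $1$, which the statement fixes for both operators. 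The sketch can be made rigorous because in dimension one the change of variables $y=\int_0^x\sqrt{c}$ conjugates $\Delta_g$ exactly to $d^2/dy^2$ (there is no zeroth-order potential, contrary to what you suggest), so $u_2=A\sin(y+\phi)$ globally and the two Dirichlet conditions reduce to $\phi\in\pi\Z$ and $\int_0^\pi\sqrt{c}\,dx\in\pi\Z$, which you can arrange by tuning $\int_0^a\sqrt{c}$ and $\int_b^\pi\sqrt{c}$ through the free extension of $c$. Note also that $a>\pi/2$ is already needed in your first step (the compatibility ODE is singular at $\pi/2$ and admits no positive non-constant solution across it), not merely for the ``length budget''.

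For comparison, the paper avoids any shooting: it prescribes the second eigenfunction globally as $u_2=-\chi(x)\sin x$, with $\chi$ a smooth bump equal to $1$ on $]a,b[$, vanishing at $0,\pi$, and with maximum $K>1$, and then defines the metric by the explicit first-integral formula $c=(\chi'\sin x+\chi\cos x)^2/(K^2-\chi^2\sin^2x)$. The Dirichlet conditions are then built in, $u_1+u_2=(1-\chi)\sin x\not\equiv0$ is immediate, and the only delicate point is the positivity and smoothness of this $c$ (a $0/0$ analysis at $x=\pi/2$), whereas in your route the work is transferred to the boundary matching and to a correct non-degeneracy argument at the end.
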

\begin{proof}Let $\chi\in C^{\infty}(\R)$ satisfying the following conditions:
\begin{enumerate}
    \item $\chi(0)=\chi(\pi)=0$;
    \item $0<\chi\leq K$ on $]0,\pi[$ and $\chi(\frac{\pi}{2})=K>1$;
    \item $\chi(x)=1,\forall x\in ]a,b[$;
    \item $\chi'(x)>0$ for $x\in[0,\frac{\pi}{2}[$, $\chi'(x)<0$ for $x\in]b,\pi]$ and $\chi'(x)<0$ for $x\in]\frac{\pi}{2},a[$
\end{enumerate}
Define $u_2(x)=-\chi(x)\sin{x}$. Hence, we obtain $u_2(x)=-\sin{x}$ on $]a,b[$ and \\$u_2'(x)=-\chi'(x)\sin{x}-\chi(x)\cos{x}$. Then we define $c(x)$ by
\begin{equation}
    c(x)=\frac{(\chi'(x)\sin{x}+\chi(x)\cos{x})^2}{K^2-\chi^2\sin^2{x}},
\end{equation}
with a constant $K>1$.
It is easy to check that $c\geq0$. Since we want $g$ to be a Riemannian metric, we need $c>0$. Let us discuss in different cases, 
\begin{enumerate}
    \item if $x\in ]0,\frac{\pi}{2}[$, we know that $\chi'(x)>0$, $\chi(x)>0$. Hence, we have $\chi'(x)\sin{x}+\chi(x)\cos{x}>0$;
    \item if $x\in[a,b]$, $\chi'(x)=0$, $\chi(x)=1$, we obtain $\chi'(x)\sin{x}+\chi(x)\cos{x}=\cos{x}<0$ since $a>\frac{\pi}{2}$;
    \item if $x\in]b,\pi[$, we know that $\chi'(x)<0$, $\chi(x)>0$. Hence, we have $\chi'(x)\sin{x}+\chi(x)\cos{x}<0$;
    \item if $x\in ]\frac{\pi}{2},a[$, we know that $\chi'(x)<0$, $\chi(x)>0$. Hence, we have $\chi'(x)\sin{x}+\chi(x)\cos{x}<0$;
    \item if $x=\frac{\pi}{2}$, $\chi'(\frac{\pi}{2})=0$, $c(\frac{\pi}{2})=1-\frac{\chi''(\frac{\pi}{2})}{K}\geq1$.
\end{enumerate}
So we can conclude that $c>0$ and $g$ is a Riemannian metric.

We want to show that $c$ is $C^{\infty}$ near $\frac{\pi}{2}$. Let $f(x)=(\chi'(x)\sin{x}+\chi(x)\cos{x})^2$ and $g(x)=K^2-\chi^2\sin^2{x}$, then we obtain $c(x)=\frac{f}{g}$. We claim that there exist $\tilde{f},\tilde{g}\in C^{\infty}$ and $\tilde{f}(\frac{\pi}{2})\neq0$, $\tilde{g}(\frac{\pi}{2})\neq0$ such that $f(x)=(x-\frac{\pi}{2})^2\tilde{f}(x)$ and $g(x)=(x-\frac{\pi}{2})^2\tilde{g}(x)$. We just use the Taylor expansion of $\chi$, $\chi'$, $\sin$ and $\cos$:
\begin{equation}
\begin{aligned}
\chi(x)&=K+\frac{1}{2}\chi''(\frac{\pi}{2})(x-\frac{\pi}{2})^2+R_1(x),\\
\chi'(x)&=\chi''(\frac{\pi}{2})(x-\frac{\pi}{2})+\frac{1}{2}\chi'''(\frac{\pi}{2})(x-\frac{\pi}{2})^2+R_2(x),\\
\sin(x)&=1-\frac{1}{2}(x-\frac{\pi}{2})^2+R_3(x),\\
\cos(x)&=-(x-\frac{\pi}{2})+R_4(x),\\
\end{aligned}
\end{equation}
where $\lim_{x\rightarrow\frac{\pi}{2}}\frac{R_j}{(x-\frac{\pi}{2})^2}=0$, for $j=1,2,3,4$.
Then we obtain:
\begin{equation}
\begin{aligned}
f(x)&=((\chi''(\frac{\pi}{2})-K)^2+\tilde{R_1})(x-\frac{\pi}{2})^2;\\
g(x)&=(-K(\chi''(\frac{\pi}{2})-K)+\tilde{R_2})(x-\frac{\pi}{2})^2.\\
\end{aligned}
\end{equation}
Here $\lim_{x\rightarrow\frac{\pi}{2}}\tilde{R_j}=0$ for $j=1,2$. Now if we choose a small neighbourhood of $\frac{\pi}{2}$, then $\tilde{f}=(\chi''(\frac{\pi}{2})-K)^2+\tilde{R_1}$ and $\tilde{g}=-K(\chi''(\frac{\pi}{2})-K)+\tilde{R_2}$ satisfy the property. So we know $c$ is $C^{\infty}$ and $c>0$, which means that $g$ is a smooth Riemannian metric. In addition, $c<1$ in $]a,b[$ and $\Delta_g$ and $\Delta$ admit the same eigenfunction in this interval $]a,b[$.
\end{proof}
\begin{rem}
In fact, we can construct a counterexample in any dimension $d\geq1$. For example, we define $M=]0,\pi[\times \Pi_y^{d-1}$ where $\Pi_y^{d-1}$ is the torus of dimension $d-1$. Then consider two metric $g_1=\dif x^2+\sum_{j=0}^{d-1}\dif y^2_j$ and $g_2=c(x)\dif x^2+\sum_{j=0}^{d-1}\dif y^2_j$ where $c(x)\dif x^2$ is the metric we constructed in the dimension 1. Take the same $u_1(x)$ and $u_2(x)$ in the proof of \cref{thm:counterexample}. Let $V$ be the eigenfunction of $\sum_{j=1}^{d-1}\frac{\dif^2}{\dif y_j^2}$ associated with eigenvalue $\alpha$ in $\Pi_y^{d-1}$. Then
\begin{equation*}
\left\{
\begin{array}{l}
      -\Delta_{g_1}(u_1(x)V(y))=(\alpha+1)u_1(x)V(y),\\
    -\Delta_{g_2}(u_2(x)V(y))=(\alpha+1)u_2(x)V(y),\\
    u_1(x)V(y)+u_2(x)V(y)=0,\text{ in }]a,b[\times\Pi_y^{d-1},\\
    u_1(x)V(y),u_2(x)V(y)\in H^1_0(M).
\end{array}
\right.
\end{equation*}
But we know $u_1(x)V(y)+u_2(x)V(y)\not\equiv0$ in $M$.
\end{rem}

As we have seen, not every smooth metric can give us the unique continuation of eigenfunctions. Here, we will give a positive result under a strong condition of analyticity. In particular, let us consider the example of two equations:
\begin{equation}\label{eq:2-eq}
    \left\{
  \begin{array}{l}
    \Box_{K_1}u_1=b_1f\mathbf{1}_{]0,T[}(t)\mathbf{1}_{\omega}(x)\text{ in }]0,T[\times\Omega\\
    \Box_{K_2}u_2=b_2f\mathbf{1}_{]0,T[}(t)\mathbf{1}_{\omega}(x)\text{ in }]0,T[\times\Omega\\
    u_j=0\quad\text{ on }]0,T[\times\p\Omega,j=1,2,\\
    u_j(0,x)=u_j^0(x),\quad \p_tu_j(0,x)=u_j^1(x), j=1,2.
  \end{array}
\right.
\end{equation}
\begin{prop}\label{thm:prop 2-eq}
Given $T>0$, suppose that:
\begin{enumerate}
  \item $(\omega,T,p_{K_i})$ satisfies GCC, $i=1,2$.
  \item $K_1>K_2$ in $\Omega$ with analytic coefficients.
  \item There exists a constant $c$ such that density functions $\kappa_1$, $\kappa_2$ are analytic and $\kappa_1=c\kappa_2$.
  \item $\Omega$ has no infinite order of contact on the boundary.
\end{enumerate}
Then the system \cref{eq:2-eq} is exactly controllable.
\end{prop}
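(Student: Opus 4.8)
The plan is to read the proposition off from \cref{thm:thm of varying metric} applied with $n=2$: hypotheses (1), (2), (4) of the proposition supply hypotheses (1), (2), (3) of \cref{thm:thm of varying metric} directly (note that $K_1>K_2$ on all of $\Omega$ in particular forces $K_1>K_2$ on $\omega$), so it suffices to check that the system \cref{eq:2-eq} satisfies the unique continuation property of eigenfunctions. Thus I would fix $\lambda\in\C$ and $(\phi_1,\phi_2)\in(H^1_0(\Omega))^2$ with $-\Delta_{K_1}\phi_1=\lambda^2\phi_1$, $-\Delta_{K_2}\phi_2=\lambda^2\phi_2$ in $\Omega$, and $b_1\kappa_1\phi_1+b_2\kappa_2\phi_2=0$ in $\omega$, and aim to prove $\phi_1\equiv\phi_2\equiv0$.

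The first step is to promote the relation on $\omega$ to a relation on all of $\Omega$. Since $K_i$, and hence $\kappa_i=1/\sqrt{\det K_i}$, is analytic and $\Delta_{K_i}$ is elliptic, interior analytic hypoellipticity shows that $\phi_1$ and $\phi_2$ are real-analytic in $\Omega$ (elliptic regularity also gives $\phi_i\in H^2(\Omega)\cap H^1_0(\Omega)$). Hence $F:=b_1\kappa_1\phi_1+b_2\kappa_2\phi_2$ is real-analytic on the connected open set $\Omega$ and vanishes on the nonempty open subset $\omega$, so $F\equiv0$ on $\Omega$. Using $\kappa_1=c\kappa_2$ and $\kappa_2>0$ we factor $F=\kappa_2(b_1c\,\phi_1+b_2\phi_2)$, whence
\begin{equation*}
\phi_2=\mu\,\phi_1\quad\text{on }\Omega,\qquad \mu:=-\frac{b_1c}{b_2}\neq0,
\end{equation*}
the coefficient $\mu$ being nonzero because $b_1,b_2\neq0$ and $c>0$.

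The second step exploits the resulting overdetermination. Substituting $\phi_2=\mu\phi_1$ into $-\Delta_{K_2}\phi_2=\lambda^2\phi_2$ and dividing by $\mu$ gives $-\Delta_{K_2}\phi_1=\lambda^2\phi_1$ on $\Omega$, so together with the first equation $(\Delta_{K_1}-\Delta_{K_2})\phi_1=0$ on $\Omega$. Using $\kappa_1=c\kappa_2$ once more, $\Delta_{K_1}=\frac{1}{\kappa_2}\mathrm{div}(\kappa_2K_1\nabla\cdot)$, so
\begin{equation*}
\mathrm{div}\big(\kappa_2(K_1-K_2)\nabla\phi_1\big)=\kappa_2(\Delta_{K_1}-\Delta_{K_2})\phi_1=0\quad\text{in }\Omega.
\end{equation*}
Since $\phi_1\in H^1_0(\Omega)$, pairing this identity with $\overline{\phi_1}$ in $L^2(\Omega)$ and integrating by parts (the boundary term vanishes) yields $\int_\Omega\kappa_2\,(K_1-K_2)\nabla\phi_1\cdot\overline{\nabla\phi_1}\,dx=0$. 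The integrand is pointwise nonnegative and, since $K_1(x)-K_2(x)$ is positive definite for every $x\in\Omega$ and $\kappa_2>0$, strictly positive wherever $\nabla\phi_1\neq0$; hence $\nabla\phi_1\equiv0$, so $\phi_1$ is constant on the connected set $\Omega$ and, by the Dirichlet condition, $\phi_1\equiv0$, and then $\phi_2=\mu\phi_1\equiv0$. This establishes the unique continuation of eigenfunctions for \cref{eq:2-eq}, and the proposition follows from \cref{thm:thm of varying metric}.

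The one genuinely delicate point is the passage from a relation valid only on $\omega$ to one valid on all of $\Omega$; this is precisely what can fail for general smooth metrics (cf.\ \cref{thm:counterexample}), and here it is recovered solely from the analyticity hypothesis (which forces the eigenfunctions to be analytic, hence determined by their restrictions to $\omega$) together with the proportionality $\kappa_1=c\kappa_2$ (which is what lets one extract a pointwise linear dependence $\phi_2=\mu\phi_1$ out of $b_1\kappa_1\phi_1+b_2\kappa_2\phi_2=0$). Once that is done, the remainder is the elementary energy identity above, which uses $K_1>K_2$ and requires no propagation or Carleman machinery.
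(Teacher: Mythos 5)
Your proposal is correct and takes essentially the same route as the paper: reduce to unique continuation of eigenfunctions, use analyticity of the eigenfunctions to propagate the relation $b_1\kappa_1\phi_1+b_2\kappa_2\phi_2=0$ from $\omega$ to all of $\Omega$, use $\kappa_1=c\kappa_2$ to combine the two eigenvalue equations into $\mathrm{div}\bigl(\kappa_2(K_1-K_2)\nabla\phi_1\bigr)=0$, and conclude from the positivity of $K_1-K_2$ together with the Dirichlet condition. The only cosmetic difference is that you justify the last step by the energy identity $\int_\Omega\kappa_2\,(K_1-K_2)\nabla\phi_1\cdot\overline{\nabla\phi_1}\,dx=0$, whereas the paper invokes ellipticity of $-\frac{1}{\kappa_2}\mathrm{div}(\kappa_2(K_1-K_2)\nabla\cdot)$ with zero Dirichlet data; your quadratic-form argument is, if anything, the more explicit justification of that same claim.
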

\begin{proof}
According to \cref{thm:except finte dim}, we only need to show the unique continuation of eigenfunctions of system \cref{eq:2-eq}:
\begin{equation}
\left\{
  \begin{array}{l}
    -\Delta_{K_1}u_1=\lambda^2u_1\text{ in }\Omega, \\
    -\Delta_{K_2}u_2=\lambda^2u_2 \text{ in }\Omega,\\
    cu_1+u_2=0\text{ in } \omega.\\
  \end{array}
\right.
\end{equation}
Since $K_1$ and $K_2$ have analytic coefficients, we know $u_1$ and $u_2$ are analytic functions. Then $cu_1+u_2$ is also analytic. By unique continuation for analytic functions, $cu_1+u_2=0$ in the whole domain $\Omega$. By the relations of two density functions $\kappa_1=c\kappa_2$, we have:
\begin{equation}
\begin{aligned}
    \Delta_{K_1}u_1&=\frac{1}{\kappa_1(x)}div(\kappa_1(x)K_1\nabla u_1)\\
                   &=\frac{1}{c\kappa_2(x)}div(c\kappa_2(x)K_1\nabla u_1)\\
                   &=\frac{1}{\kappa_2(x)}div(\kappa_2(x)K_1\nabla u_1).
\end{aligned}
\end{equation}
Then 
\begin{equation*}
\begin{aligned}
    -c\Delta_{K_1}u_1-\Delta_{K_2}u_2&=-\frac{c}{\kappa_2(x)}div(\kappa_2(x)K_1\nabla u_1)-\frac{1}{\kappa_2(x)}div(\kappa_2(x)K_2\nabla u_2)\\
                        &=-\frac{c}{\kappa_2(x)}div(\kappa_2(x)K_1\nabla u_1)+\frac{c}{\kappa_2(x)}div(\kappa_2(x)K_2\nabla u_1)\\
                        &=-\frac{c}{\kappa_2(x)}div(\kappa_2(x)(K_1-K_2)\nabla u_2).
\end{aligned}
\end{equation*}
On the other hand, we know $-c\Delta_{K_1}u_1-\Delta_{K_2}u_2=\lambda^2(cu_1+u_2)=0$. Hence, we have:
\begin{equation*}
    -\frac{1}{\kappa_2(x)}div(\kappa_2(x)(K_1-K_2)\nabla u_1)=0.
\end{equation*}
We recall that $-\frac{1}{\kappa_2(x)}div(\kappa_2(x)(K_1-K_2)\nabla\cdot)$ is an elliptic operator. Hence, with $u_1|_{\p\Omega}=0$ on the boundary, we know that $u_1=0$. Hence, we deduce $u_2=-cu_1=0$ in $\Omega$, which gives $\mathcal{N}(T)=0$. 
\end{proof}

\subsection{Constant Coefficient Case}\label{sec:Constant Coefficient Case}
In this section, we consider the simultaneous control problem for the system:
\begin{equation}
    \p_t^2 U-D\Delta U=Bf\mathbf{1}_{]0,T[}(t)\mathbf{1}_{\omega}(x)\text{ in }]0,T[\times\Omega,
\end{equation}
where $U=\left(
\begin{array}{cc}
     u_1 \\
     \vdots\\
     u_n
\end{array}
\right)$, $B=\left(
\begin{array}{cc}
     b_1 \\
     \vdots\\
     b_n
\end{array}
\right)$ and $D=diag(d_1,\cdots,d_n)$. %without loss of generality, we can assume that $b_1=\cdots=b_n=1$. 
Then the system can be written as 
\begin{equation*}
    \left\{
    \begin{array}{l}
        (\partial_t^2-d_1\Delta)u_1=b_1f\mathbf{1}_{]0,T[}(t)\mathbf{1}_{\omega}(x)\text{ in }]0,T[\times\Omega,\\
        \vdots\\
        (\partial_t^2-d_n\Delta)u_n=b_n f\mathbf{1}_{]0,T[}(t)\mathbf{1}_{\omega}(x)\text{ in }]0,T[\times\Omega,\\
        u_j=0\quad\text{ on }]0,T[\times\p\Omega,1\leq j\leq n,\\
    u_j(0,x)=u_j^0(x),\quad \p_tu_j(0,x)=u_j^1(x), 1\leq j\leq n.
    \end{array}
    \right.
\end{equation*}
Recall that the Kalman rank condition for this case is $rank[D|B]=n$ if and only if all $d_j$ are distinct and $b_j\neq0$, $1\leq j\leq n$(See \cite{ammar2009kalman}). Without loss of generality, we may assume that $d_1<d_2<\cdots<d_n$. We want to prove the exact controllability for this case(\cref{thm:constant result}). 
\subsection{Proof of \cref{thm:constant result}}\label{sec:Proof of thm:constant result}
By \cref{thm:except finte dim}, we only need to prove the unique continuation properties for eigenfunctions. Here we only state some facts without repeating the same trick as before. Define 
\begin{equation*}
    \mathscr{N}(T)=\{\mathscr{V}\in (L^2\times H^{-1})^n:(b_1v_1+b_2v_2+\cdots+b_nv_n)(x,t)=0,\forall(x,t)\in]0,T[\times\omega\}.
\end{equation*}
Then, $\mathscr{N}(T)$ is a finite dimensional closed subspace of $D(\mathscr{A})$, and stable by the action of the operator $\mathscr{A}$, it contains an eigenfunction of $\mathscr{A}$, where $\mathscr{A}=\left(
\begin{array}{cccc}
     0&-Id\\
     -D\Delta& 0\\
\end{array}
\right)$. Thus there exist $\beta\in \C$ and $\mathscr{V}_{\beta}=(V_1,V_2)$ such that $\mathscr{A}\mathscr{V}_{\beta}=\beta\mathscr{V}_{\beta}$, i.e.
\begin{equation}
    -\Delta V_1= -\beta^2D^{-1}V_1
\end{equation}
If $\beta\neq0$, $(-\beta^2)^{-k}(-\Delta)^kV_1= D^{-k}V_1$ and $(-\Delta)^kB^tV_1= (-\beta^2)^kB^tD^{-k}V_1$. Since $V_1$ solves the Laplace eigenvalue problem, we know that $V_1$ is analytic in $\Omega$ which ensures that $B^tV_1=b_1v_1^1+\cdots+b_nv_1^n=0$ in the whole domain $\Omega$. Thus
\begin{equation}
0=[B^tV_1|(-\beta^2)^{-1}(-\Delta)B^tV_1|\cdots|(-\beta^2)^{-n}(-\Delta)^nB^tV_1]=[D|B]^tD^{1-n}V_1
\end{equation}
Since $rank[D|B]=n$, it is invertible. This gives that $V_1=0$. 

If $\beta=0$, we immediately obtian that $V_1=0$ by the boundary condition.

Now we assume that the matrix $(D,B)$ does not satisfy the Kalman rank condition. Then we know that either there exist $d_{j_1}$ and $d_{j_2}$ such that $d_{j_1}=d_{j_2}$, or there exists some $b_j=0$. We want to show the unique continuation property fails in both cases. One can refer to \cite{fattorini1966some} for more details. 

For the first case $b_j=0$, we know that
\begin{equation*}
    (\partial_t^2-d_j\Delta)u_j=0\text{ in }]0,T[\times\Omega,
\end{equation*}
by the conservation of energy, the solution $u_j$ cannot be zero at any time if the initial data is not zero.

For the second case, we consider the unique continuation property of the eigenfunctions as follows:
\begin{equation*}
    \left\{
    \begin{array}{l}
        -d_1\Delta \phi_1=\lambda^2\phi_1\text{ in }\Omega,\\
        \vdots\\
        -d_{j_1}\Delta \phi_{j_1}=\lambda^2\phi_{j_1}\text{ in }\Omega,\\
        -d_{j_2}\Delta \phi_{j_2}=\lambda^2\phi_{j_2}\text{ in }\Omega,\\
        \vdots\\
        -d_n\Delta \phi_n=\lambda^2\phi_n\text{ in }\Omega,\\
        \phi_j=0\quad\text{ on }\p\Omega,1\leq j\leq n,\\
    b_1\phi_1+\cdots+b_n\phi_n=0 \text{ in }\omega,
    \end{array}
    \right.
\end{equation*}
Since we have the relation $d_{j_1}=d_{j_2}$, we know that there exists a non-zero solution $(0,\cdots,0,\phi,-\frac{b_{j_1}}{b_{j_2}}\phi,0,\cdots,0)$, where $\phi$ is an eigenfunction for $-d_{j_1}\Delta$ of eigenvalue $\lambda^2$. Hence, we cannot obtain the exact controllability in this case.

To conclude, we have obtained that the Kalman rank condition is a sufficient and necessary condition for the exact controllabilty. 
\subsection{Two Generic Properties}\label{sec:Two Generic Properties}
If we define $\Delta_{K_1}=\Delta=\frac{d^2}{dx^2}$ and $n=2$, we have shown that not every smooth metric can give us a unique continuation result in dimension $1$ (see \Cref{sec:A counterexample}). Then we want to prove a generic property for the metrics which can give the unique continuation result in dimension $1$. 
We introduce the following space of smooth metrics to be sections of a bundle
\begin{equation*}
    \mathcal{M}=\{g\in C^{\infty}(\Omega,T^*\Omega\otimes T^*\Omega):g(x)(v_x,v_x)>0,\text{ for }0\neq v_x\in T_x\Omega\}.
\end{equation*}
Let $\Omega=]0,\pi[$.
\begin{prop}
In dimension $1$, suppose that we fix the Laplacian $\Delta=\frac{d^2}{dx^2}$ in $]0,\pi[$ with its spectrum $\sigma(\Delta)$. Then the set $\G_{uc}=\{g\in\mathcal{M}:\sigma(\Delta_g)\cap\sigma(\Delta)=\emptyset\}$ is residual in $\mathcal{M}$.
\end{prop}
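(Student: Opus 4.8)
The plan is to use the one-dimensional arc-length change of variables to turn the spectral condition defining $\G_{uc}$ into a numerical condition on the total $g$-length of $]0,\pi[$; once this is done, residuality is a routine Baire category argument.

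In dimension one a positive section of $T^*\Omega\otimes T^*\Omega$ is simply $g=c(x)\,dx^2$ with $c\in C^\infty([0,\pi])$, $c>0$, and, as recalled in \Cref{sec:A counterexample}, $\Delta_g=\frac{1}{\sqrt c}\frac{d}{dx}\bigl(\frac{1}{\sqrt c}\frac{d}{dx}\bigr)$. Introducing the arc-length coordinate $s=s(x)=\int_0^x\sqrt{c(y)}\,dy$ one has $\frac{1}{\sqrt c}\frac{d}{dx}=\frac{d}{ds}$ and $dV_g=\sqrt c\,dx=ds$, so that $f\mapsto f\circ s$ is a unitary isomorphism $L^2(]0,L(g)[,ds)\to L^2(]0,\pi[,dV_g)$, preserving Dirichlet conditions, which intertwines the Dirichlet operators $-\frac{d^2}{ds^2}$ on $]0,L(g)[$ and $-\Delta_g$ on $]0,\pi[$; here $L(g)=\int_0^\pi\sqrt c\,dx$ is the $g$-length of $]0,\pi[$. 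Hence the Dirichlet spectrum of $-\Delta_g$ is $\{(k\pi/L(g))^2:k\in\N^*\}$, i.e. $\sigma(\Delta_g)=\{-(k\pi/L(g))^2:k\in\N^*\}$, and taking $c\equiv1$ (so $L=\pi$) recovers $\sigma(\Delta)=\{-k^2:k\in\N^*\}$.

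It follows that $\sigma(\Delta_g)\cap\sigma(\Delta)=\emptyset$ if and only if there are no $k,m\in\N^*$ with $k\pi/L(g)=m$, that is, if and only if $L(g)\notin\pi\Q_{>0}$. Writing $\ell:\mathcal{M}\to\,]0,\infty[$ for the length functional $\ell(g)=\int_0^\pi\sqrt c\,dx$, this reads
\[
\G_{uc}=\ell^{-1}\bigl(\,]0,\infty[\,\setminus\pi\Q_{>0}\bigr)=\bigcap_{q\in\Q_{>0}}\ell^{-1}\bigl(\,]0,\infty[\,\setminus\{\pi q\}\bigr).
\]
Each set in this countable intersection is open, because $\ell$ is continuous on $\mathcal{M}$ (already for the $C^0$ topology, a fortiori for the $C^\infty$ one), and dense, because $\ell^{-1}(\{\pi q\})$ has empty interior: for any $g_0=c_0\,dx^2$ the curve $\lambda\mapsto\lambda^2c_0\,dx^2$ remains in any prescribed neighbourhood of $g_0$ once $\lambda$ is close to $1$, while $\ell(\lambda^2c_0\,dx^2)=\lambda\,\ell(g_0)$ is non-constant, so $\ell$ is constant on no nonempty open subset of $\mathcal{M}$. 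Since $\mathcal{M}$ is an open subset of the Fréchet space $C^\infty([0,\pi],T^*\Omega\otimes T^*\Omega)$, it is a Baire space, and therefore $\G_{uc}$, a countable intersection of open dense subsets, is residual in $\mathcal{M}$. (For such $g$ the unique continuation of eigenfunctions then holds trivially: having no common eigenvalue, at least one of the two eigenfunctions in the coupled system must vanish identically, which forces the other one to vanish as well.)

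The only real content of the argument is the arc-length reduction, which is elementary in dimension one and renders the transversality machinery unnecessary here; the one point deserving a word of care is the topology on $\mathcal{M}$ — the continuity of $\ell$ and the scaling argument require the topology of $C^\infty$ convergence up to the boundary (equivalently, the uniform ellipticity bounds of the Introduction), which is the relevant one, and would need a minor adjustment were $\mathcal{M}$ equipped only with the local $C^\infty_{\mathrm{loc}}(\,]0,\pi[\,)$ topology.
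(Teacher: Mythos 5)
Your proposal is correct and follows essentially the same route as the paper: the arc-length change of variables $s=\int_0^x\sqrt{c}$, the identification $\sigma(\Delta_g)=\{k^2\pi^2/L(g)^2\}$, and the reduction of the spectral condition to $L(g)\notin\pi\Q$. In fact you go slightly further than the paper, which stops at this reduction and leaves the residuality of $\{g:\ell(g)\notin\pi\Q\}$ implicit, whereas you complete it with the Baire category argument (continuity of $\ell$, density via scaling) and correctly flag the topology on $\mathcal{M}$ as the one point needing care.
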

\begin{proof}
First, we notice that any connected one dimensional Riemannian manifold is  diffeomorphic either to $\R$ or to $S^1$. We already know that $\sigma(\Delta)=\{k^2\}_{k\in\N}$. In our setting, we have $g=c(x)dx^2$. Then by change of variables, $y=\int_0^x\sqrt{c(s)}ds$. Then $\frac{d}{dy}=\frac{dx}{dy}\frac{d}{dx}=\frac{1}{\sqrt{c(x)}}\frac{d}{dx}$. Hence, we obtain $\frac{d^2}{dy^2}=\frac{1}{\sqrt{c(x)}}\frac{d}{dx}\frac{1}{\sqrt{c(x)}}\frac{d}{dx}=\Delta_g$. Define $L=\int_0^{\pi}\sqrt{c(s)}ds$. Hence, $\sigma(\Delta_g)=\sigma(\frac{d^2}{dy^2})=\{\frac{k^2\pi^2}{L^2}\}_{k\in\N}$.  If $\sigma(\Delta_g)\cap\sigma(\Delta)\neq\emptyset$, we obtain that for some $k$ and $l$, $L=\frac{k\pi}{l}\in\pi\Q$, i.e. $\int_0^{\pi}\sqrt{c(x)}\dif x\in \pi\Q$.%, which is a residual set.
\end{proof}
\begin{coro}
Fix $\Delta=\frac{d^2}{dx^2}$, for every metric $g\in\G_{uc}$, the system \cref{eq:UC}
has a unique solution $u_1=u_2=0$.
\end{coro}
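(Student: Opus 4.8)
The plan is to read off the statement directly from the spectral disjointness encoded in the definition of $\G_{uc}$, combined with the elementary uniqueness theory for second order linear ODEs. Suppose $g\in\G_{uc}$, so that $\sigma(\Delta_g)\cap\sigma(\Delta)=\emptyset$, and let $(u_1,u_2)$ (for some parameter $\lambda$) solve \cref{eq:UC}. If $\lambda=0$, then $u_1''=0$ and $\Delta_g u_2=0$ with homogeneous Dirichlet conditions, and integration by parts gives $u_1=u_2=0$; so from now on assume $\lambda\neq0$. If $u_1\not\equiv0$, then $u_1$ is a Dirichlet eigenfunction of $-d^2/dx^2$ on $]0,\pi[$ for the eigenvalue $\lambda^2$, hence $\lambda^2\in\sigma(\Delta)$; likewise, if $u_2\not\equiv0$, then $\lambda^2\in\sigma(\Delta_g)$.

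The second ingredient is that a nonzero solution of a second order linear ODE with smooth coefficients cannot vanish on a nonempty open interval: if $u_1$ vanishes on $]a,b[$, then $u_1(x_0)=u_1'(x_0)=0$ for any $x_0\in]a,b[$, and Cauchy uniqueness (the uniqueness part of Picard--Lindel\"of applied to the first order system equivalent to $u_1''=-\lambda^2u_1$) forces $u_1\equiv0$ on $]0,\pi[$; the same applies to $u_2$ with the equation $\Delta_g u_2=-\lambda^2u_2$. Now feed in the coupling condition $u_1+u_2=0$ on $]a,b[$: if $u_1\equiv0$ then $u_2=0$ on $]a,b[$, hence $u_2\equiv0$, and symmetrically if $u_2\equiv0$. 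Therefore either $u_1\equiv u_2\equiv0$, which is the assertion, or else both $u_1$ and $u_2$ are nonzero, in which case $\lambda^2\in\sigma(\Delta)\cap\sigma(\Delta_g)$, contradicting $g\in\G_{uc}$.

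There is no genuine obstacle here, since the substantive part has already been isolated in the preceding proposition; the corollary is essentially a restatement of it via unique continuation. The only points needing (minor) care are: matching the sign convention so that the right-hand side $-\lambda^2 u_j$ in \cref{eq:UC} corresponds to the eigenvalue $\lambda^2\in\sigma(\Delta)=\{k^2\}_{k\geq1}$ normalised as in the proposition; treating the degenerate value $\lambda=0$ separately, as above; and invoking unique continuation in its robust ODE form (vanishing on an open set implies global vanishing) rather than a PDE version — in one space dimension the two coincide. Equivalently, one may summarise the whole argument as: any pair invisible on $]a,b[$ produces, after unique continuation, a common Dirichlet eigenvalue of $\Delta$ and $\Delta_g$, which $\G_{uc}$ excludes.
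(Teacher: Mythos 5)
Your proof is correct and follows essentially the same route as the paper: split into the case where one of $u_1,u_2$ vanishes identically (then the coupling on $]a,b[$ plus unique continuation kills the other) and the case where both are nonzero (then $\lambda^2\in\sigma(\Delta)\cap\sigma(\Delta_g)$, contradicting $g\in\G_{uc}$). Your additional remarks --- treating $\lambda=0$ separately and justifying unique continuation via Cauchy uniqueness for the second order ODE --- are harmless refinements of the same argument, not a different approach.
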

\begin{proof}
By the definition of $\G_{uc}$, we know $\sigma(\Delta_g)\cap\sigma(\Delta)=\emptyset$. Consider a solution $u_1$, $u_2$ of 
\begin{equation*}
\left\{
\begin{array}{cc}
     u''_1=-\lambda^2u_1, \\
     \Delta_gu_2=-\lambda^2u_2,\\
     u_1+u_2=0 \text{ in }]a,b[,\\
     u_1,u_2\in H^1_0(]0,\pi[).
\end{array}
\right.
\end{equation*} 
Now, assume that $u_1=0$. Then $u_2=0$ in $]a,b[$. Hence, by the unique continuation property for the eigenfunctions, we know that $u_2=0$. This means that the system has only trivial solution in this case. It is the same for $u_2=0$.

Assume that $u_1\neq0$ then $u_1\neq0$ in $]a,b[$(otherwise $u_1=0$ everywhere by the unique continuation property) and therefore $u_2\neq0$. Then $u_1$ and $u_2$ are both eigenfunctions. Hence $\lambda^2\in\sigma(\Delta_g)\cap\sigma(\Delta)=\emptyset$, which is a contradiction. So for every $g\in\G_{uc}$, the system has only the trivial solution $(0,0)$. 
\end{proof}
From now on and until the end of the section, we restrict to the $2$ dimensional case $d=2$. For any smooth metric $g$, we can define a Laplace-Beltrami operator $-\Delta_{g}$.
\begin{defi}
Define the map:
\begin{equation*}
    \mathcal{E}^{\alpha}: H^2(\Omega)\cap H^1_0(\Omega)\times \mathcal{M}\rightarrow H^{-1} 
\end{equation*}
by $\mathcal{E}^{\alpha}(u,g)=(\Delta_g+\alpha)u$.
\end{defi}
\begin{rem}
$-\Delta_g$ is a Fredholm operator of index $0$, and $\mathcal{E}_g^{\alpha}=\mathcal{E}^{\alpha}(\cdot,g)$ is also a Fredholm map of index $0$(see \cite{uhlenbeck1976generic}). Here $\alpha$ is just a parameter. In the later proof, we will let $\alpha$ take all possible values in the spectrum of the given Laplacian.
\end{rem}
From now on, we fix one metric $g_0$ and the associted operator $-\Delta_{g_0}$.
\begin{lem}\label{thm:condition}
For any $\lambda$ %\in\sigma(\Delta_{g_0})$
fixed and any element $f\in H^{-1}$, $\lambda\notin\sigma(\Delta_g)$ if and only if $f$ is a regular value (i.e. the tangential map at this point is surjective) of $\mathcal{E}^{\lambda}_g:H^2(\Omega)\cap H^1_0(\Omega)\rightarrow H^{-1} $.
\end{lem}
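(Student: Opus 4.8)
The plan is to reduce the claim to the elementary fact that a Fredholm operator of index $0$ between Banach spaces is surjective if and only if it is injective. The first observation is that, with $g$ fixed, the map $\mathcal{E}^\lambda_g\colon u\mapsto(\Delta_g+\lambda)u$ is \emph{linear}, so its Fr\'echet derivative at every point $u\in H^2(\Omega)\cap H^1_0(\Omega)$ equals $\mathcal{E}^\lambda_g$ itself. Hence $f\in H^{-1}$ is a regular value of $\mathcal{E}^\lambda_g$ exactly when either $f$ does not lie in the range of $\mathcal{E}^\lambda_g$, or $\mathcal{E}^\lambda_g$ is onto; in particular ``every $f\in H^{-1}$ is a regular value'' is the same as ``$\mathcal{E}^\lambda_g$ is surjective'', and, with the convention that a value which is not attained does not count as regular, for each fixed $f$ the statement ``$f$ is a regular value'' is again equivalent to ``$\mathcal{E}^\lambda_g$ is surjective''. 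I would therefore prove the lemma in the form: $\lambda\notin\sigma(\Delta_g)$ if and only if $\mathcal{E}^\lambda_g=\Delta_g+\lambda$ is surjective.

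For the forward implication, $\lambda\notin\sigma(\Delta_g)$ means by definition of the spectrum that $\Delta_g+\lambda$ is boundedly invertible from $H^2(\Omega)\cap H^1_0(\Omega)$ onto $H^{-1}(\Omega)$; here one uses that $-\Delta_g$ with the Dirichlet condition is self-adjoint with compact resolvent, so its spectrum is a discrete set of eigenvalues and ``$\lambda$ is not an eigenvalue'' coincides with ``$\Delta_g+\lambda$ is invertible''. In particular $\mathcal{E}^\lambda_g$ is onto, its derivative is onto at every point, and every $f$ is a regular value.

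For the converse, suppose $\lambda\in\sigma(\Delta_g)$ and pick a nonzero $\phi\in H^2(\Omega)\cap H^1_0(\Omega)$ with $(\Delta_g+\lambda)\phi=0$, so that $\ker\mathcal{E}^\lambda_g\neq\{0\}$. Recalling from the remark preceding the statement that $\mathcal{E}^\lambda_g$ is a Fredholm map of index $0$, nontriviality of the kernel forces nontriviality of the cokernel, hence $\mathcal{E}^\lambda_g$ is not surjective. Then $f=0$ is not a regular value: $u=0$ lies in its preimage and the tangent map there is $\mathcal{E}^\lambda_g$, which is not onto. This gives the contrapositive of the claimed implication and completes the equivalence.

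The argument is essentially bookkeeping, and I do not expect a serious obstacle. The only genuine inputs are the two standard structural facts for $-\Delta_g$ on the bounded domain $\Omega$ with Dirichlet boundary conditions: that it is self-adjoint with compact resolvent, so that $\sigma(\Delta_g)$ is pure point spectrum and ``$\lambda\in\sigma(\Delta_g)$'' is equivalent to ``$\ker(\Delta_g+\lambda)\neq\{0\}$''; and that $\mathcal{E}^\lambda_g$ is Fredholm of index $0$, which is exactly what promotes injectivity to bijectivity. The one point requiring care is purely notational: one must keep the sign convention for $\Delta_g$ (hence for $\sigma(\Delta_g)$) consistent so that membership of $\lambda$ in $\sigma(\Delta_g)$ is precisely the obstruction to invertibility of the operator $\Delta_g+\lambda$ that defines $\mathcal{E}^\lambda_g$.
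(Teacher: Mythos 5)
Your proposal is correct and follows essentially the same route as the paper: since $\mathcal{E}^\lambda_g$ is linear, its tangent map at every point is $\Delta_g+\lambda$ itself, and the lemma reduces to the equivalence ``$\lambda\notin\sigma(\Delta_g)$ $\Leftrightarrow$ $\Delta_g+\lambda$ bijective''. You are in fact a bit more explicit than the paper on the two points it leaves implicit — using the Fredholm index-$0$ property to pass from a nontrivial kernel to failure of surjectivity, and fixing the convention for regular values not attained — which is a welcome clarification rather than a deviation.
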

\begin{proof}
Let $\mathcal{E}^{\lambda}_g(u)=\mathcal{E}^{\lambda}(u,g)=f$. At this point $u$, the tangential map $D\mathcal{E}^{\lambda}_g:T_u(H^k(\Omega)\cap H^1_0(\Omega))\rightarrow H^{-1}(\Omega)$ is given by $D\mathcal{E}^{\lambda}_g(v)=(\Delta_g+\lambda)v$, since $\Delta_g+\lambda$ is a linear operator. $\lambda\notin\sigma(\Delta_g)$ is equivalent to that  $\Delta_g+\lambda$ is bijective, which means $f$ is a regular value of $\mathcal{E}^{\lambda}_g$.
\end{proof}
Our proof mainly rely on the following theorem:
\begin{thm}[Transversality theorem]\label{thm:transversality theorem}
Let $\varphi:H\times B\rightarrow E$ be a $C^k$ map, $H$, $B$, and $E$ Banach manifolds with $H$ and $E$ separable. If $f$ is a regular value of $\varphi$ and $\varphi_b=\varphi(\cdot,b)$ is a Fredholm map of index $<k$, then the set $\{b\in B:f\text{ is a regular value of }\varphi_b\}$ is residual in $B$.
\end{thm}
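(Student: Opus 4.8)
The plan is to reduce this parametrized transversality statement to the Sard--Smale theorem (Sard's theorem for Fredholm maps), following the classical scheme of Abraham--Robbin and Quinn; in fact one could simply cite it, but here is the argument I would write. The starting point is that, since $f$ is a regular value of $\varphi$, the Banach-space implicit function theorem makes $W:=\varphi^{-1}(f)$ a $C^k$ Banach submanifold of $H\times B$, with $T_{(h,b)}W=\ker D\varphi(h,b)$ at each of its points. I would then consider the restriction to $W$ of the projection onto the parameter factor,
\begin{equation*}
    \pi\colon W\longrightarrow B,\qquad \pi(h,b)=b ,
\end{equation*}
which is again a $C^k$ map, and the whole proof reduces to understanding $\pi$.

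The technical heart is a pointwise computation in linear algebra. Fix $(h,b)\in W$ and write $D\varphi(h,b)(v,w)=L_1v+L_2w$, with $L_1=D\varphi_b(h)\colon T_hH\to E$ and $L_2=D_b\varphi(h,b)\colon T_bB\to E$. Then $T_{(h,b)}W=\{(v,w):L_1v+L_2w=0\}$, the differential $D\pi(h,b)$ is $(v,w)\mapsto w$, and one reads off $\ker D\pi(h,b)\cong\ker L_1$ together with $\operatorname{Im}D\pi(h,b)=L_2^{-1}(\operatorname{Im}L_1)$. Since $D\varphi(h,b)$ is onto, $\operatorname{Im}L_1+\operatorname{Im}L_2=E$, so $L_2$ induces an isomorphism
\begin{equation*}
    \operatorname{coker}D\pi(h,b)=T_bB/L_2^{-1}(\operatorname{Im}L_1)\;\cong\;E/\operatorname{Im}L_1=\operatorname{coker}L_1 .
\end{equation*}
This gives two facts simultaneously: first, $D\pi(h,b)$ is Fredholm with $\operatorname{ind}D\pi(h,b)=\operatorname{ind}L_1=\operatorname{ind}\varphi_b<k$, so $\pi$ is a $C^k$ Fredholm map of index $<k$; second, $D\pi(h,b)$ is surjective if and only if $L_1=D\varphi_b(h)$ is. Because the first-factor projection identifies $\pi^{-1}(b)$ with $\varphi_b^{-1}(f)$, the latter equivalence says exactly that $b$ is a regular value of $\pi$ if and only if $f$ is a regular value of $\varphi_b$.

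It then remains to apply the Sard--Smale theorem to $\pi\colon W\to B$: a $C^k$ Fredholm map of index strictly less than $k$ between suitable (second countable, paracompact) Banach manifolds has a residual set of regular values in its target. The separability assumptions on $H$ and $E$, together with the Fredholm property of the maps $\varphi_b$, are what make $W$ second countable so that this applies; combining its conclusion with the equivalence of the previous paragraph shows that $\{b\in B: f\text{ is a regular value of }\varphi_b\}$ is residual in $B$, which is the assertion. I expect the only genuinely delicate point to be the bookkeeping of the middle paragraph — checking that $\pi$ is Fredholm of exactly the index of $\varphi_b$ and that its regular values are precisely the good parameters — whereas Sard--Smale itself is used as a black box (Smale; Abraham--Robbin; Quinn).
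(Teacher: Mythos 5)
The paper itself does not prove this statement---it is quoted as Abraham's transversality (density) theorem with a pointer to \cite{abraham1963transversality}---and your argument is exactly the classical proof from that source: pass to $W=\varphi^{-1}(f)$, show the projection $\pi\colon W\to B$ is a $C^k$ Fredholm map of index $\operatorname{ind}\varphi_b<k$ whose regular values are precisely the parameters $b$ for which $f$ is a regular value of $\varphi_b$, and conclude by Sard--Smale. The only step worth making explicit is that $\ker D\varphi(h,b)$ splits, so that the implicit function theorem really does produce the submanifold $W$; this follows from the Fredholm property of $L_1=D\varphi_b(h)$ (the kernel projects onto $L_2^{-1}(\operatorname{Im}L_1)$, a closed finite-codimensional subspace, with finite-dimensional kernel $\ker L_1$), and with that remark your proof is complete and matches the cited one.
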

One can find a proof in \cite{abraham1963transversality}.
\begin{lem}
If $\lambda\in\sigma(\Delta_{g_0})$ is a regular value of $\mathcal{E}^{\lambda}$, then the set $\{g\in\mathcal{M}:\lambda\notin\sigma(\Delta_g)\}$ is residual in $\mathcal{M}$.
\end{lem}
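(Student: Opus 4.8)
The plan is to deduce the statement from the transversality theorem (\cref{thm:transversality theorem}) applied to the joint evaluation map $\mathcal{E}^{\lambda}$, using \cref{thm:condition} to convert the resulting assertion "$f$ is a regular value of $\mathcal{E}^{\lambda}_g$'' into "$\lambda\notin\sigma(\Delta_g)$''. Concretely, I would take in \cref{thm:transversality theorem} the map $\varphi=\mathcal{E}^{\lambda}\colon (H^2(\Omega)\cap H^1_0(\Omega))\times\mathcal{M}\to H^{-1}$, the model space $H=H^2(\Omega)\cap H^1_0(\Omega)$, the target $E=H^{-1}(\Omega)$, the parameter manifold $B=\mathcal{M}$, and the point $f=0$; the standing hypothesis, that $\lambda$ is a regular value of $\mathcal{E}^{\lambda}$, is precisely the statement that $0\in H^{-1}$ is a regular value of this joint map, i.e.\ that the full differential $D\mathcal{E}^{\lambda}(u,g)$ is surjective at every pair $(u,g)$ with $(\Delta_g+\lambda)u=0$.

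Next I would verify the remaining hypotheses of \cref{thm:transversality theorem}, all of which are soft. The space $H$ is a closed subspace of the separable Hilbert space $H^2(\Omega)$, and $E=H^{-1}(\Omega)$ is separable. The map $\mathcal{E}^{\lambda}$ is $C^k$ for every $k$: it is linear, hence smooth, in $u$, and it depends on $g$ only through the coefficients of $\Delta_g=\tfrac{1}{\sqrt{\det g}}\partial_i(\sqrt{\det g}\,g^{ij}\partial_j\,\cdot\,)$, which in local coordinates are rational functions of the entries of $g$ and of their first derivatives, with denominators bounded away from $0$ on $\mathcal{M}$. Finally $\mathcal{E}^{\lambda}_g=\mathcal{E}^{\lambda}(\cdot,g)$ is a Fredholm map of index $0$ — this is recorded in the Remark following the definition of $\mathcal{E}^{\alpha}$, after \cite{uhlenbeck1976generic} — so its index is $<k$. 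Granting for the moment that $\mathcal{M}$ is an admissible Banach parameter manifold, \cref{thm:transversality theorem} yields that $\{g\in\mathcal{M}\colon 0\text{ is a regular value of }\mathcal{E}^{\lambda}_g\}$ is residual in $\mathcal{M}$, and by \cref{thm:condition} (with $f=0$) this set coincides with $\{g\in\mathcal{M}\colon\lambda\notin\sigma(\Delta_g)\}$, which is the claim.

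The one genuinely delicate point, and the main obstacle, is that $\mathcal{M}$ with the $C^{\infty}$ topology is a Fréchet manifold rather than a Banach manifold, whereas \cref{thm:transversality theorem} is stated for Banach manifolds. I would handle this by the standard device of Uhlenbeck \cite{uhlenbeck1976generic}: run the argument above instead over the Banach manifold $\mathcal{M}^{m}$ of $C^{m}$ metrics (an open subset of the Banach space of $C^{m}$ sections of $T^*\Omega\otimes T^*\Omega$), for $m$ large; none of the verifications above are affected, so one obtains that $\{g\in\mathcal{M}^{m}\colon\lambda\notin\sigma(\Delta_g)\}$ is residual in $\mathcal{M}^{m}$. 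Since $\lambda\notin\sigma(\Delta_g)$ is stable under small $C^{m}$-perturbations of $g$ (the eigenvalues of $\Delta_g$ depending continuously on $g$), this set is in fact open, hence open and dense in $\mathcal{M}^{m}$; a mollification argument — again using openness of the condition $\lambda\notin\sigma(\Delta_g)$ — then transfers this to the statement that $\{g\in\mathcal{M}\colon\lambda\notin\sigma(\Delta_g)\}$ is open and dense, and therefore residual, in $\mathcal{M}$ for the $C^{\infty}$ topology. Everything beyond this transfer step is bookkeeping: the substantive input, namely that $0$ really is a regular value of the joint map $\mathcal{E}^{\lambda}$, is exactly the hypothesis we have been handed.
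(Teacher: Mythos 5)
Your overall skeleton is the paper's: apply \cref{thm:transversality theorem} to the joint map $\mathcal{E}^{\lambda}$ and translate ``regular value of $\mathcal{E}^{\lambda}_g$'' into ``$\lambda\notin\sigma(\Delta_g)$'' via \cref{thm:condition} (the paper's proof is literally this one line), and your care about $\mathcal{M}$ being Fr\'echet rather than Banach, with the $C^{m}$-approximation and transfer, is a point the paper silently skips. But there is a genuine gap in your hypothesis-matching step: you take the distinguished value to be $f=0$ and declare that the lemma's hypothesis ``$\lambda$ is a regular value of $\mathcal{E}^{\lambda}$'' means that $0\in H^{-1}$ is a regular value of the joint map. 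It does not: the value intended (and the one verified by the very next lemma of the paper, which studies pairs $(u,g)$ with $(\Delta_g+\lambda)u=\lambda$) is the \emph{constant function} $\lambda\in H^{-1}$. Worse, with $f=0$ the transversality theorem can never be invoked: the zero section $(0,g)$ lies in $(\mathcal{E}^{\lambda})^{-1}(0)$ for every $g$, and since $\mathcal{E}^{\lambda}$ is linear in $u$, the $g$-derivative vanishes there, so the full differential at $(0,g)$ is $(v,h)\mapsto(\Delta_g+\lambda)v$. Taking $g=g_0$ and using the standing assumption $\lambda\in\sigma(\Delta_{g_0})$ (so $\Delta_{g_0}+\lambda$, Fredholm of index $0$, has nontrivial kernel and hence nontrivial cokernel), this map is not surjective; thus ``$0$ is a regular value of the joint map'' is false under the lemma's own assumption, your conditional is vacuous, and it no longer meshes with the verification lemma that follows in the paper, which establishes regularity of the value $\lambda$, not of $0$.

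The repair is exactly the paper's (Uhlenbeck-style) device: run the argument with $f$ equal to the constant $\lambda$, so that the equation $(\Delta_g+\lambda)u=\lambda$ together with the Dirichlet condition excludes constant (in particular zero) solutions $u$, which is precisely what allows the metric variation $D_2\mathcal{E}^{\lambda}(rg)=-r\Delta_g u$ to contribute and makes the joint regularity hypothesis verifiable. The transversality theorem then yields that $\{g:\lambda \text{ (const.) is a regular value of }\mathcal{E}^{\lambda}_g\}$ is residual, and \cref{thm:condition} with $f=\lambda$ identifies it with $\{g:\lambda\notin\sigma(\Delta_g)\}$ -- modulo the caveat, inherited from the paper, that a value outside $\mathrm{Im}(\Delta_g+\lambda)$ is vacuously regular, so the backward implication of \cref{thm:condition} needs this case to be handled; your $f=0$ choice avoids that caveat, but at the cost described above of an unusable joint hypothesis. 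Your $C^{m}$/mollification transfer can be kept verbatim once the value is corrected.
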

\begin{proof}
Just apply \cref{thm:transversality theorem}, combining with \cref{thm:condition}.
\end{proof}

Now we have to check with the hypothesis, that is to verify that $\lambda\in\sigma(-\Delta_{g_0})$ is a regular value for $\mathcal{E}^{\lambda}$. In the following, we will use $D_1$ to denote the differential in the direction of $H^2(\Omega)\cap H^1_0(\Omega)$ and $D_2$ to denote the differential in the direction of $\mathcal{M}$.

Now let us check that the image of $D_2\mathcal{E}^{\lambda}$ is dense in dimension $2$. We will use the conformal variations of the metric $g$. Here we choose $r\in C^{\infty}_0(\Omega)$
\begin{equation}
\begin{aligned}
    D_2\mathcal{E}^{\lambda}(rg)&=\lim_{s\rightarrow0}\frac{(\Delta_{g+srg}-\Delta_g)u}{s}\\
                                &=\lim_{s\rightarrow0}\frac{1}{s}\left(\frac{1}{|(1+sr)g|^{\frac{1}{2}}}\partial_i|(1+sr)g|^{\frac{1}{2}}(1+sr)^{-1}g^{ij}\partial_ju-\Delta_gu\right)\\
                                &=\lim_{s\rightarrow0}\frac{1}{s}\left(\frac{2-2}{2}(1+sr)^{-2}\partial_irg^{ij}\partial_ju+\frac{1}{1+sr}\Delta_gu-\Delta_g u\right)\\
                                &=-r\Delta_gu
\end{aligned}
\end{equation}
Let us assume that $v$ is orthogonal to $D_2\mathcal{E}^{\lambda}(rg)$ for all $r$, then:
\begin{equation}\label{eq:orthogonal condition}
\begin{aligned}
    0&=\int_{\Omega}vD_2\mathcal{E}^{\lambda}(rg)d\mu_g\\
     &=\int_{\Omega}v(-r\Delta_gu)d\mu_g\\
     &=\int_{\Omega}r(\lambda u-\lambda)vd\mu_g.
\end{aligned}
\end{equation}
Since \cref{eq:orthogonal condition} holds for any $r\in C^{\infty}_0(\Omega)$ we obtain that:
\begin{equation}\label{eq:vanishing condition}
   (\lambda u-\lambda)v=0. 
\end{equation}
%First, we claim that $u$ cannot be a constant. Assume that $u$ is a constant function, $\Delta_gu=0$ and $(\Delta_g+\lambda)u=\lambda$ gives that $u=1$. But this does not satisfy the boundary condition. Hence, $u$ cannot be a constant. In particular, $u$ is not $0$. Now we get that $\lambda u-\lambda\neq0$. Hence, we can get $v=0$ in some open subdomain of $\Omega$.

Now, we can check that $\lambda$ is a regular value of $\mathcal{E}^{\lambda}$.
\begin{lem}
In dimension $2$, $\lambda\in\sigma(\Delta_{g_0})$ is a regular value of $\mathcal{E}^{\lambda}$.
\end{lem}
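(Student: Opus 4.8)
The plan is to verify that the constant function $f\equiv\lambda$ (in fact any nonzero $f\in H^{-1}$ would do equally well) is a regular value of the total map $\mathcal{E}^{\lambda}:(H^2(\Omega)\cap H^1_0(\Omega))\times\mathcal{M}\to H^{-1}$, that is, that at every pair $(u,g)$ with $\mathcal{E}^{\lambda}(u,g)=\lambda$ the total differential $D\mathcal{E}^{\lambda}_{(u,g)}=D_1\mathcal{E}^{\lambda}+D_2\mathcal{E}^{\lambda}$ is onto $H^{-1}$. The operator $D_1\mathcal{E}^{\lambda}=\mathcal{E}^{\lambda}_g$, i.e. $v\mapsto(\Delta_g+\lambda)v$, is Fredholm of index $0$ (as recalled right after the definition of $\mathcal{E}^{\alpha}$), so its range is closed with finite codimension, and by self-adjointness a continuous functional annihilating that range is represented by an element $v$ of the finite-dimensional eigenspace $\ker(\Delta_g+\lambda)$, which consists of smooth functions by elliptic regularity. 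Hence it suffices to prove that any such $v$ which \emph{also} annihilates the image of $D_2\mathcal{E}^{\lambda}$ vanishes identically; this is exactly the density statement begun above, and it yields at once the surjectivity of $D\mathcal{E}^{\lambda}_{(u,g)}$.

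So let $v\in\ker(\Delta_g+\lambda)$ annihilate the image of $D_2\mathcal{E}^{\lambda}$. By the conformal-variation computation carried out above (this is where $d=2$ enters, through the clean identity $\Delta_{(1+sr)g}=(1+sr)^{-1}\Delta_g$), one has $D_2\mathcal{E}^{\lambda}(rg)=-r\Delta_g u$ for $r\in C^{\infty}_0(\Omega)$, so $\int_{\Omega}r\,v\,\Delta_g u\,d\mu_g=0$ for all such $r$, whence $v\,\Delta_g u=0$ almost everywhere on $\Omega$. Since $\mathcal{E}^{\lambda}(u,g)=\lambda$ we have $\Delta_g u=\lambda(1-u)$, and $\lambda\neq0$ because $0$ is not a Dirichlet eigenvalue; therefore $(1-u)\,v=0$ a.e.\ on $\Omega$, which is the vanishing relation displayed above. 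Assume now $v\not\equiv0$. Then $v$ is a nontrivial smooth solution of $\Delta_g v+\lambda v=0$, so by strong unique continuation for second-order elliptic operators with smooth coefficients its nodal set $\{v=0\}$ has Lebesgue measure zero (in dimension $2$ one may alternatively invoke that this set is a locally finite union of $C^1$ arcs). Consequently $(1-u)v=0$ a.e.\ forces $u=1$ a.e.\ on $\Omega$; but $\Omega$ is bounded and $u\in H^1_0(\Omega)$, while the constant function $1$ has trace $1\neq0$ on $\partial\Omega$ and hence does not belong to $H^1_0(\Omega)$ --- a contradiction. Thus $v\equiv0$, $D\mathcal{E}^{\lambda}_{(u,g)}$ is onto, and since $(u,g)\in(\mathcal{E}^{\lambda})^{-1}(\lambda)$ was arbitrary, $\lambda$ is a regular value of $\mathcal{E}^{\lambda}$.

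The only genuinely nontrivial ingredient, and the point I expect to be the main obstacle, is the negligibility of the nodal set of a nonzero eigenfunction of $-\Delta_g$ for a \emph{merely smooth} metric: everything else is the Fredholm dichotomy, the two-dimensional conformal-variation formula, and the Dirichlet boundary condition on $u$. Weak unique continuation by itself would not, in the abstract, exclude a nodal set of positive measure, so one must appeal to Aronszajn-type strong unique continuation (doubling) estimates, which are available for $C^{\infty}$ — indeed Lipschitz — coefficients; this is the place to state things carefully.
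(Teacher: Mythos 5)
Your proof is correct and follows the paper's overall scheme: identify the annihilator of the range of $D_1\mathcal{E}^{\lambda}=(\Delta_g+\lambda)$ with smooth Dirichlet eigenfunctions, use the conformal variation $D_2\mathcal{E}^{\lambda}(rg)=-r\Delta_g u$ to get the relation $(1-u)v=0$, and exclude $u\equiv 1$ through the Dirichlet boundary condition. Where you diverge is the concluding step. The paper argues on $u$: since $u\not\equiv 1$ and $u$ is continuous, the set where $u\neq 1$ contains a nonempty open set, on which the cokernel element must vanish; then the usual ``vanishing on an open set'' unique continuation property for the elliptic equation $(\Delta_g+\lambda)y=0$ gives $y\equiv 0$. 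You instead argue on $v$: assuming $v\not\equiv 0$, you invoke the negligibility of its nodal set (via Aronszajn-type strong unique continuation) to force $u=1$ almost everywhere, contradicting $u\in H^1_0(\Omega)$. Both routes are valid and hinge on the same identity and the same boundary-condition obstruction; the paper's variant is lighter, since it only needs the weak form of unique continuation from an open set for smooth second-order elliptic operators, whereas yours requires the stronger nodal-set-of-measure-zero statement that you rightly flag as the delicate ingredient (it does hold for smooth metrics, by Aronszajn together with the density-point argument for smooth functions, but it is heavier machinery). So the obstacle you anticipate is real but avoidable: localize on the open set where $u\neq 1$ rather than on the complement of the nodal set of $v$.
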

\begin{proof}
Let $(u,g)$ satisfy $\mathcal{E}^{\lambda}(u,g)=(\Delta_g+\lambda)u=\lambda$, then at the point $(u,g)$, we have 
\begin{equation*}
    D\mathcal{E}^{\lambda}(v,h)=(\Delta_g+\lambda)v+D_2\mathcal{E}^{\lambda}(h).
\end{equation*}
Now we need to verify the surjectivity of this map. 
%For any element $y\in Im(\Delta_g+\lambda)$, there is nothing we need to do. Just take the preimage of $y$.
If $y\in Im(\Delta_g+\lambda)^{\perp}$, then $y$ is a weak solution of $(\Delta_g+\lambda)y=0$, and $y$ is smooth. Let us assume that $y$ is orthogonal to $D_2\mathcal{E}^{\lambda}(rg)$. Then according to \cref{eq:vanishing condition}, we obtain that:
\begin{equation*}
    (\lambda u-\lambda)y=0.
\end{equation*}
First, we claim that $u$ cannot be a constant. Assume that $u$ is a constant function, $\Delta_gu=0$ and $(\Delta_g+\lambda)u=\lambda$ gives that $u=1$. But this does not satisfy the boundary condition. Hence, $u$ cannot be a constant. In particular, $u\not\equiv1$. Now we obtain that $\lambda u-\lambda\not\equiv0$. If $\lambda u-\lambda\neq 0$ at $x_0$, there exists a open neighbourhood $N$ such that $\lambda u-\lambda\neq 0$ in $N$. Then $y\equiv0$ in $N$. Hence, we know that $y$ vanishes in a subdomain of $\Omega$. Then by the unique continuation property, we know $y=0$ in $\Omega$. This leads to the surjectivity of the map $D\mathcal{E}^{\lambda}$, which means that $\lambda\in\sigma(-\Delta_{g_0})$ is a regular value of $\mathcal{E}^{\lambda}$.
\end{proof}
Now we can deduce that the set $G^{\lambda}=\{g\in\mathcal{M}:\lambda\notin\sigma(\Delta_g)\}$ is residual in $\mathcal{M}$. 
\begin{prop}
In dimension $2$, suppose that we fix one metric $g_0$ and the associated  Laplacian $\Delta_{g_0}$ with its spectrum $\sigma(\Delta_{g_0})$. Then the set $\G_{uc}=\{g\in\mathcal{M}:\sigma(\Delta_g)\cap\sigma(\Delta_{g_0})=\emptyset\}$ is residual in $\mathcal{M}$.
\end{prop}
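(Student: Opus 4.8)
The plan is to realise $\mathcal{G}_{uc}$ as a countable intersection of the residual sets $G^{\lambda}$ produced above and then invoke the Baire property of $\mathcal{M}$.

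First I would note that, $\Omega$ being bounded, the Dirichlet realisation of $-\Delta_{g_0}$ has compact resolvent, so its spectrum $\sigma(\Delta_{g_0})$ is a discrete set of positive eigenvalues; enumerate it as $\{\lambda_j\}_{j\in\N}$, a countable family (which, incidentally, avoids $0$, the case excluded in the regularity argument of the previous lemma). By the last two lemmas, for every $j$ the value $\lambda_j\in\sigma(\Delta_{g_0})$ is a regular value of $\mathcal{E}^{\lambda_j}$ in dimension $2$, and hence, via the Transversality Theorem, the set $G^{\lambda_j}=\{g\in\mathcal{M}:\lambda_j\notin\sigma(\Delta_g)\}$ is residual in $\mathcal{M}$.

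Next I would record the set-theoretic identity
\begin{equation*}
\mathcal{G}_{uc}=\{g\in\mathcal{M}:\sigma(\Delta_g)\cap\sigma(\Delta_{g_0})=\emptyset\}=\bigcap_{j\in\N}G^{\lambda_j},
\end{equation*}
which holds because $\sigma(\Delta_g)$ meets $\sigma(\Delta_{g_0})$ precisely when some $\lambda_j$ belongs to $\sigma(\Delta_g)$. Finally, $\mathcal{M}$ is an open subset of the Fréchet space $C^{\infty}(\Omega,T^*\Omega\otimes T^*\Omega)$, hence completely metrisable and therefore a Baire space; a countable intersection of residual subsets of a Baire space is again residual (each $G^{\lambda_j}$ contains a countable intersection of dense open sets, and the union over $j$ of these countable families is still countable). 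Therefore $\mathcal{G}_{uc}$ is residual in $\mathcal{M}$, which is the assertion.

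There is no genuinely hard step left once the single-eigenvalue statement is in hand; the points that require care are: the discreteness, hence countability, of $\sigma(\Delta_{g_0})$ (this uses compactness of the resolvent and is exactly where boundedness of $\Omega$ enters); the fact that $\mathcal{M}$ is a Baire space, so that residuality is stable under countable intersections; and, implicitly, that the residuality of each $G^{\lambda_j}$ rests on the surjectivity computation of the preceding lemma, whose conformal-variation identity $D_2\mathcal{E}^{\lambda}(rg)=-r\Delta_g u$ is special to dimension $2$.
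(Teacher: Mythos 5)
Your proposal is correct and follows essentially the same route as the paper: the paper likewise writes $\G_{uc}=\bigcap_{\lambda\in\sigma(\Delta_{g_0})}G^{\lambda}$, uses the discreteness (hence countability) of the spectrum, and concludes by stability of residuality under countable intersections. Your added remarks on the compact resolvent and the Baire property of $\mathcal{M}$ are harmless elaborations of points the paper leaves implicit.
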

\begin{proof}
Define:
\begin{equation*}
    \G_{uc}=\cap_{\lambda\in\sigma(\Delta_{g_0})}G^{\lambda}.
\end{equation*}
$G$ is a intersection of countably many residual sets, so it is still residual in $\mathcal{M}$. And for any metric $g\in \G_{uc}$, $\sigma(\Delta_g)\cap\sigma(\Delta_{g_0})=\emptyset$. Assume that $\lambda_0\in\sigma(\Delta_g)\cap\sigma(\Delta_{g_0})$, which gives that $g\notin G^{\lambda_0}$. That contradicts to the fact that $g\in \G_{uc}=\cap_{\lambda\in\sigma(\Delta)}G^{\lambda}$. Hence, for fixed Laplacian $\Delta$ with its spectrum $\sigma(\Delta_{g_0})$, the set $\{g\in\mathcal{M}:\sigma(\Delta_g)\cap\sigma(\Delta_{g_0})=\emptyset\}$ is residual in $\mathcal{M}$.
\end{proof}
\begin{coro}
In dimension $2$, fix the canonical Laplace operator $\Delta$, for every metric $g\in\G_{uc}$, the system
\begin{equation*}
\left\{
\begin{array}{cc}
     \Delta u_1=-\lambda^2u_1,  \\
     \Delta_gu_2=-\lambda^2u_2,\\
     u_1+u_2=0 \text{ in }\omega\subset\Omega,\\
     u_1,u_2\in H^1_0(\Omega),
\end{array}
\right.
\end{equation*} 
has only trivial solution $u_1=u_2=0$.
\end{coro}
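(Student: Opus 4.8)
The plan is to imitate verbatim the argument already used for the one-dimensional corollary following the residuality proposition, the only additional ingredient being the unique continuation property for second order elliptic operators in dimension two. First I would unwind the definition of $\G_{uc}$: for $g\in\G_{uc}$ one has $\sigma(\Delta_g)\cap\sigma(\Delta)=\emptyset$, so the two Laplacians share no eigenvalue. Then I would fix a solution $(u_1,u_2)$ of the coupled system, with common eigenvalue $\lambda^2$, and split into cases according to whether $u_1$ vanishes identically.

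If $u_1\equiv0$, the coupling relation $u_1+u_2=0$ in $\omega$ forces $u_2=0$ on the nonempty open set $\omega$; since $u_2\in H^1_0(\Omega)$ solves $(\Delta_g+\lambda^2)u_2=0$, an elliptic equation with smooth coefficients on the connected domain $\Omega$, the unique continuation property from an open subset yields $u_2\equiv0$. The symmetric reasoning disposes of the case $u_2\equiv0$. In the remaining case $u_1\not\equiv0$, unique continuation for $\Delta$ (whose eigenfunctions are real-analytic, hence a fortiori enjoy the unique continuation property) prevents $u_1$ from vanishing on the open set $\omega$; therefore $u_1|_\omega\neq0$, and by $u_2=-u_1$ in $\omega$ we get $u_2|_\omega\neq0$, so $u_2\not\equiv0$ as well. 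Thus both $u_1$ and $u_2$ are genuine eigenfunctions with the same eigenvalue $\lambda^2$, whence $\lambda^2\in\sigma(\Delta)\cap\sigma(\Delta_g)=\emptyset$, a contradiction. Consequently the only solution is $u_1=u_2=0$.

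The only point that requires a word of justification — and hence the ``hard part'', though it is standard — is the unique continuation statement in dimension two: a solution of $(\Delta_g+\lambda^2)u=0$ that vanishes on a nonempty open subset of the connected set $\Omega$ must vanish identically. For the canonical Laplacian $\Delta$ this is immediate from analytic hypoellipticity; for $\Delta_g$ with $g$ merely smooth it follows from Aronszajn's unique continuation theorem for second order elliptic operators with Lipschitz principal part. Everything else is a literal repetition of the one-dimensional argument, so no genuine new obstacle arises.
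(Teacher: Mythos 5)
Your proposal is correct and follows essentially the same route as the paper: the paper proves the analogous one-dimensional corollary by exactly this case analysis (if one function vanishes identically, unique continuation from $\omega$ kills the other; otherwise both are genuine eigenfunctions sharing $\lambda^2$, contradicting $\sigma(\Delta_g)\cap\sigma(\Delta)=\emptyset$), and the two-dimensional corollary is intended as the verbatim analogue. Your explicit invocation of Aronszajn-type unique continuation for $\Delta_g$ with smooth coefficients is precisely the ingredient the paper leaves implicit, so there is no gap.
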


\section{Constant Coefficient Case with Multiple Control Functions}
\label{sec:Constant Coefficient Case with Multiple Control Functions}
In this section, we prove \cref{thm:mutli thm}. First we study the information given by the Kalman rank condition. Without loss of generality, we assume that the diagonal matrix $D$ has the form $D=\left(
\begin{array}{ccc}
     d_1Id_{n_1}& & \\
      &\ddots& \\
      & &d_sId_{n_s}
\end{array}
\right)$, where $\sum_{1\leq i\leq s}n_i=n$ and $d_i(1\leq i\leq s)$ are all distinct. And we can always rearrange the lines of the system \cref{eq:multi control system} to ensure that this property is verified:
\begin{equation*}
    \left\{
    \begin{aligned}
        (\partial_t^2-d_1\Delta)U_1&=B_1F\mathbf{1}_{]0,T[}(t)\mathbf{1}_{\omega}(x)\text{ in }]0,T[\times\Omega,\\
        &\vdots\\
        (\partial_t^2-d_s\Delta)U_s&=B_s F\mathbf{1}_{]0,T[}(t)\mathbf{1}_{\omega}(x)\text{ in }]0,T[\times\Omega,\\
    \end{aligned}
    \right.
\end{equation*}
for every $1\leq i\leq s$, where $U_i=\left(
\begin{array}{c}
     u^i_1 \\
     \vdots\\
     u^i_{n_i}
\end{array}
\right)$ and $B_i=\left(
            \begin{array}{ccc}
              b^i_{11} & \cdots & b^i_{1m} \\
              \vdots  & \ddots & \vdots \\
              b^i_{n_i1} & \cdots & b^i_{n_im}  \\
           \end{array}
          \right)$ is a matrix of size $n_i\times m$.
\begin{prop}\label{thm:multi Kalman condition}
$(D,B)$ satisfies the Kalman rank condition if and only if \\$rank(B_i)=n_i\leq m$.
\end{prop}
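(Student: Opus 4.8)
The plan is to translate the Kalman rank condition into a statement about the left kernel of $[D|B]$ and then exploit the block structure of $D$. Since $[D|B]=[D^{n-1}B|\cdots|DB|B]$ is an $n\times nm$ matrix, the condition $\operatorname{rank}[D|B]=n$ is equivalent to full row rank, i.e. to the implication: if $x\in\C^n$ is a row vector with $x^tD^kB=0$ for all $0\le k\le n-1$, then $x=0$. First I would decompose $x=(x_1,\dots,x_s)$ along $\C^n=\C^{n_1}\oplus\cdots\oplus\C^{n_s}$ in accordance with the normal form of $D$, and observe that, because $D^k$ acts on the $i$-th block as the scalar $d_i^k$, one has $x^tD^kB=\sum_{i=1}^{s}d_i^k\,(x_i^tB_i)$. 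Thus the whole question reduces to: when does $\sum_{i=1}^s d_i^k\,(x_i^tB_i)=0$ for $k=0,\dots,n-1$ force every $x_i$ to vanish?

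For the ``if'' direction, assuming $\operatorname{rank}(B_i)=n_i$ for each $i$ (which automatically forces $n_i\le m$, since $B_i$ has only $m$ columns), I would set $v_i=x_i^tB_i$ and note that $\sum_i d_i^k v_i=0$ in particular for $k=0,\dots,s-1$ — these indices are available since $n=\sum_i n_i\ge s$. The $s\times s$ matrix $(d_i^k)_{0\le k\le s-1,\ 1\le i\le s}$ is Vandermonde, hence invertible because the $d_i$ are pairwise distinct, so $v_i=x_i^tB_i=0$ for every $i$; full row rank of $B_i$ then gives $x_i=0$, whence $x=0$ and the Kalman rank condition holds. For the ``only if'' direction I would argue by contraposition: if $\operatorname{rank}(B_i)<n_i$ for some $i$ (which includes the degenerate case $n_i>m$, where full row rank is impossible), pick $0\ne x_i\in\C^{n_i}$ with $x_i^tB_i=0$ and embed it as the $i$-th block of an otherwise-zero $x\in\C^n$; then $x^tD^kB=d_i^k\,(x_i^tB_i)=0$ for all $k$, so $x$ is a nonzero vector in the left kernel of $[D|B]$ and the rank condition fails.

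The only genuinely non-formal ingredient is the invertibility of the Vandermonde matrix in the ``if'' direction, which is precisely where the distinctness of the $d_i$ (encoded in the normal form of $D$) enters; the rest is linear-algebra bookkeeping. I do not expect a real obstacle — one could equally phrase the argument via the Hautus/PBH eigenvector test, checking $\operatorname{rank}[D-d_iI\,|\,B]=n$ at each eigenvalue $d_i$ of $D$, which again reduces to $B_i$ having full row rank — so the main care needed is simply to keep the block indexing of $B$ into the $B_i$ consistent with the block structure of $D$.
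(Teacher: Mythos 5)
Your argument is correct, and it takes a genuinely different route from the paper. The paper works directly on the matrix $[D|B]$: it factors each block as $P_iB_iQ_i=\left(\begin{smallmatrix}\mathrm{Id}_{r_i}&0\\0&0\end{smallmatrix}\right)$ and then performs explicit block column eliminations (with pivots built from the nonzero differences $\tfrac{1}{d_i}-\tfrac{1}{d_j}$) to reach a block lower-triangular form, which yields the sharper identity $\mathrm{rank}[D|B]=\sum_{i=1}^{s}\mathrm{rank}(B_i)$, from which the equivalence is immediate. You instead dualize: full row rank of $[D|B]$ is the triviality of its left kernel, the block structure of $D$ turns $x^tD^kB=0$ into $\sum_i d_i^k(x_i^tB_i)=0$, and a Vandermonde argument in the powers $k=0,\dots,s-1$ (valid because the $d_i$ are pairwise distinct and $n\ge s$) gives $x_i^tB_i=0$, hence $x_i=0$ when $B_i$ has full row rank; the converse is the obvious construction of a nonzero left-kernel vector from a row dependency in some $B_i$. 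Your proof is shorter, isolates exactly where distinctness of the speeds enters, and in fact needs only $d_i\neq d_j$ rather than the invertibility of the $d_i$ that the paper's eliminations implicitly use (harmless here since the $d_i$ are positive); it is essentially the Hautus/PBH eigenvector test specialized to a diagonalizable $D$. What it does not give is the quantitative rank formula $\mathrm{rank}[D|B]=\sum_i\mathrm{rank}(B_i)$, which the paper's Gaussian elimination provides as a by-product, though that extra information is not needed for the proposition as stated.
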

\begin{rem}
If $m=1$, we know that $rank(B_i)=n_i\leq1$. Thus, we obtain $n_i=1$ and $B_i=b_i\neq0$. This implies that every entry of control matrix $B$ is nonzero and all speeds $d_i$ are distinct. We recover the result of Remark 1.1 in \cite{ammar2009kalman}. If $m\geq2$, we can allow some block $d_i\id_{n_i}$ is of size $n_i\times n_i$, with $n_i\geq2$. For example, take 
$D=diag(1,1,2)$ and $B=\left(
            \begin{array}{cc}
              1& 0  \\
              0 &1  \\
              1 & 0  \\
           \end{array}
          \right)$. Then we obtain $[D|B]=\left(
            \begin{array}{cccccc}
              1& 0&1&0&1&0  \\
              0 &1 &0&1&0&1 \\
              4 & 0&2&0&1&0 \\
           \end{array}
          \right)$. Hence, we know that $rank[D|B]=3$ which means that the matrix $[D|B]$ is of full rank.
\end{rem}
The proof of \cref{thm:multi Kalman condition} is given in the Appendix.

Now we can prove \cref{thm:mutli thm}. 
\begin{proof}[Proof of \cref{thm:mutli thm}]
We follow the same procedure. Applying Hilbert uniqueness method, we can estabish the observability inequality:
\begin{equation}\label{eq:multi ob inequality}
    ||V(0)||^2_{(L^2\times H^{-1})^n}\leq C\int_0^T\int_{\omega}|B^*V|^2dxdt,
\end{equation}
where $B^*$ is the adjoint form of the matrix $B$, and $V= (V_1,\cdots,V_s)^t\in\R^{n_1}\times\cdots\times\R^{n_s}=\R^n$. Then we can estabilsh a similar weak observability inequality:
\begin{equation}\label{eq:multi weak ob}
||V(0)||^2_{(L^2\times H^{-1})^n}\leq C\int_0^T\int_{\omega}|B^*V|^2dxdt+C||V(0)||^2_{(H^{-1}\times H^{-2})^n} .
\end{equation}
Then argue by contradiction. Suppose that the weak observability inequality is false, then there exists a sequence $(V^k(0))_{k\in\N}$ such that 
\begin{equation}\label{eq:multi-norm-1}
    ||V^k(0)||^2_{(L^2\times H^{-1})^n}=1,
\end{equation}
\begin{equation}
    \int_0^T\int_{\omega}|B^*V^k|^2dxdt\rightarrow0,
\end{equation}
\begin{equation}
    ||V^k(0)||^2_{(H^{-1}\times H^{-2})^n}\rightarrow0   .
\end{equation}
Hence, there are $s$ microlocal defect measures $(\mu_i)_{i=1}^s$ corresponding to $V_i$. 
\begin{equation}
\begin{aligned}
    \int_0^T\int_{\omega}|B^*V^k|^2dxdt=\int_0^T\int_{\omega}|\sum_{i=1}^sB_i^*V_i^k|^2dxdt.
\end{aligned}
\end{equation}
Since $\mu_i$ and $\mu_j$ are singular from each other, for $i\neq j$, we know by Cauchy-Schwarz inequality, 
\begin{equation}
    \sum_{i=1}^s\int_0^T\int_{\omega}|B_i^*V_i^k|^2dxdt\rightarrow0,
\end{equation}
which gives that $B_iB^*_i\mu_i|_{\omega\times]0,T[}=0$. Since $rank(B_iB^*_i)=rank(B_i)=n_i$, we know $B_iB^*_i$ is invertible. Hence we know $\mu_i|_{\omega\times]0,T[}=0$. The rest of the proof is similar to the single control case.

\end{proof}
%\begin{rem}
%We can deal with the case with more than one control functions. To be more specific, for the system:
%\begin{equation}
%    \p_t^2 U-D\Delta U=BF\mathbf{1}_{]0,T[}(t)\mathbf{1}_{\omega}(x)\text{ in }]0,T[\times\Omega,
%\end{equation}
%where $D=diag(d_1,d_2,\cdots,d_n)$, $F=\left(
%\begin{array}{cc}
%     f_1 \\
%     \vdots\\
%     f_m
%\end{array}
%\right)$, and $B=\left(
%            \begin{array}{ccc}
%              b_{11} & \cdots & b_{1m} \\
%              \vdots  & \ddots & \vdots \\
%              b_{n1} & \cdots & b_{nm}  \\
%            \end{array}
%          \right)$. We can also define the Kalman rank condition
%\end{rem}
\appendix
\section{Proof of \cref{thm:multi Kalman condition}}
\label{appendix}
\begin{proof}[Proof of \cref{thm:multi Kalman condition}]
First, we calculate the form of $[D|B]$:
\begin{equation*}
\begin{aligned}
    \left[D|B\right]=&[D^{n-1}B|\cdots|DB|B]\\
         =&\left[\begin{array}{ccc}
              d_1^{n-1}B_1 & \cdots & B_1 \\
              \vdots  & \ddots & \vdots \\
              d_s^{n-1}B_s & \cdots & B_s  \\
           \end{array}\right]
\end{aligned}
\end{equation*}
Now we define $r_i=rank(B_i)$. Thus, for each $i$, we can find invertible matrices $P_i$ of size $n_i\times n_i$ and $Q_i$ of size $m\times m$ such that $P_iB_iQ_i=\left(\begin{array}{cc}
              \id_{r_i}  & 0 \\
              0& 0\\
           \end{array}\right)\overset{def}{=}E_i$. Then define $P=diag(P_1,\cdots,P_s)$ and $Q=diag(Q_1,\cdots, Q_s)$. We know that $P$ and $Q$ are invertible. Hence, we obtain $rank[D|B]=rank(P[D|B]Q)$. Now we rewrite that
\begin{equation*}
\begin{aligned}
    P[D|B]Q=&\left[\begin{array}{ccc}
              d_1^{n-1}P_1B_1Q_1 & \cdots & P_1B_1Q_s \\
              \vdots  & \ddots & \vdots \\
              d_s^{n-1}P_sB_sQ_1 & \cdots & P_sB_sQ_s  \\
           \end{array}\right]\\
           &=\left[\begin{array}{ccc}
              d_1^{n-1}E_1 & \cdots & P_1B_1Q_s \\
              \vdots  & \ddots & \vdots \\
              d_s^{n-1}P_sB_sQ_1 & \cdots & E_s  
           \end{array}\right]
\end{aligned}
\end{equation*} 
Now, consider the general term $P_iB_iQ_j$:
\begin{equation*}
    P_iB_iQ_j=P_iB_iQ_iQ_i^{-1}Q_j=E_iQ_i^{-1}Q_j.
\end{equation*}
Hence,
\begin{equation*}
    P[D|B]Q=\left[\begin{array}{ccc}
              d_1^{n-1}E_1 & \cdots & E_1Q_1^{-1}Q_s \\
              \vdots  & \ddots & \vdots \\
              d_s^{n-1}E_sQ_s^{-1}Q_1 & \cdots & E_s  
           \end{array}\right]
\end{equation*}
Now we define the column transform $T_1$:
\begin{equation*}
    T_1=\left[\begin{array}{cccc}
              \id_{n_1} &-\frac{1}{d_1}Q_1^{-1}Q_2& \cdots & -\frac{1}{d^{n-1}_1}Q_1^{-1}Q_s \\
              0&\id_{n_2}&\cdots&0\\
              \vdots  &\vdots& \ddots & \vdots \\
              0 & 0&\cdots & \id_{n_s}  
           \end{array}\right]
\end{equation*}
It is easy to see that $T_1$ is invertible and $rank(P[D|B]Q)=rank(P[D|B]QT_1)$.
\begin{equation*}
\begin{aligned}
    &P[D|B]QT_1\\
    =&\left[\begin{array}{cccc}
              d_1^{n-1}E_1 &0& \cdots & 0 \\
              d_2^{n-1}E_2Q_2^{-1}Q_1&(\frac{d_2^{n-1}}{d_2}-\frac{d_2^{n-1}}{d_1})E_2&\cdots&(\frac{d_2^{n-1}}{d_2^{n-1}}-\frac{d_2^{n-1}}{d_1^{n-1}})E_2Q_2^{-1}Q_s\\
              \vdots  &\vdots& \ddots & \vdots \\
              d_s^{n-1}E_sQ_s^{-1}Q_1 & \cdots&\cdots & (\frac{d_s^{n-1}}{d_s^{n-1}}-\frac{d_s^{n-1}}{d_1^{n-1}})E_s  
           \end{array}\right].
\end{aligned}
\end{equation*}
Step by step, we can do the Gaussian elimination and find an invertible matrix $T$ such that:
\begin{equation*}
    P[D|B]QT=\left[\begin{array}{cccc}
              d_1^{n-1}E_1 &0& \cdots & 0 \\
              *&d_2^{n-1}(\frac{1}{d_2}-\frac{1}{d_1})E_2&\cdots&0\\
              \vdots  &\vdots& \ddots & \vdots \\
              * & *&\cdots & d_s^{n-1}\prod_{i=1}^{s-1}(\frac{1}{d_s}-\frac{1}{d_i})E_s  
           \end{array}\right].
\end{equation*}
Then $rank[D|B]=rank(P[D|B]Q)=rank(P[D|B]Q)=\sum_{i=1}^s r_i\leq \sum_{i=1}^s n_i$. Hence, $n=rank[D|B]=\sum_{i=1}^s r_i\leq\sum_{i=1}^s n_i=n$. This implies that $rank[D|B]=n\Longleftrightarrow \forall i, r_i=n_i$.
\end{proof}

\section*{Acknowledgments}
This work is finished under the guidance of Nicolas Burq and Pierre Lissy, to whom the author owes great gratitude. The author would also express gratitude to Hui Zhu for several interesting discussions and comments.

\bibliographystyle{plain}
\bibliography{bib}

\end{document}